\newtheorem{lemma}{Lemma}[section]
\newtheorem{theorem}[lemma]{Theorem}
\newtheorem{claim}{Claim}
\newtheorem{corollary}[lemma]{Corollary}
\newtheorem{conjecture}[lemma]{Question}
\newtheorem{proposition}[lemma]{Proposition}
\theoremstyle{definition}
\newtheorem{definition}[lemma]{Definition}
\theoremstyle{remark}
\newtheorem{remark}[lemma]{Remark}
\newtheorem{example}[lemma]{Example}
\numberwithin{equation}{section}
\title{Recognizability for sequences of morphisms}
\author[V.~Berth\'e]{Val\'erie Berth\'e}
\author[W.~Steiner]{Wolfgang Steiner}
\address{IRIF, CNRS UMR 8243, Universit\'e Paris Diderot -- Paris 7, Case 7014, 75205 Paris Cedex 13, FRANCE}
\email{berthe@irif.fr, steiner@irif.fr}
\author[J. M. Thuswaldner]{J\"org M. Thuswaldner}
\address{Chair of Mathematics and Statistics, University of Leoben, A-8700 Leoben, AUSTRIA}
\email{joerg.thuswaldner@unileoben.ac.at}
\author[R.~Yassawi]{Reem Yassawi}
\address{ ICJ, CNRS UMR 5208, Universit\'e Claude Bernard Lyon 1, F-69622 Villeurbanne Cedex, FRANCE
}
\email{ryassawi@gmail.com}
\thanks{This work was supported by the Agence Nationale de la Recherche and the Austrian Science Fund through the projects ``Fractals and Numeration'' (ANR-12-IS01-0002, FWF I1136), ``Dyna3S'' (ANR-13-BS02-0003),  ``Discrete Mathematics'' (FWF W1230), and ``Fractals and Words: Topological, Dynamical, and Combinatorial Aspects'' (FWF P27050). It also  received funding from the European Research Council (ERC) under the European Union's
Horizon 2020 research and innovation programme,  Grant
Agreement No 648132.}
\date{\today}
\keywords{substitutions; monoid morphisms; $S$-adic shifts; recognizability.}
\subjclass[2010]{37B10, 05A05}
\begin{document}

\begin{abstract}
We investigate different notions of recognizability for a free monoid morphism $\sigma: \mathcal{A}^* \to \mathcal{B}^*$.
Full recognizability occurs when each (aperiodic) point in $\mathcal{B}^\mathbb{Z}$ admits at most one tiling with words $\sigma(a)$, $a \in \mathcal{A}$. 
This is stronger than the classical notion of recognizability of a substitution $\sigma: \mathcal{A}^*\to\mathcal{A}^*$, where the tiling must be compatible with the language of the substitution.
We show that if  $|\mathcal A|=2$, or if  $\sigma$'s incidence matrix has  rank $|\mathcal A|$, or if $\sigma$ is permutative, then $\sigma$ is fully recognizable. 
Next we investigate the classical notion of recognizability and  improve earlier results of Moss\'{e} (1992) and Bezuglyi, Kwiatkowski and Medynets (2009), by showing that any substitution is recognizable for aperiodic points in its substitutive shift.
Finally we define  recognizability and also eventual recognizability for sequences of morphisms which define an $S$-adic shift.  We prove that a sequence of morphisms on alphabets of bounded size, such that compositions of consecutive morphisms are growing on all letters, is eventually recognizable for aperiodic points. We provide examples of eventually recognizable, but not recognizable, sequences of morphisms, and sequences of morphisms which are not eventually recognizable.  As an application, for a  recognizable sequence of morphisms, we obtain an almost everywhere bijective correspondence between the $S$-adic shift it generates,  and the measurable Bratteli-Vershik dynamical system that it defines.
\end{abstract} 
 
\maketitle

\section{Introduction}
Given a substitution $\sigma:\, \mathcal{A} \to \mathcal{A}^+$, and a long enough word~$w$ in the language generated by~$\sigma$, \emph{recognizability} is a form of injectivity of~$\sigma$ that allows one to uniquely \emph{desubstitute} most of~$w$ to another word~$v$, i.e.,  express $w$ as a concatenation of substitution words dictated by the letters in~$v$, with $v$ traditionally required to be in the substitution's language. 
Martin \cite{Martin:71} defined a notion of recognizability, and established it for a family of substitutions on a two letter alphabet.
Moss\'{e} \cite{Mosse:92, Mosse:96} showed that primitive aperiodic substitutions are recognizable, working with a combinatorial definition of recognizability; see Section~\ref{sec:defdef}  for definitions of these and other basic terms.
In \cite{Bezugly:2009}, Bezuglyi, Kwiatkowski and Medynets extended recognizability to all substitutive shifts that contain no shift-periodic points; their proof  differs essentially from Moss\'{e}'s.
Recognizability has also been widely studied in word combinatorics, under the term \emph{circularity}, in the context of D$0$L-systems, see e.g.\ \cite{Cassaigne:94,KlouSta,MigSee:93},  and  in the study of self similar tilings, where it is called the \emph{unique composition property}. 
Solomyak \cite{Solomyak:98} uses methods similar to Moss\'{e}'s to show that a  translationally finite self-affine tiling of~$\mathbb{R}^d$ has the unique composition property if and only if it is not periodic.

For substitutions, the combinatorial formulation of recognizability that Moss\'{e} uses is strongly related to the requirement above that a point be ``uniquely'' desubstitutable; see Definition~\ref{def:recog} and Remark~\ref{rem:sigma}.
Since a bi-infinite point will have infinitely many desubstitutions, uniqueness is to be understood modulo shift orbits, i.e., recognizability occurs as long as all possible desubstitutions of a point are obtained using only the shift orbit of a single point. In a dynamical context, this form of recognizability implies the presence of a natural sequence of refining (Kakutani-Rohlin)  partitions which are used extensively in key works on substitutive dynamical systems; see also
Section~\ref{sec:bratt-versh-repr}.
Before the work of Moss\'{e}, authors assumed their substitutions were recognizable. For example Host \cite{Host:1986} assumed recognizability to characterize the eigenvalues of a primitive recognizable substitutive shift,   and similarly, Anderson and Putnam  \cite{AndersonPutnam:98} assume the unique composition property  to show that the dynamics of the substitution  on the space of tilings is topologically conjugate to a shift on a stationary inverse limit.

In Definitions~\ref{def:recog} and~\ref{recognizable-morphism-sequences},  we extend the notion of recognizability to a more general setting. While we still require unique desubstitutability, we change, or extend, the set of points to which we can desubstitute.
 
We investigate establishing recognizability in the following contexts.
In the first, we ask when a single morphism $\sigma:\, \mathcal{A} \to \mathcal{B}^+$ is recognizable in~$\mathcal{A}^\mathbb{Z}$, and not only with respect to the closure of the shift orbit of one point; see Section~\ref{sec:recogn-with-resp}. 
More precisely, given a morphism and a point $y\in\mathcal{B}^\mathbb{Z}$, when does $y$ have a unique desubstitution $y = T^k \sigma(x)$ with $T$ the shift and $x \in \mathcal{A}^\mathbb{Z}$? 
A~question of this nature was recently discussed by B\'{e}daride, Hubert and Leplaideur in \cite{BedHubLep:15} (see also \cite{Emme:16})  where they study the desubstitutions of  points in a  substitutive shift by points in the full shift.
The authors were led to this question in the framework of thermodynamic formalism and Ruelle renormalization operators for potentials.

The second context, which is in fact where we began our investigations, concerns the recognizability of a sequence of free group morphisms $\boldsymbol{\sigma} = (\sigma_n)_{n\geq 0}$ with $\sigma_n:\, \mathcal{A}_{n+1} \to \mathcal{A}_n^+$. 
Such a sequence defines an \emph{$S$-adic} shift, generated by iterations of the form $\sigma_0 \circ \sigma_1 \circ \cdots \circ \sigma_n$; see Section~\ref{definition-of-S-adic-shift} for the  definition. 
If $\sigma_n = \sigma_0$ for all $n\geq 0$, then we are in the classical \emph{stationary} case of a substitutive shift. 
In fact, $\boldsymbol{\sigma}$~defines a sequence $(X_{\boldsymbol{\sigma}}^{(n)},T)_{n\geq 0}$ of shift spaces, and here, by recognizability of~$\sigma_n$, we mean that any point in $(X_{\boldsymbol{\sigma}}^{(n)},T)$ can be  desubstituted in a  unique way using~$\sigma_n$ and points in~$X_{\boldsymbol{\sigma}}^{(n+1)}$. 
We distinguish between recognizability of~$\boldsymbol{\sigma}$, where each $\sigma_n$ is recognizable, and \emph{eventual} recognizability, where all but finitely many  morphisms~$\sigma_n$ are recognizable. 
In Section~\ref{examples}, we give examples of $S$-adic shifts that are eventually recognizable but not recognizable, as well as examples of non-eventually recognizable $S$-adic shifts.

We have three main results. 
The first is Theorem~\ref{t:recognizable}, which concerns the recognizability of a single  morphism $\sigma:\, \mathcal{A} \to \mathcal{B}^+$. In particular, we show, for a large family of morphisms and for any $x \in \mathcal{A}^\mathbb{Z}$, that if $\sigma(x)$ is not shift-periodic, then $\sigma(x)$ essentially has no other desubstitutions than those involving~$x$.
In particular, if $|\mathcal{A}| = 2$, or the rank of $\sigma$'s incidence matrix is~$|\mathcal{A}|$, or $\sigma$ is \emph{rotationally conjugate} to a left (or right) {\em permutative} morphism, then our results apply. 
Here, a~morphism $\sigma:\, \mathcal{A} \to \mathcal{B}^+$ is \emph{left (right) permutative} if the first (last) letters of $\sigma(a)$ and $\sigma(b)$ are different for all distinct $a,b \in \mathcal{A}$. 
This latter condition generalizes one in \cite{BedHubLep:15}, where the authors assume that a substitution is both left and right permutative (also called \emph{marked}) to obtain a similar result. 
As the incidence matrix of any \emph{$k$-bonacci} substitution has full rank, our results include those of Emme concerning unique desubstitutions in \cite{Emme:16}.
Theorem~\ref{t:recognizable} leads to Theorem~\ref{c:rec}, where we identify a large class of sequences of morphisms that are recognizable for aperiodic points, namely sequences where each morphism is fully recognizable for aperiodic points: morphisms defined on a two-letter alphabet, morphisms with full rank, and morphisms being rotationally conjugate to a left (or right) permutative morphism.

Our second principal result is Theorem~\ref{t:evrec}. 
There we assume that we work with $S$-adic shifts where the sequence $\boldsymbol{\sigma} = (\sigma_n)_{n\geq 0}$ of morphisms is defined on alphabets of bounded size and $\boldsymbol{\sigma}$ is \emph{everywhere growing}, i.e., the length of the words $\sigma_0 \circ \sigma_1 \circ \cdots \circ \sigma_n(a)$ tends to~$\infty$ for all $a \in \mathcal{A}_{n+1}$ as $n$ tends to infinity. 
With these assumptions, we show that $\boldsymbol{\sigma}$ is eventually recognizable for aperiodic points. 
We can relax the assumption of bounded alphabets to $\liminf_{n\to\infty} |\mathcal{A}_n| < \infty$, and replace the everywhere growing condition by assuming that the points in each~$X_{\boldsymbol{\sigma}}^{(n)}$  generate a bounded number of languages. This holds in particular for minimal shifts $(X_{\boldsymbol{\sigma}}^{(n)}, T)$, as in this case all points in $X_{\boldsymbol{\sigma}}^{(n)}$ define the same language.
We also give a bound on the level~$n$ after which $\boldsymbol{\sigma}$ is recognizable, in terms of the size of the alphabets and the number of different sets of subwords.

A~slight modification of Theorem~\ref{t:evrec} gives us our third main result, Theorem~\ref{t:substrec}, where we show that each substitution is (classically) recognizable for aperiodic points in its substitutive shift. 
This extends the results of \cite{Bezugly:2009}, where aperiodicity of the whole shift was required. 

Our proofs are inspired by the result of Downarowicz and Maass in \cite{Down:2008}, where it is shown that Bratteli-Vershik topological dynamical systems are either expansive or conjugate to an odometer. 
In \cite{Bezugly:2009}, Bezuglyi, Kwiatkowski and Medynets had already harvested \cite{Down:2008} to establish recognizability for aperiodic substitutive shifts. 
Our results can thus be seen as an extension of these two works; we discuss in Section~\ref{main_S_adic-2} the challenges we have to overcome. 
We mention here that Moss\'{e}'s proof heavily depends on stationarity, and the fact that primitive substitution languages are \emph{power free}, and we have neither of these properties in our non-stationary setting. 
We also note that  Crabb, Duncan and McGregor \cite{Crabb:10} have recently revisited the (flawed) recognizability approach of Martin \cite{Martin:71}  for a two letter alphabet, and 
their proof of recognizability can be generalized to obtain recognizability of infinite $S$-adic shifts on a two letter alphabet.

We came to these question of recognizability for $S$-adic shifts for the following reason.
Under mild conditions, if $\boldsymbol{\sigma}$ is recognizable, then $(X_{\boldsymbol{\sigma}},T)$ is \emph{almost conjugate} to the \emph{natural Bratteli-Vershik} system $(X_B,\varphi)$ associated with~$\boldsymbol{\sigma}$  (Theorem~\ref{thm:Bratteli-Vershik}; see Section~\ref{sec:bratt-versh-repr} for all definitions).
The latter is not a topological dynamical system, but one that is defined on a set of full mass for any  appropriate probability measure on~$X_B$. By an almost conjugacy $\Phi$ of $(X,T)$ and $(Y,S)$, we mean that $\Phi$ is a measurable conjugacy when $(X,T)$ and $(Y,S)$ are equipped with any invariant fully supported probability measure.
The principal requirement is that our $S$-adic system has a generating sequence of partitions, and recognizability gives us this.
A~correspondence between the $S$-adic shift and the natural Bratteli-Vershik system that it defines has been established for primitive substitutive shifts by Livshits and Vershik in \cite{Livshits-Vershik}, and Canterini and Siegel in \cite{CS01b,CanSie:2001}.
In some cases, namely when the substitution is \emph{proper},  the almost conjugacy is in fact topological \cite{Durand-Host-Skau}. 
However our approach (developed in Section~\ref{sec:bratt-versh-repr}), like in \cite{Livshits-Vershik} and  \cite{CS01b,CanSie:2001},  is to drop the requirement that the Vershik map be everywhere continuous.
One exception is Theorem~\ref{tree-topological-BV-rep}, where we prove that for minimal \emph{tree shifts} $(X,T)$, the  natural Bratteli-Vershik system $(X_B,\varphi)$ associated with a \emph{return time} $S$-adic representation of $(X,T)$ is topologically conjugate to $(X,T)$.
Finally we remark that Durand and Leroy \cite{Durand-Leroy:2012} show that minimal shifts, whose complexity difference function is at most two, have $S$-adic representations which are topologically conjugate to a mild modification of the natural Bratteli-Vershik system that they define.

As applications of recognizability, let us quote estimates on the number of invariant measures,  on the rank, characterization of  spectral eigenvalues, applications to automorphism groups \cite{DonDurMassPet} and to the Sch\"utzenberger group of minimal substitutive shifts \cite{AlmCost}, and in the context of tiling spaces and related aperiodicity issues, see also  \cite{Solomyak:98,HolRadSad,FrankSadun,AndressRobinson}.

We  briefly describe the contents of the paper. 
In Section~\ref{sec:def}, we provide basic definitions and investigate the relations between the  dynamical version of recognizability and the combinatorial one.
In~Section \ref{sec:recogn-with-resp}, we provide sufficient conditions for full recognizability of a morphism.
In Sections~\ref{main_S_adic} and~\ref{main_S_adic-2} we describe our results on $S$-adic shifts, and we stress the difference between eventual recognizability and recognizability in Section~\ref{examples}.
Finally in Section~\ref{sec:bratt-versh-repr}, we discuss the implications of recognizability in terms of an $S$-adic shift's natural representation as a Bratteli-Vershik system. 

\section{Recognizability} \label{sec:def}

\subsection{Definitions of recognizability}\label{sec:defdef}
Let $\mathcal{A}$ be a finite alphabet. Then $\mathcal{A}^*$ is the free monoid of all (finite) words over $\mathcal{A}$ under the operation of concatenation, $\mathcal{A}^+$ the set of all non-empty words over $\mathcal{A}$, moreover, $\mathcal{A}^\mathbb{N}$ and $\mathcal{A}^\mathbb{Z}$ denote the one-sided and two-sided infinite sequences over~$\mathcal{A}$, respectively. Let $\mathcal{A},\, \mathcal{B}$ be finite alphabets and let $\sigma:\, \mathcal{A}^*\to \mathcal{B}^*$ be a \emph{non-erasing} morphism (also called a \emph{substitution} if $\mathcal{A} = \mathcal{B}$). By non-erasing, we mean that the image of any letter is a non-empty word. We will abuse notation and write  $\sigma:\, \mathcal{A} \to \mathcal{B}^+$.  We stress the fact that all morphisms are assumed to be non-erasing in the following.
Using concatenation, we extend $\sigma$ to~$\mathcal{A}^\mathbb{N}$ and~$\mathcal{A}^\mathbb{Z}$. 
The \emph{incidence matrix} of the morphism $\sigma$ is the $|\mathcal{B}| \times |\mathcal{A}|$ matrix~$M_\sigma=(m_{ij})$ with $m_{i,j}$ being the number of occurrences of~$i$ in $\sigma(j)$. 
Its rank is denoted by $\mathrm{rk}(M_\sigma)$; we stress the fact that $M_\sigma$ is not necessarily a square matrix.
We define the \emph{total length} of~$\sigma$ as $\interleave\sigma\interleave = \sum_{a\in\mathcal{A}} |\sigma(a)|$, where $|w|$ denotes the length of a finite word~$w$.

We use letters $x,y,z$ to denote points in two-sided shift spaces $\mathcal{A}^\mathbb{Z}$. We equip the latter with the metrizable product topology, where $\mathcal{A}$ is endowed with the discrete topology.   
The \emph{language}~$\mathcal{L}_x$ of $x = (x_n)_{n\in \mathbb Z} \in \mathcal{A}^\mathbb{Z}$ is the set of all its \emph{subwords} (or factors) $x_{[i,j)}$, $i \le j$, with $x_{[i,j)} = x_i x_{i+1}\cdots x_{j-1}$. 

Let $T: \mathcal{A}^\mathbb{Z}\rightarrow \mathcal{A}^\mathbb{Z}$ denote the (two-sided) left-shift map $(x_n)_{n\in\mathbb{Z}} \mapsto (x_{n+1})_{n\in\mathbb{Z}}$. 
Then $x \in \mathcal{A}^\mathbb{Z}$ is \emph{periodic} if $T^k(x) = x$ for some $k \ge 1$, \emph{aperiodic} otherwise.
Recall that a \emph{shift} $(X,T)$ is a dynamical system with $X$ a closed, shift-invariant subset of $\mathcal{A}^\mathbb{Z}$.
A~shift $(X,T)$ is said to be \emph{aperiodic} if each $x \in X$ is aperiodic.
A~shift $(X,T)$ is \emph{minimal} if $X$ and the empty set are the only shift-invariant closed subsets of~$X$. 
The language~$\mathcal{L}_X$ of a shift $(X,T)$ is the union of the languages of all $x \in X$. 
On the other hand, a language~$\mathcal{L}$ on~$\mathcal{A}$ which is closed under the taking of subwords defines a shift $(X_\mathcal{L}, T)$, where $X_\mathcal{L}$ is the set of points all of whose subwords belong to~$\mathcal{L}$. 
Examples of such languages are languages generated by substitutions.
Given a substitution $\sigma:\mathcal{A} \rightarrow \mathcal{A}^+$, the \emph{language}~$\mathcal{L}_{\sigma}$ defined by~$\sigma$ is
\[
\mathcal{L}_{\sigma} = \big\{w \in \mathcal{A}^*:\, \mbox{$w$ is a subword of $\sigma^n(a)$ for some  $a\in\mathcal{A}$ and $n\in \mathbb N$}\big\}.
\]
In this case we  write $X_\sigma = X_{\mathcal{L}_\sigma}$, and call $(X_\sigma, T)$ a \emph{substitutive shift}. We say that a substitution is \emph{aperiodic} if the substitutive shift that it defines is aperiodic.

We now give a ``dynamic'' definition of recognizability for morphisms.

\begin{definition}[$\sigma$-representations and  dynamic recognizability] \label{def:recog}
Let $\sigma:\, \mathcal{A} \to \mathcal{B}^+$ be a morphism and $y \in \mathcal{B}^\mathbb{Z}$.
If $y = T^k \sigma(x)$ with $x=(x_n)_{n\in \mathbb Z}
\in \mathcal{A}^\mathbb{Z}$, $k \in \mathbb{Z}$, then we say that $(k,x)$ is a \emph{$\sigma$-representation} of~$y$. 
If moreover $0 \leq k < |\sigma(x_0)|$, then $(k,x)$ is a \emph{centered $\sigma$-representation} of~$y$. 
For $X \subseteq \mathcal{A}^\mathbb{Z}$, we say that the $\sigma$-representation $(k,x)$ \emph{is in~$X$} if $x \in X$.

Given $X \subseteq \mathcal{A}^\mathbb{Z}$ and $\sigma:\, \mathcal{A} \to \mathcal{B}^+$, we say that $\sigma$ is \emph{recognizable in~$X$} if each $y \in \mathcal{B}^\mathbb{Z}$ has at most one centered $\sigma$-representation in~$X$. 
If any aperiodic point $y \in \mathcal{B}^\mathbb{Z}$ has at most one centered $\sigma$-representation in~$X$, we say that $\sigma$ is \emph{recognizable in~$X$ for aperiodic points}. 
If $\sigma$ is recognizable in~$\mathcal{A}^\mathbb{Z}$ (for aperiodic points), we say that $\sigma$ is \emph{fully recognizable (for aperiodic points)}.
\end{definition}

\begin{remark}\label{rem:sigma}
Let $\sigma:\, \mathcal{A} \to \mathcal{B}^+$, $y \in \mathcal{B}^\mathbb{Z}$, let $(X,T)$ be a shift with $X \subseteq \mathcal{A}^\mathbb{Z}$, and $Y = \bigcup_{k \in \mathbb{Z}} T^k \sigma(X)$.
\begin{enumerate}
\itemsep.5ex
\item
If $(k,x)$ is a $\sigma$-representation of~$y$, then $(k+|\sigma(x_{[0,\ell)})|, T^\ell(x))$ is a $\sigma$-representation of~$y$ for all $\ell > 0$ and $(k-|\sigma(x_{[\ell,0)})|, T^\ell(x))$ is a $\sigma$-representation of~$y$ for all $\ell < 0$.
This class of $\sigma$-representations contains a unique centered $\sigma$-representation.

\item 
The point~$y$ has a (centered) $\sigma$-representation in~$X$ if and only if $y \in Y$. 
Thus $\sigma$ is recognizable in~$X$ if and only if each $y \in Y$ has a \emph{unique} centered $\sigma$-representation in~$X$. 
The same variant holds for recognizability in~$X$ for aperiodic points. \\
Note that $Y = \bigcup_{0 \le k < \max_{a\in\mathcal{A}}|\sigma(a)|} T^k \sigma(X)$, thus $Y$ is closed and, hence, $(Y, T)$ a shift. 

\item 
Recognizability in~$X$ (for aperiodic points) is also equivalent to the fact that, for any point $x \in X$ (such that $\sigma(x)$ is aperiodic), $(0,x)$ is the unique centered $\sigma$-representation of~$\sigma(x)$.
 
\item 
Let $\sigma$ be a substitution that defines the substitutive shift $(X_\sigma,T)$. The classical definition of  recognizability of~$\sigma$ used in dynamics  corresponds to the choice $X=X_{\sigma}$ in the previous definition, i.e., $\sigma$ is recognizable in~$X_\sigma$. Recall that a substitution is \emph{primitive} if some power of its incidence matrix is positive, and it is \emph{aperiodic} if its shift contains no periodic points. 
Moss\'{e} \cite{Mosse:92,Mosse:96} showed that if $\sigma$ is primitive and aperiodic, then it is recognizable. 
She uses a combinatorial definition of recognizability that is slightly weaker than recognizability in~$X_\sigma$. (Moss\'{e}'s definition is similar to Definition~\ref{def:mossesadic} below, with the difference that Moss\'{e} works with one-sided fixed points of substitutions, whereas we consider two-sided points in~$X_\sigma$; her proof  in \cite{Mosse:92,Mosse:96} generalizes in a straightforward manner to two-sided points.) The relations between dynamically and combinatorially defined notions of recognizability are discussed in Theorem~\ref{equivalence} below, which forms a generalization of results proved in \cite{Host:1986,Queffelec:10}. 

Bezuglyi, Kwiatkowski and Medynets \cite{Bezugly:2009} extended Moss\'{e}'s  recognizability result to aperiodic substitutions. 
In Theorem~\ref{t:substrec} below, we show that any substitution~$\sigma$ is recognizable in~$X_\sigma$ for aperiodic points. \\
An example of a non-recognizable substitution with both periodic and aperiodic points in its shift is $\sigma:\, \{0,1\} \to \{0,1\}^+$, $\sigma(0) = 0010$, $\sigma(1) = 11$.
Then $X_\sigma$ contains $y = \cdots 111 \cdots$ and the fixed point~$\tilde{y}$ of $\sigma$ with $\tilde{y}_{-1} = \tilde{y}_0 = 0$, which is aperiodic; $y$~has two centered $\sigma$-representations, while $\tilde{y}$ has a unique centered $\sigma$-representation.

\item
Let $\bar{X}_\sigma$ be the set of all one-sided points $\bar{x} = (\bar{x}_n)_{n\in \mathbb{N}}$ which have a continuation on the left to a point in~$X_\sigma$, i.e., points $\bar{x}$ for which there exists  a point $x=(x_n)_{n\in \mathbb{Z}}\in X_\sigma$ such that $x_n=\bar{x}_n$ for all $n\in\mathbb N$. Then $\sigma$ is \emph{unilaterally recognizable} if each point $\bar{x}\in \bar{X}_\sigma$  has a unique centered $\sigma$-representation in~$\bar{X}_\sigma$.  In Moss\'{e}'s earlier work \cite{Mosse:92}, once again working with a combinatorial definition of recognizability, she  showed that a primitive aperiodic substitution $\sigma$ is  not necessarily unilaterally recognizable. Indeed, unicity can fail at the start of the word $\bar{x}$. This entails that additional conditions are required for unilateral recognizability.
In this context, the notion of \emph{postfixed} substitutions is of interest: a~substitution is said to be \emph{postfixed} if for any letters $a$ and~$b$, if $\sigma(a) \neq \sigma( b)$, then $\sigma(a)$ is not a suffix of~$\sigma(b)$; see~\cite{Host:1986}. 
While postfixed substitutions are unilaterally recognizable, this is not necessarily true when the image of a letter is a strict prefix of the image of another letter. 
As an illustration of this fact, consider the substitution $\sigma:\, 0 \mapsto 010, \, 1 \mapsto 10$, $x = \sigma(0) \sigma^{2}(0) \cdots$. Note that $x=T^2(u)$ where $u$ is the one-sided fixed point of $\sigma$ starting with 1.
Then we have $\sigma(0x) = 0 \sigma(1x)$, where both $0x$ and $1x$ belong to the one-sided shift defined by~$\sigma$ as $\sigma^n(1) = 1 0 \sigma(0) \cdots \sigma^{n-1}(0)$ and $\sigma^n(01) = \sigma^{n-1}(01) \sigma^n(0) = 01 \sigma(0) \cdots \sigma^n(0)$ for all $n \ge 2$; cf.\ \cite[Example~3.2]{Crabb:10}. 
\end{enumerate}
\end{remark}

Next we give our variant of Moss\'e's combinatorial definition of recognizability. To this end we need the following notion of $\sigma$-cutting points.

\begin{definition}[Cutting points]
Let $(k,x)$ be a $\sigma$-representation for some morphism~$\sigma$, and let $\ell \in \mathbb{Z}$.
The \emph{$\ell$-th $\sigma$-cutting point} of $(k,x)$ is $|\sigma(x_{[0,\ell)})|-k$ if $\ell \ge 0$, $-|\sigma(x_{[\ell,0)})|-k$ if $\ell < 0$. 
Denote by $C_\sigma(k,x)$ the collection of $\sigma$-cutting points of $(k,x)$, i.e., 
\[
C_\sigma(k,x) = \{|\sigma(x_{[0,\ell)})|-k:\, \ell \ge 0\} \cup \{-|\sigma(x_{[\ell,0)})|-k:\, \ell < 0\};
\]
see Figure \ref{Figure_1}.
Two $\sigma$-representations $(k,x), (k',x')$ have a \emph{common $\sigma$-cut} if $C_\sigma(k,x) \cap C_\sigma(k',x') \ne \emptyset$. 
\end{definition}

\begin{figure}[ht]\begin{tikzpicture}

\draw(-6.4,0)--(7.2,0) (-5.5,-.4)node[below]{$(-2)$-nd}--(-5.5,.2) (-2.5,-.4)node[below]{$(-1)$-st}--(-2.5,.2) (0,-.4)node[below]{$0$-th}--(0,.2) (1.3,-.4)node[below]{$1$-st}--(1.3,.2) (3.4,-.4)node[below]{$2$-nd}--(3.4,.2);
\node[above] at (-5.9,0){$\cdots\vphantom{y_|}$};
\node[below] at (-5.9,0){$\cdots\vphantom{(}$};
\node[above] at (-4,0){$y_{-|\sigma(x_{-2}x_{-1})|-k}\cdots$};
\node[below] at (-4,0){$\sigma(x_{-2})$};
\node[above] at (-1.25,0){$y_{-|\sigma(x_{-1})|-k}\cdots$};
\node[below] at (-1.25,0){$\sigma(x_{-1})$};
\node[above] at (.65,0){$y_{-k}\cdots\vphantom{y_|}$};
\node[below] at (.65,0){$\sigma(x_0)$};
\node[above] at (2.35,0){$y_{|\sigma(x_0)|-k}\cdots$};
\node[below] at (2.35,0){$\sigma(x_1)$};
\node[above] at (4.6,0){$y_{|\sigma(x_0x_1)|-k}\cdots$};
\node[below] at (4.6,0){$\sigma(x_2)\quad\cdots$};
\node[below] at (6,-.4){$\sigma$-cutting point};
\end{tikzpicture}
\caption{A~$\sigma$-representation $(k,x)$ of~$y$ and its $\sigma$-cutting points; if $(k,x)$ is centered, then $y_0$ is a letter of $\sigma(x_0)$.}
\label{Figure_1}
\end{figure}
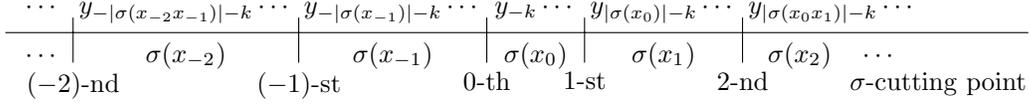

\begin{definition}[Moss\'e's recognizability] \label{def:mossesadic}  
Let $\sigma:\, \mathcal{A} \to \mathcal{B}^+$ be a morphism, and let $x \in \mathcal{A}^\mathbb{Z}$, $y = \sigma(x)$. 
We say that $\sigma$ is \emph{recognizable in the sense of Moss\'{e} for~$x$} if there exists $\ell$ such that, for each $m \in C_\sigma(0,x)$, $m' \in \mathbb{Z}$, $y_{[m-\ell,m+\ell)} =  y_{[m'-\ell,m'+\ell)}$ implies that $m' \in C_\sigma(0,x)$.
\end{definition}

Note that the constant of recognizability~$\ell$ is computable for primitive morphisms \cite{Durand-Leroy}.
For a discussion of the various notions of recognizability, including Martin's one~\cite{Martin:71}, see~\cite{Mosse:96,Crabb:10}. 

\subsection{Relations between different definitions of recognizability} \label{sec:recdef}

We now show that the two definitions of recognizability given in Section~\ref{sec:defdef} are closely related. In particular, we prove the following result. Given a point $x$, the \emph{shift generated by $x$} is the shift defined by $\mathcal L_x$.

\begin{theorem}\label{equivalence}
Let $\sigma:\, \mathcal{A} \to \mathcal{B}^+$ be a morphism, $x \in \mathcal{A}^\mathbb{Z}$, and $(X,T)$ be the shift generated by~$x$. 
Then the following assertions hold. 
\begin{enumerate}
\item
If $\sigma$ is recognizable in~$X$, then $\sigma$ is recognizable in the sense of Moss\'{e} for~$x$.
\item 
If $(X,T)$ is minimal, $\sigma$ is injective on letters, and $\sigma$ is recognizable in the sense of Moss\'{e} for~$x$, then $\sigma$ is recognizable in~$X$. 
\end{enumerate}
\end{theorem}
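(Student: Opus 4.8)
The plan is to translate between the two formulations using the geometry of cutting points.

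For part (1), I would argue by contraposition. Suppose $\sigma$ is *not* recognizable in the sense of Mossé for $x$; that is, for every window size $\ell$ there exist a cutting point $m_\ell \in C_\sigma(0,x)$ and an index $m'_\ell \notin C_\sigma(0,x)$ with $y_{[m_\ell-\ell,\,m_\ell+\ell)} = y_{[m'_\ell-\ell,\,m'_\ell+\ell)}$. Using shift-invariance, replace $(0,x)$ by a centered representation and pass to the limit: applying $T^{m_\ell}$ we get two points of $X$ (one being a shift of $x$, obtained by desubstituting at $m_\ell$) whose $\sigma$-images agree on a window growing to $\pm\infty$ but whose cutting points near $0$ are misaligned. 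Taking a convergent subsequence of these pairs (using compactness of $X\times X$ and of the shift space of $\sigma$-representations, cf.\ Remark~\ref{rem:sigma}(2)), one obtains a point $z\in Y$ with two distinct centered $\sigma$-representations in $X$: the misalignment at $m'_\ell$ survives in the limit because $m'_\ell$ is bounded away from $C_\sigma$ uniformly in $\ell$ is *not* guaranteed, so the key technical point is to normalize correctly — shift so that the offending non-cut sits at a bounded position, then extract. This contradicts recognizability in $X$, proving (1).

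For part (2), assume minimality of $(X,T)$, that $\sigma$ is injective on letters, and Mossé-recognizability for $x$ with constant $\ell$. Suppose $z\in\mathcal{B}^\mathbb{Z}$ has two centered $\sigma$-representations $(k,w),(k',w')$ in $X$. The goal is to show the two induced partitions of $\mathbb{Z}$ into $\sigma$-blocks coincide, i.e.\ $C_\sigma(k,w)=C_\sigma(k',w')$, after which injectivity on letters forces $(k,w)=(k',w')$. By minimality, every sufficiently long subword of $z$ that is a block $\sigma(w_j)$ also occurs in $y=\sigma(x)$ at a $\sigma$-cutting point of $(0,x)$; more precisely, since $w\in X$ and $X$ is the orbit closure of $x$, arbitrarily long central portions of $T^{k}\sigma(w)$ equal central portions of some $T^a\sigma(T^b x)$. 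First I would establish that the two representations have a common $\sigma$-cut: if not, then near some position the $\sigma(w_j)$-block and $\sigma(w'_{j'})$-block overlap staggeredly, and pulling a long enough neighborhood back into $y$ via minimality contradicts the Mossé property with window $\ell$ (an occurrence of a pattern straddling a cut of one representation but matching at a shifted position). Once a common cut is found, a standard synchronization argument propagates it: adjacent blocks must then also align (again by applying the Mossé condition to the images of individual letters, which are distinguished by injectivity on letters), so $C_\sigma(k,w)=C_\sigma(k',w')$ everywhere, and then $w=T^{j}w'$ for the appropriate $j$ with $k,k'$ matching the centering, forcing equality.

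**Main obstacle.** The delicate step in both parts is the passage between "occurs somewhere in $z$" and "occurs at a cutting point of $(0,x)$ in $y$." In (1) the difficulty is normalization before taking limits — ensuring the misalignment witnessed by $m'_\ell$ is not pushed to infinity — which is why one works with centered representations and a compactness argument in the space $\bigcup_{0\le k<\max_a|\sigma(a)|}\{k\}\times X$ from Remark~\ref{rem:sigma}(2). In (2) the real work is the *common cut* step: minimality is exactly what lets us import the Mossé window condition (valid a priori only along the orbit of $x$) to an arbitrary point of $Y$, and injectivity on letters is what upgrades an alignment of blocks to an alignment of the underlying desubstituted sequences. I expect the bookkeeping around half-open intervals and the sign conventions for negative cutting points to be the only other source of friction, but that is routine.
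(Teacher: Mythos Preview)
Your approach to part~(1) is correct and coincides with the paper's: argue by contraposition, normalize so that the non-cut $m'_\ell$ lies at a bounded offset (there are only finitely many pairs $(k,a)$ with $0<k<|\sigma(a)|$, so pass to a subsequence with fixed $(k,a)$), and extract a limit $\tilde y\in\sigma(X)\cap T^k\sigma([a])$ by compactness, contradicting recognizability in~$X$.

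For part~(2) your route is viable but genuinely different from the paper's. The paper argues topologically through a chain of claims: it first characterizes $\sigma(X)\subset Y$ via Moss\'e's condition as those $\tilde y$ for which every approximating sequence $T^{m_i}(y)\to\tilde y$ eventually has $m_i\in C_\sigma(0,x)$; from this it deduces that $\sigma(X)$ is clopen in~$Y$, that $T^m\sigma(\tilde x)\in\sigma(X)$ iff $m\in C_\sigma(0,\tilde x)$, that $\sigma\colon X\to\sigma(X)$ is a homeomorphism (this is where injectivity on letters enters), and finally that the tower collection $\{T^k\sigma([a]):a\in\mathcal A,\,0\le k<|\sigma(a)|\}$ is a clopen partition of~$Y$. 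Your combinatorial route bypasses all of this and compares cut sets directly. To make it go through, though, the single ``pull back into $y$'' you describe is not enough: you must pull a window of $z$ around a position $p$ back into $y$ \emph{twice}, once via an approximation of $w$ by shifts of~$x$ (giving $p\leftrightarrow q$ with $q\in C_\sigma(0,x)$ iff $p\in C_\sigma(k,w)$) and once via an approximation of~$w'$ (giving $p\leftrightarrow q'$ with $q'\in C_\sigma(0,x)$ iff $p\in C_\sigma(k',w')$). Then $y_{[q-\ell,q+\ell)}=z_{[p-\ell,p+\ell)}=y_{[q'-\ell,q'+\ell)}$ and Moss\'e forces $q,q'$ to be simultaneously cuts or non-cuts, hence $C_\sigma(k,w)=C_\sigma(k',w')$ outright; your ``find one common cut, then propagate'' decomposition is more than is needed. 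The paper's approach buys the clopen Kakutani--Rohlin partition~$\mathcal P$ as a structural byproduct (reused later for Bratteli--Vershik representations); yours is leaner for the bare equivalence but leaves that structure implicit.
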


\begin{proof}
This proof is an appropriate generalization of arguments in \cite{Host:1986}.
Let $y = \sigma(x)$ and $Y = \bigcup_{k\in\mathbb{Z}} T^k \sigma(X)$.

Suppose that $\sigma$ is recognizable in~$X$. 
We claim that $\sigma$ is recognizable in the sense of Moss\'{e} for~$x$. 
For if not, then for all~$i$, there are integers $m_i$ and $m'_i$ such that $m_i \in C_\sigma(0,x)$, $m'_i \notin C_\sigma(0,x)$, and $d(T^{m_i}(y),  T^{m'_i}(y)) \to 0$ as $i \to \infty$. 
Passing to a subsequence, we can suppose that $T^{m_i}(y) \to \tilde{y} \in \sigma(X)$ and that there are some fixed $a \in \mathcal{A}$, $0 < k < |\sigma(a)|$, such that $T^{m'_i}(y) \in T^k \sigma([a])$ for all~$i$, where $[a]$ denotes the cylinder $\{\tilde{x} \in X:\, \tilde{x}_0 = a\}$. 
Hence $\tilde{y} \in \sigma(X) \cap T^k \sigma([a])$, and this contradicts the assumption that $\sigma$ is recognizable in~$X$.

Conversely, suppose that $(X,T)$ is minimal, $\sigma$ is injective on letters, and $\sigma$ is recognizable in the sense of Moss\'{e} for~$x$.

\begin{claim}\label{claim1}
For $\tilde{y} \in Y$, we have $\tilde{y} \in \sigma(X)$ if and only if whenever for a sequence $(m_i)$ we have $\lim_{i\to\infty} T^{m_i}(y) = \tilde{y}$, then $m_i \in C_\sigma(0,x)$ for all $i$ large enough.
\end{claim}

Let $\tilde{y} \in Y = \bigcup_{k\in\mathbb{Z}} T^k \sigma(X)$.
Minimality of $(X,T)$ implies the existence of a sequence $(m_i)$ satisfying $\lim_{i\to\infty} T^{m_i}(y) = \tilde{y}$. 
Suppose first that there is $i_0 \in \mathbb{N}$ such that $m_i \in C_\sigma(0,x)$ for all $i \ge i_0$, and let $k_i$ be such that $T^{m_i}(y) = \sigma(T^{k_i}(x))$. 
Moving to a convergent subsequence if necessary, we have $\lim_{i\to\infty} T^{k_i}(x) = \tilde{x} \in X$, so that $\tilde{y} \in \sigma(X)$.
Conversely, suppose that $\tilde{y} = \sigma(\tilde{x})$ with $\tilde{x} \in Y$. 
Let $(m_i)$ be an arbitrary sequence satisfying $T^{m_i}(y) \to \tilde{y}$. 
Minimality implies that there is a  sequence $(k_i)$ with $T^{k_i}(x) \to \tilde{x}$, so $\sigma(T^{k_i}(x)) \to \tilde{y}$. 
Fix $\ell$ as in Definition~\ref{def:mossesadic}, and let the sequence $(h_i)$ satisfy $\sigma(T^{k_i}(x)) = T^{h_i}(y)$. 
Then each $h_i \in C_\sigma(0,x)$, and $y_{[m_i-\ell, m_i +\ell ]} = y_{[h_i-\ell, h_i +\ell ]}$ for large~$i$, so that $m_i \in C_\sigma(0,x)$, and we have proved Claim~\ref{claim1}.
 
\begin{claim} \label{claim2}
The set $\sigma(X)$ is a clopen subset of~$Y$.
\end{claim}

The set $\sigma(X)$ is closed in~$Y$ by compactness and continuity of~$\sigma$, and we see that its complement is closed using Claim~\ref{claim1}: indeed, by a Cantor diagonal argument, we see that the set of all $\tilde{y} \in Y$ satisfying $\lim_{i\to\infty} T^{m_i}(y) = \tilde{y}$ for a sequence $(m_i)$ with $m_i \notin C_\sigma(0,x)$ for arbitrarily large~$i$ is closed. 
This proves Claim~\ref{claim2}.

\begin{claim}\label{claim3}
Let $\tilde{x} \in X$. 
Then $T^m(\sigma(\tilde{x})) \in \sigma(X)$ if and only if $m \in C_\sigma(0,\tilde{x})$.  
\end{claim}
 
Set $\tilde{y} = \sigma(\tilde{x})$. 
If $m \in C_\sigma(0,\tilde{x})$, then $T^m(\tilde{y}) = \sigma(T^k(\tilde{x}))$ for some $k \in \mathbb{Z}$ and thus $T^m(\tilde{y}) \in \sigma(X)$.
For the converse, write $\tilde{x} = \lim_{i\to\infty} T^{k_i}(x)$ and $m_i = |\sigma(x_{[0,k_i)})|$, hence $\tilde{y} = \lim_{i\to\infty} T^{m_i}(y)$ and $\lim_{i\to\infty} T^{m+m_i}(y) = T^m(\tilde{y}) \in \sigma(X)$.
Here we have assumed that the $k_i$'s are positive; the other case is similar. 
By Claim~\ref{claim1}, we have $m + m_i \in C_\sigma(0,x)$ for all large~$i$. 
As $\lim_{i\to\infty} T^{k_i}(x) = \tilde{x}$, this implies that $m \in C_\sigma(0,\tilde{x})$. 
 
\begin{claim}\label{claim4}
The map $\sigma:\, X \to \sigma(X)$ is a homeomorphism.
\end{claim}

This is where we use that $\sigma$ is injective on letters. 
By the compactness of~$X$, it is sufficient to show that $\sigma:\, X \to \sigma(X)$ is injective. 
If $\sigma(\tilde{x}) = \sigma(x')$, then by Claim~\ref{claim3}, the first return time of $\sigma(\tilde{x})$ and $\sigma(x')$ under the shift to $\sigma(X)$ is $|\sigma(\tilde{x}_0)| = |\sigma(x'_0)|$. 
Thus $\sigma(\tilde{x}_0) = \sigma(x'_0)$ and injectivity  on letters of $\sigma$ now implies that $\tilde{x}_0 = x'_0$. Iterating this argument yields $x=x'$.

\begin{claim}\label{claim5}
The collection 
\begin{equation}\label{eq:parti}
\mathcal{P} = \{ T^k \sigma([a]):\, a \in \mathcal{A},\, 0\leq k<|\sigma(a)| \}
\end{equation}
is a clopen partition of~$Y$, so that $\sigma$ is recognizable in~$X$.
\end{claim}

It is clear that $\mathcal{P}$ is a cover of~$Y$, and the sets in~$\mathcal{P}$ are clopen by Claims~\ref{claim2} and~\ref{claim4}.  
Suppose that $\tilde{y} \in T^k \sigma([a]) \cap T^j \sigma([b])$, 
$0 \le k<|\sigma(a)|$, $0 \le j<|\sigma(b)|$, and $k\geq j$.
By shifting if necessary, we may assume that $j=0$, so that $\tilde{y} \in \sigma(X)$. 
Since $T^k \sigma(\tilde{x}) = \tilde{y} \in \sigma(X)$ for some $\tilde{x} \in [a]$, we have $k = 0$ by Claim~\ref{claim3}. 
Hence $j = k = 0$ and, with Claim~\ref{claim4}, we conclude that $\mathcal{P}$ is a partition. 
\end{proof}

Thus, while the substitution given in (5), Remark  \ref{rem:sigma}, is not unilaterally recognizable, it is, by Moss\'{e}'s Theorem,  recognizable in the sense of Moss\'{e}. Hence by Theorem \ref{equivalence}, $\sigma$ is recognizable in~$X_\sigma$.

We remark that for primitive substitutions, we can drop the assumption of injectivity in assertion~(2) of Theorem~\ref{equivalence}, see \cite{Mosse:92}.

\section{Recognizability for aperiodic points in the full shift} \label{sec:recogn-with-resp}

\subsection{Statement of the result}
In this section we show that a morphism is fully recognizable for aperiodic points under mild conditions; see Theorem \ref{t:recognizable}. 
This includes morphisms $\sigma:\, \mathcal{A} \rightarrow \mathcal{B}^+$ whose incidence matrices have rank equal to~$|\mathcal{A}|$, and thus in particular includes any substitution with an invertible incidence matrix. 
Hence, \emph{irreducible Pisot} substitutions~$\sigma$, i.e., those where the minimal polynomial of~$M_\sigma$ is the minimal polynomial of a Pisot number, are fully recognizable.
It also includes any morphism on two letters, any left (or right) \emph{permutative} morphism, and any morphism that is \emph{rotationally conjugate} to such a morphism. 
Here, a~morphism $\sigma:\, \mathcal{A} \to \mathcal{B}^+$ is \emph{left permutative} if the first letters of $\sigma(a)$ and $\sigma(b)$ are different for all distinct $a,b \in \mathcal{A}$.
It is \emph{right permutative} if the last letters of $\sigma(a)$ and $\sigma(b)$ are different for all distinct $a,b \in \mathcal{A}$. 
Two morphisms $\sigma, \tilde{\sigma}:\, \mathcal{A} \to \mathcal{B}^+$ are \emph{rotationally conjugate} if there is a word $w \in \mathcal{B}^*$ such that $\sigma(a) w = w \tilde{\sigma}(a)$ for all $a \in \mathcal{A}$ or $w \sigma(a) = \tilde{\sigma}(a) w$ for all $a \in \mathcal{A}$.

\begin{theorem} \label{t:recognizable}
Let $\sigma:\, \mathcal{A} \to \mathcal{B}^+$ be a morphism such that 
\begin{itemize}
\item $\mathrm{rk}(M_\sigma) = |\mathcal{A}|$,  or
\item $|\mathcal{A}| = 2$, or
\item  $\sigma$ is (rotationally conjugate to) a left or right permutative morphism.
\end{itemize}
Then $\sigma$ is fully recognizable for aperiodic points.
\end{theorem}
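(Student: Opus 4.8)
The plan is to invoke Remark~\ref{rem:sigma}(3) and reduce to the following statement: fix $x\in\mathcal A^{\mathbb Z}$ with $y:=\sigma(x)$ aperiodic, assume $(k,x')$ is a centered $\sigma$-representation of $y$, and show $(k,x')=(0,x)$. First I would handle the third bullet by reducing to the permutative case: if $\sigma$ is rotationally conjugate to a left or right permutative $\tilde\sigma$ through a word $w$, then centered $\sigma$-representations of $y$ correspond bijectively to centered $\tilde\sigma$-representations of a fixed shift of $y$, which is aperiodic exactly when $y$ is, and word reversal exchanges left and right permutativity, so it is enough to treat left permutative morphisms. Next I would note that in each of the three cases $\{\sigma(a):a\in\mathcal A\}$ is a code: a left permutative morphism is a prefix code; and if $\{\sigma(a)\}$ is not a code then, by the defect theorem, $\sigma$ factors through an alphabet of size $<|\mathcal A|$, which contradicts $\operatorname{rk}(M_\sigma)=|\mathcal A|$, and when $|\mathcal A|=2$ forces all $\sigma(a)$ to be powers of one word, so that every $\sigma(z)$ is periodic, which is impossible since $y$ is aperiodic. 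In particular $\sigma$ is injective on letters and every finite word has at most one factorization over $\{\sigma(a)\}$.

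Now let $E=C_\sigma(0,x)$ and $F=C_\sigma(k,x')$ be the cutting-point sets of the two factorizations of $y$. Since two common cuts $p<q$ force $E\cap[p,q]=F\cap[p,q]$, if $E=F$ then $0\in E\cap F$ gives $k=0$, hence $\sigma(x)=\sigma(x')$, and comparing blocks between consecutive cuts together with injectivity on letters yields $x=x'$, as wanted. The whole difficulty is therefore to rule out the existence of an aperiodic $y$ carrying two factorizations with $E\neq F$; by the rigidity of common cuts just noted, this amounts to two factorizations with no common cut on some half-line.

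This is the heart of the proof and the step I expect to be the main obstacle. Record the situation by the finite-valued map $s\colon\mathbb Z\to S$, where $s(n)$ lists, for each of the two factorizations, which image $\sigma(a)$ contains $y_n$ and the position of $y_n$ inside it; note $s$ determines $y$. When $\sigma$ is left permutative, the first letter of each new block of either factorization is read off from $y$, so, on the half-line where the two factorizations share no cut, $s(n)$ determines $s(n+1)$; hence $s$ is a bi-infinite orbit of a self-map $f$ of the finite set $S$. Every value of such an orbit lies in the eventual image $\bigcap_{m\ge0}f^m(S)$, on which $f$ is a bijection, so the orbit is periodic; thus $s$, and with it $y$, is periodic, contradicting aperiodicity (the remaining bookkeeping, namely upgrading half-line periodicity to a genuine period by also inspecting the other side and the one-sided uniqueness of prefix-code factorizations, is where some care is needed). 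When $\operatorname{rk}(M_\sigma)=|\mathcal A|$ the transition $s(n)\mapsto s(n+1)$ is no longer deterministic, so instead I would compare the two readings of $y$ on long intervals delimited by cuts of one factorization: the Parikh vectors of the two boundary fragments range over a finite set, so along a subsequence on which they are constant, $\ker M_\sigma=\{0\}$ forces the Parikh vectors of the corresponding blocks of $x$ and of $x'$ to remain locked together, and iterating this, together with the word (not merely abelian) structure at the fragments, again produces a period of $y$; this is the most technical case. Finally, when $|\mathcal A|=2$ there are at most two block lengths, and one can carry out directly the analysis of how two factorizations interleave with no common cut (in the spirit of Martin and of Crabb, Duncan and McGregor), once more producing a period of $y$. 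In all three cases the contradiction proves $(k,x')=(0,x)$, i.e.\ that $\sigma$ is fully recognizable for aperiodic points.
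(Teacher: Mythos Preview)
Your setup is sound: the reduction via rotational conjugacy is correct, the observation that in each case $\{\sigma(a):a\in\mathcal A\}$ is a code (via the defect theorem for the full-rank case and Fine--Wilf for $|\mathcal A|=2$) is correct, and your rigidity argument that two common cuts force all intermediate cuts to agree is correct. Your state-machine argument for the left permutative case is essentially the paper's Lemma~\ref{l:permutative} in different clothing: the paper tracks the pair $(x'_{\ell'},h-h')$, which is equivalent to your state~$s$, and left permutativity makes the transition deterministic, forcing periodicity. So far so good.

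The genuine gap is the full-rank case. Your Parikh-vector observation is correct as far as it goes: if two $x$-cuts $c_i<c_j$ carry the same state $(x'_{m_i},p_i)=(x'_{m_j},p_j)$, then injectivity of $M_\sigma$ yields $\mathrm{Parikh}(x_{[i,j)})=\mathrm{Parikh}(x'_{[m_i,m_j)})$. But this is purely abelian information, and the transition $s_i\mapsto s_{i+1}$ is \emph{not} deterministic without permutativity, since when an $x'$-block ends the next letter $x'_{m+1}$ is not determined by~$y$ locally. Knowing that Parikh vectors over matching-state intervals agree does not pin down the word $y_{[c_i,c_j)}$, nor does it force the future of either factorization; ``iterating this together with the word structure at the fragments'' is a hope, not an argument, and I do not see how to make it one. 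The paper avoids this obstacle entirely by a different mechanism: it argues by induction on the total length $\interleave\sigma\interleave$. From a minimal non-recognizable $\sigma$ with $\mathrm{rk}(M_\sigma)=|\mathcal A|$, it manufactures a decomposition $\sigma=\sigma_2\,\tau_2\,\tau_1$ by first splitting one letter into two (which creates non-injectivity on one-sided sequences), then collapsing via Lemma~\ref{l:reducealphabet}; the resulting $\sigma_2$ has strictly smaller total length, still has full rank, and each $\sigma_2(a)$ is a prefix of some $\sigma(b)$. Minimality makes $\sigma_2$ and $\tau_2\tau_1$ recognizable, and Lemma~\ref{lem:tele} on compositions closes the induction. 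This decomposition-and-induction idea is the missing ingredient; your direct combinatorial attack does not supply a substitute for it.

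For $|\mathcal A|=2$ you defer to Martin and Crabb--Duncan--McGregor. That is acceptable as a citation, but note that the paper gives a self-contained route you could have taken: if $|\mathcal A|=2$ and $\sigma(\mathcal A^{\mathbb Z})$ contains an aperiodic point, then $\sigma$ is rotationally conjugate to a left permutative morphism (else $\sigma(aa\cdots)=\sigma(bb\cdots)$ and Fine--Wilf makes all images periodic), reducing to the case you already handled.
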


\subsection{Permutative morphisms and rotational conjugacy}
We first consider the theorem for left or right permutative morphisms. 
Then we reduce the other cases to this one. 
In particular, Lemma~\ref{l:permutative} below states  that  left or right permutative morphisms are fully recognizable for aperiodic points. 
We start with the following uniqueness result of $\sigma$-representations along orbits.

\begin{lemma}\label{lem:uniqueorbit}
Let $\sigma:\, \mathcal{A} \to \mathcal{B}^+$ be a morphism.
Let $y \in \mathcal{B}^{\mathbb{Z}}$ be an aperiodic point, and let  $(k,x)$, $(k',x')$ be centered $\sigma$-representations of~$y$ with $x' = T^\ell (x)$ for some $\ell \in \mathbb{Z}$. 
Then $(k,x) = (k',x')$. 
In other words, aperiodic points have a unique centered representation along an orbit.     
\end{lemma}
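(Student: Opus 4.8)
The plan is to transfer the equality $y = T^k\sigma(x) = T^{k'}\sigma(x')$ with $x' = T^\ell(x)$ into a statement about the cutting-point sets $C_\sigma(k,x)$ and $C_\sigma(k',x')$, and then use aperiodicity of $y$ to force $\ell = 0$ (and hence $k = k'$). First I would observe that, by Remark~\ref{rem:sigma}(1), the orbit of $x$ under the shift gives, for each index shift $\ell$, a canonical $\sigma$-representation of $y$, namely $(k + |\sigma(x_{[0,\ell)})|,\, T^\ell(x))$ when $\ell > 0$ and $(k - |\sigma(x_{[\ell,0)})|,\, T^\ell(x))$ when $\ell < 0$; moreover each class of such representations contains exactly one centered representation. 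So the hypothesis $x' = T^\ell(x)$ together with $(k',x')$ being centered forces $(k',x')$ to be \emph{that} canonical centered representation associated with $T^\ell(x)$: concretely, $k' = k + |\sigma(x_{[0,\ell)})|$ for $\ell \ge 0$ (and the analogous formula for $\ell<0$), provided we already know $(k,x)$ is the centered one, which it is by assumption. The point is that once we know $(k,x)$ is centered and $x' = T^\ell(x)$, the representation $(k',x')$ is \emph{determined} as a member of the orbit class, so the only freedom left is the integer $\ell$.

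The heart of the argument is then to rule out $\ell \neq 0$. Suppose $\ell > 0$ (the case $\ell < 0$ is symmetric). Then $y = T^{k}\sigma(x)$ and $y = T^{k + |\sigma(x_{[0,\ell)})|}\sigma(T^\ell(x))$. Writing $p = |\sigma(x_{[0,\ell)})| \ge \ell \ge 1$, the two decompositions of $y$ into blocks $\sigma(x_n)$ are related by: the block structure of $y$ given by the first representation, shifted by $p$ positions, coincides with the block structure given by the second representation — but the second representation's block structure is just the first one with its first $\ell$ blocks removed. Hence the sequence of blocks $(\sigma(x_n))_{n\in\mathbb Z}$, read as a bi-infinite concatenation, is invariant under deleting an initial segment of total length $p$; equivalently, $y$ (as an element of $\mathcal B^{\mathbb Z}$, ignoring where the origin sits) satisfies $T^p(y') = y'$ for the appropriate re-centering $y'$ of $y$. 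More carefully: from $T^{k}\sigma(x) = T^{k+p}\sigma(T^\ell(x)) = T^{k+p}\sigma(T^\ell(x))$ and the identity $\sigma(T^\ell(x)) = T^{|\sigma(x_{[0,\ell)})|}\sigma(x) = T^p\sigma(x)$ (which holds because $\sigma$ respects concatenation and $T$ on the image side shifts by exactly the length consumed), we get $T^k\sigma(x) = T^{k+p}\, T^p\sigma(x) = T^{k+2p}\sigma(x)$, i.e.\ $T^{2p}(y) = y$. Wait — let me redo: $T^{k+p}\sigma(T^\ell x) = T^{k+p}T^p\sigma(x) = T^{k+2p}\sigma(x)$, and this equals $y = T^k\sigma(x)$, so $T^{2p}\sigma(x) = \sigma(x)$ after applying $T^{-k}$, hence $T^{2p}(y) = y$ as well. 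Actually the cleanest route avoids the factor $2$: we have $\sigma(x') = \sigma(T^\ell x) = T^p\sigma(x)$, and $y = T^{k'}\sigma(x') = T^{k'}T^p\sigma(x) = T^{k'+p}\sigma(x)$; but also $y = T^k\sigma(x)$, so $T^{k'+p-k}\sigma(x) = \sigma(x)$, i.e.\ $T^{k'+p-k}(y) = y$. Since $p \ge 1$ and, once one checks $k' \ge k$ from the centering (namely $k' = k + $ a positive length when $\ell>0$, so $k'+p-k > 0$), this exhibits a nonzero period of $y$, contradicting aperiodicity.

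Thus $\ell = 0$, and then $x' = x$; two centered $\sigma$-representations $(k,x)$ and $(k',x)$ of the same $y$ with the same underlying point must have $k = k'$, since by definition a centered representation satisfies $0 \le k < |\sigma(x_0)|$ and $y_{-k}$ is the first letter of $\sigma(x_0)$ inside $y$ — more formally, $k$ and $k'$ are both determined by the condition that $-k$ (resp.\ $-k'$) is the largest cutting point of the common representation that is $\le 0$, so $k = k'$. Hence $(k,x) = (k',x')$, as claimed. The only step requiring real care is the bookkeeping that $\sigma(T^\ell x) = T^{|\sigma(x_{[0,\ell)})|}\sigma(x)$ and that the resulting translation $k' + p - k$ is genuinely nonzero when $\ell \neq 0$; this is where one must handle the signs of $\ell$ and $k' - k$ correctly, but it is entirely routine once set up, and aperiodicity of $y$ does the rest.
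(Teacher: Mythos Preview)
Your overall strategy matches the paper's: rewrite $y = T^{k'}\sigma(T^\ell x)$ as $y = T^{k'+p}\sigma(x)$ with $p = |\sigma(x_{[0,\ell)})|$ (for $\ell>0$), compare with $y = T^k\sigma(x)$, and extract a contradiction. Your identity $\sigma(T^\ell x) = T^p\sigma(x)$ and the conclusion $T^{k'+p-k}(y)=y$ are both correct.

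The gap is in your justification that $k'+p-k \neq 0$. You assert ``$k' \ge k$ from the centering (namely $k' = k + $ a positive length when $\ell>0$)'', but the centering constraints say only $0\le k<|\sigma(x_0)|$ and $0\le k'<|\sigma(x_\ell)|$; they do not compare $k$ and~$k'$. Your earlier claim $k' = k + p$ is also unjustified: Remark~\ref{rem:sigma}(1) merely says that $(k+p,\,T^\ell x)$ is \emph{some} $\sigma$-representation of~$y$, not that it coincides with $(k',T^\ell x)$. Identifying the two would itself require aperiodicity of~$y$ --- and in fact the sign comes out the other way: your own equation $T^{k'+p}\sigma(x)=T^k\sigma(x)$ together with aperiodicity forces $k=k'+p$, not $k'=k+p$.

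The missing ingredient is precisely the inequality $k < |\sigma(x_0)|$, which you never use. The paper's argument is: from $T^k\sigma(x)=T^{k'+p}\sigma(x)$ and aperiodicity one gets $k = k'+p$; but then
\[
|\sigma(x_0)| > k = k' + |\sigma(x_{[0,\ell)})| \ge |\sigma(x_{[0,\ell)})| \ge |\sigma(x_0)|,
\]
a contradiction. Equivalently, in your framing: if $k'+p-k=0$ then $k = k'+p \ge p \ge |\sigma(x_0)|$, violating centering; hence $k'+p-k\neq 0$ and $y$ is periodic. Either way, the centering bound on~$k$ (not a comparison of $k'$ with $k$) is what closes the argument.
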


\begin{proof}
Suppose that $(k,x)\neq(k',x')$. 
Since $y$ is aperiodic this implies $\ell\neq 0$ and we have
\[
y = T^k \sigma(x) = T^{k'} \sigma(T^\ell(x)) = 
\begin{cases}T^{k'+|\sigma(x_{[0,\ell)})|} \sigma(x) & \text{if}\ \ell>0, \\ T^{k'-|\sigma(x_{[\ell,0)})|} \sigma(x) & \text{if}\ \ell<0.\end{cases}
\]
We have assumed that $0\leq k<|\sigma(x_0)|$ and $0\leq k' < |\sigma(x'_0)| = |\sigma(x_\ell)|$, thus 
\begin{align*}
|\sigma(x_0)| > k = k'+|\sigma(x_{[0,\ell)})| \geq |\sigma(x_0)| & \quad \text{if}\ \ell>0, \\
0 \leq k = k'-|\sigma(x_{[\ell,0)})| < 0 & \quad\text{if}\ \ell<0, 
\end{align*} 
and either inequality leads to a contradiction. 
\end{proof}

\begin{lemma} \label{l:permutative}
Let $\sigma:\, \mathcal{A} \to \mathcal{B}^+$ be a left or right permutative morphism, and let $y \in \mathcal{B}^\mathbb{Z}$. 
If $y$ has two centered $\sigma$-representations, then $y$ is periodic. 
\end{lemma}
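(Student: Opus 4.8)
The plan is to reduce to the left permutative case and then track the two representations by a finite‑state bookkeeping along $\mathbb Z$. Suppose $y = T^k\sigma(x) = T^{k'}\sigma(x')$ with two \emph{distinct} centered $\sigma$-representations $(k,x),(k',x')$, and write $P = C_\sigma(k,x)$, $Q = C_\sigma(k',x')$ for their cutting‑point sets. If $\sigma$ is right permutative, pass to mirror images (reverse every $\sigma(a)$, reverse $y$, $x$, $x'$); this is routine and reduces to the left permutative case, so assume henceforth that $\sigma$ is left permutative, i.e.\ $a\mapsto(\text{first letter of }\sigma(a))$ is injective. Then $\{\sigma(a):a\in\mathcal A\}$ is a prefix code and decoding is instantaneous from the left: at any $c\in P\cap Q$ the letter $y_c$ is the first letter of both the rep‑$1$ and the rep‑$2$ block starting at $c$, so these blocks coincide, and iterating shows $P$ and $Q$ agree on $[c,\infty)$. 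Consequently, if $P\cap Q$ is unbounded below then $P=Q$; and if $P=Q$ the two representations lie on the same $T$-orbit, so Lemma~\ref{lem:uniqueorbit} forces $y$ to be periodic and we are done. Hence we may assume $P\ne Q$, so that $P\cap Q$ is empty or bounded below.

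The central device is the joint state $S_n=((a_n,\alpha_n),(b_n,\beta_n))$, where $\sigma(a_n)$ is the rep‑$1$ block containing position $n$ with $y_n$ at offset $\alpha_n$, and $\sigma(b_n)$ the corresponding rep‑$2$ block with $y_n$ at offset $\beta_n$; these states range over a finite set $\mathcal S$ and $y_n=\sigma(a_n)_{\alpha_n}$ is read off from $S_n$. The key point is that the transition $S_n\mapsto S_{n+1}$ is governed by a fixed partial map $F\colon\mathcal S\to\mathcal S$ \emph{except} when $n+1\in P\cap Q$: if at least one of the two blocks does not end at position $n$, then $y_{n+1}$ is the next letter of that block and is thus determined by $S_n$, and if in addition the other block does end at $n$, then left permutativity turns the known letter $y_{n+1}$ into the letter of the newly started block; the only genuinely free transitions happen when \emph{both} blocks end simultaneously, which is exactly the situation $n+1\in P\cap Q$. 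I will also use the elementary fact that a left‑infinite (or bi‑infinite) orbit of a partial map on a finite set is periodic: every state occurring in such an orbit has arbitrarily long backward chains, hence lies on an $F$-cycle, so $F$ restricts to a bijection on the set of occurring states and the orbit is therefore periodic.

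Now split according to $P\cap Q$. If $P\cap Q=\emptyset$, then $S_{n+1}=F(S_n)$ for every $n\in\mathbb Z$, so $(S_n)_{n\in\mathbb Z}$ is periodic, whence $y$ is periodic. If $P\cap Q$ is nonempty and bounded below, put $c^\ast=\min(P\cap Q)$. For all $n\le c^\ast-2$ we have $n+1\notin P\cap Q$, so $(S_n)_{n\le c^\ast-1}$ is a left‑infinite $F$-orbit, hence periodic: $S_n=S_{n-d}$ for all $n\le c^\ast-1$, with $d\ge 1$. Since $c^\ast\in P\cap Q$, the state $S_{c^\ast-1}$ is one in which \emph{both} blocks end; therefore so is $S_{c^\ast-1-d}$, which forces $c^\ast-d\in P\cap Q$, contradicting $c^\ast=\min(P\cap Q)$. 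So this case does not occur, and in every remaining case $y$ is periodic, which proves the lemma.

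The main obstacle is the verification that $F$ is well defined and forward‑deterministic away from the common cuts — in particular that when exactly one of the two blocks ends, left permutativity pins down the letter of the new block — together with the observation that the single non‑deterministic situation (both blocks ending at once) is precisely a common cut, which is what lets the "bi‑infinite orbit of a finite‑state map is periodic" principle do its work and, in the bounded‑below case, produce a common cut below $c^\ast$. The mirror‑image reduction handling right permutative morphisms, and the orbit bookkeeping underlying Lemma~\ref{lem:uniqueorbit} in the case $P=Q$, are routine once the left permutative case is in place.
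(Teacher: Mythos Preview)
Your proof is correct and follows essentially the same strategy as the paper's: encode the relative position of the two tilings by a finite state, use left permutativity to get forward determinism, and invoke pigeonhole to force periodicity. The differences are cosmetic: you index states by every position $n$ (the pair $((a_n,\alpha_n),(b_n,\beta_n))$) and phrase the argument via orbits of a partial self-map on a finite set, whereas the paper indexes by the cuts of one representation and records only the other block's letter together with the offset; and in the case of a least common cut you derive a contradiction (the periodic left tail manufactures a smaller common cut), whereas the paper simply reads off periodicity of $x$ directly from the repeated state.
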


\begin{proof}
Let $(k,x), (k',x')$ be distinct centered $\sigma$-representations of~$y$. 
Assume that $\sigma$ is left permutative, the right permutative case being symmetric.
Then $\sigma$ is injective on $\mathcal{A}^\mathbb{N}$, i.e., on right infinite one-sided sequences.

Suppose that $C_\sigma(k,x) \cap C_\sigma(k',x') \ne \emptyset$. 
Let $ h$ be in the intersection, and let $ \ell,\ell'$ be such that $ h$ is the $\ell$-th $\sigma$-cutting point of $(k,x)$, as well as the $\ell'$-th $\sigma$-cutting point of $(k',x')$.
By the injectivity of $\sigma$ on~$\mathcal{A}^\mathbb{N}$, we have $x_{[ \ell,\infty)} = x'_{[ \ell',\infty)}$, and all cutting points from $ h$ onwards are common. 
If all $\sigma$-cutting points are common then we are done, since this implies that $x = T^{\ell- \ell'}(x')$ and hence $y$ is periodic by Lemma~\ref{lem:uniqueorbit}. Otherwise let $H$ be the smallest element of $C_\sigma(k,x) \cap C_\sigma(k',x')$, and 
let $L,L'$  be such that $H$ is  the $L$-th $\sigma$-cutting point of $(k,x)$ and the $L'$-th $\sigma$-cutting point of $(k',x')$.
Finally if $C_\sigma(k,x) \cap C_\sigma(k',x') = \emptyset$, then we set $H = L = L' = \infty$. 

Let now $h   \in C_\sigma(k,x) \backslash C_\sigma(k',x') $ be the $\ell$-th $\sigma$-cutting point of $(k,x)$, and let $\ell'$ be such that $h$ lies (strictly) between the $\ell'$-th and $(\ell'{+}1)$-st $\sigma$-cutting points of $(k',x')$.
Let $h'$ be the $\ell'$-th $\sigma$-cutting point of $(k',x')$, see Figure~\ref{f:permutative}.
Then the first letter of $\sigma(x_\ell)$ is the $(h{-}h'{+}1)$-st letter of $\sigma(x'_{\ell'})$, and left permutativity implies that $x_\ell$ is determined by $x'_{\ell'}$ and the difference $h-h'$. 
Inductively, we obtain that for each~$\ell<L$, $x_{[\ell,L)}$ and $x'_{[\ell',L')}$ are determined by $x'_{\ell'}$ and $h-h'$.
Since there are only finitely many possibilities for $x'_{\ell'}$ and $h-h'$, we have an infinite set of indices $\ell < L$ having all the same $x'_{\ell'}$ and $h-h'$. 
At all these~$\ell$'s, we get the same word $x_{[\ell,\infty)}$, which is periodic of the form  $x_{[\ell,\infty)} = x_{[\ell_0,\ell_1)}  x_{[\ell_0,\ell_1)} \ldots$ whenever $\ell_0, \ell_1$ are arbitrary indices in this set. 
Therefore $x$, and thus~$y$, are periodic.
\end{proof}

\begin{figure}[ht]
\begin{tikzpicture}
\draw(-1.5,0)--(3.5,0) (-1.5,.65)--(3.5,.65) (0,.45)--(0,1.05) (-1.2,.2)--(-1.2,-.4) (1.1,.2)--(1.1,-.4);
\node[above] at (1.1,.65){$\sigma(x_\ell) \quad \cdots$};
\node[above] at (-.6,0){$y_{h'}\cdots\vphantom{y_{|x'_{\ell'}}}$};
\node[above] at (.55,0){$y_h\cdots\vphantom{y_{|x'_{\ell'}}}$};
\node[above] at (2.25,0){$y_{h'+|\sigma(x'_{\ell'})|}\cdots$};
\node[below] at (-.05,0){$\sigma(x'_{\ell'})$};
\node[below] at (2.25,0){$\cdots\vphantom{(x'_{\ell'}}$};
\end{tikzpicture}
\caption{In the proof of Lemma~\ref{l:permutative}, $x_\ell$ is determined by $y_h$ since $\sigma$ is left permutative, thus $x_\ell$ is determined by $x'_{\ell'}$ and $h-h'$.} \label{f:permutative}
\end{figure}
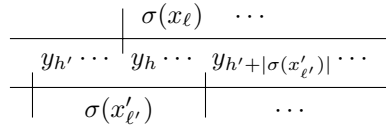
 
For the following lemma, we use that rotational conjugation only shifts the $\sigma$-cutting points. 

\begin{lemma} \label{l:conjugaterecognizable}
Let $\sigma, \tilde{\sigma}:\, \mathcal{A} \to \mathcal{B}^+$ be rotationally conjugate morphisms.
Then, for each $y \in \mathcal{B}^\mathbb{Z}$, the number of centered $\sigma$-representations of~$y$ is equal to the number of centered $\tilde{\sigma}$-representations~of~$y$.
\end{lemma}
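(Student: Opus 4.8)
The plan is to set up an explicit bijection between centered $\sigma$-representations and centered $\tilde\sigma$-representations of a given $y\in\mathcal B^\mathbb Z$, exploiting the fact that rotational conjugacy is precisely the statement that $\sigma$ and $\tilde\sigma$ produce the same two-sided tilings up to a global shift of all cutting points by $|w|$. Concretely, assume first that $\sigma(a)w = w\tilde\sigma(a)$ for all $a\in\mathcal A$ (the other case $w\sigma(a)=\tilde\sigma(a)w$ being symmetric). The key identity to establish is that for every $x\in\mathcal A^\mathbb Z$ one has $\sigma(x) = T^{|w|}\tilde\sigma(x)$ in the sense that the two bi-infinite words agree after the appropriate shift; this follows by concatenating the relations $\sigma(a)w=w\tilde\sigma(a)$ along the sequence $x$, so that the word $w$ "slides through" and $\sigma(x_{[0,n)})\,w = w\,\tilde\sigma(x_{[0,n)})$ for all $n\ge 0$, and symmetrically for negative indices. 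The point to be careful about is that $w$ is a prefix of a power of each $\sigma(a)$, so $|w|$ may exceed $|\sigma(a)|$; nonetheless the telescoping identity still makes sense at the level of bi-infinite sequences, and I would phrase it as an equality $\bigcup_k T^k\sigma(\mathcal A^\mathbb Z) = \bigcup_k T^k\tilde\sigma(\mathcal A^\mathbb Z)$ together with a bookkeeping of the shift.

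Given this, I would define the map on representations as follows: if $(k,x)$ is a $\sigma$-representation of $y$, i.e.\ $y = T^k\sigma(x)$, then $y = T^{k}T^{|w|}\tilde\sigma(x) = T^{k+|w|}\tilde\sigma(x)$, so $(k+|w|, x)$ is a $\tilde\sigma$-representation of $y$. This gives a bijection between the orbit-class of $\sigma$-representations and the orbit-class of $\tilde\sigma$-representations (using Remark~\ref{rem:sigma}(1), which says each class contains exactly one centered representative and the classes are parametrized by $(\ell\mapsto T^\ell x)$ together with the cumulative length shift). Since $C_{\tilde\sigma}(k+|w|,x)$ is just $C_\sigma(k,x)$ shifted by $-|w|$ — this is the assertion "rotational conjugation only shifts the $\sigma$-cutting points" that the paper flags before the lemma — the induced map on orbit-classes is a bijection, and hence restricting to the unique centered representatives in each class gives a bijection between centered $\sigma$-representations of $y$ and centered $\tilde\sigma$-representations of $y$. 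The two sets therefore have equal cardinality.

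The main obstacle, and the only place requiring genuine care, is the telescoping/sliding argument in the case $|w| \ge \min_a |\sigma(a)|$: one must check that $\sigma(x_{[i,j)})\,w = w\,\tilde\sigma(x_{[i,j)})$ really does hold for all $i<j$ by induction from the single-letter relation, and then pass to the bi-infinite limit to get a clean equality $T^{-|w|}\sigma(x) = \tilde\sigma(x)$ of points in $\mathcal B^\mathbb Z$. Once that identity is in hand, everything else is bookkeeping: translating a $\sigma$-representation $(k,x)$ into the $\tilde\sigma$-representation $(k+|w|,x)$, checking this is a bijection of the full (non-centered) representation sets that commutes with the orbit-reindexing from Remark~\ref{rem:sigma}(1), and concluding that it descends to a bijection of centered representations. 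I would also remark that injectivity and surjectivity of $x\mapsto x$ at the level of the underlying sequences is trivial, so no hypothesis beyond rotational conjugacy is needed — in particular $\sigma$ need not be injective on letters here.
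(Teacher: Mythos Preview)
Your proof is correct and follows the same approach as the paper's: establish that $(k,x)$ is a $\sigma$-representation of $y$ if and only if $(k\pm|w|,x)$ is a $\tilde\sigma$-representation, then pass to centered representatives via the orbit-class bookkeeping of Remark~\ref{rem:sigma}(1). One harmless sign slip: under your assumption $\sigma(a)w = w\tilde\sigma(a)$, the telescoped identity actually yields $\sigma(x) = T^{-|w|}\tilde\sigma(x)$ (not $T^{|w|}$), so the bijection should read $(k,x)\mapsto(k-|w|,x)$; the paper works with the other case $w\sigma(a)=\tilde\sigma(a)w$, where the shift is $+|w|$.
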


\begin{proof}
Assume w.l.o.g.\ that $w \sigma(a) = \tilde{\sigma}(a) w$ for all $a \in \mathcal{A}$, and let $y \in \mathcal{B}^\mathbb{Z}$. 
Then $(k,x)$ is a $\sigma$-representation of~$y$ if and only if $(k+|w|,x)$ is a $\tilde{\sigma}$-representation of~$y$.
\end{proof}

If $\mathcal{A} = \{a,b\}$ and $|\sigma(\mathcal{A}^\mathbb{Z})| \ge 2$, then $\sigma$ is rotationally conjugate to a left permutative morphism~$\tilde{\sigma}$. 
Indeed, if $\tilde{\sigma}(a)$ and $\tilde{\sigma}(b)$ start with the same letter for all morphisms $\tilde{\sigma}$ that are conjugate to~$\sigma$, then we have $\sigma(aa\cdots) = \sigma(bb\cdots)$, thus $\sigma(a)$ and $\sigma(b)$ are powers of the same word by the Fine-Wilf theorem \cite{Fine:65} and, hence, $|\sigma(\mathcal{A}^\mathbb{Z})| = 1$.
If $\sigma(\mathcal{A}^\mathbb{Z})$ contains an aperiodic point, then it is an infinite set.
Together with Lemmas~\ref{l:permutative} and~\ref{l:conjugaterecognizable}, this implies that $\sigma$ is fully recognizable for aperiodic points when $|\mathcal{A}| = 2$.
We give an alternative proof in Section~\ref{sec:proof-theor-reft:r}.

\subsection{Composing morphisms}
We now relate the recognizability properties of the composition of two morphisms $\tau \circ \sigma$ to those of the single morphisms $\tau$ and $\sigma$. 
Here, we write $\tau \sigma$ for $\tau \circ \sigma$. 

\begin{lemma}\label{lem:tele}
Let $\sigma:\, \mathcal{A} \to \mathcal{B}^+$ and $\tau:\, \mathcal{B}\to\mathcal{C}^+$ be morphisms, $(X,T)$ a shift with $X \subseteq \mathcal{A}^\mathbb{Z}$, and $Y = \bigcup_{k\in\mathbb{Z}} T^k \sigma(X)$. 
Then $\tau \sigma$ is recognizable in~$X$ if and only if $\sigma$ is recognizable in~$X$ and $\tau$ is recognizable in~$Y$. 
If $\sigma$ is recognizable in~$X$ for aperiodic points and $\tau$ is recognizable in~$Y$ for aperiodic points, then $\tau \sigma$ is recognizable in~$X$ for aperiodic points.
If $\tau \sigma$ is recognizable in~$X$ for aperiodic points, then $\tau$ is recognizable in~$Y$ for aperiodic points. 
\end{lemma}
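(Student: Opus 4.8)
The statement to prove, Lemma~\ref{lem:tele}, has three parts, and I would handle them by carefully unwinding the definition of $\sigma$-representation under composition. The key bookkeeping observation is that if $(k,x)$ is a centered $\tau\sigma$-representation of $z \in \mathcal{C}^\mathbb{Z}$, so that $z = T^k (\tau\sigma)(x)$ with $0 \le k < |(\tau\sigma)(x_0)| = |\tau(\sigma(x_0))|$, then I can split $k = k_\tau + |\tau(\sigma(x)_{[0,j)})|$ where $j$ is chosen so that $0 \le k_\tau < |\tau(\sigma(x)_j)|$; here $\sigma(x)$ denotes the point of $\mathcal{B}^\mathbb{Z}$ obtained by applying $\sigma$ to $x$ (with a suitable indexing convention, say $\sigma(x)_0$ is the first letter of $\sigma(x_0)$). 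Writing $y = T^j \sigma(x) \in Y$, the pair $(k_\tau, y)$ is then a centered $\tau$-representation of $z$, and $(j, x)$ — or rather the centered $\sigma$-representation in its shift class, which by Remark~\ref{rem:sigma}(1) is unique — records how $y$ sits over $x$. Conversely, given a centered $\tau$-representation $(k_\tau, y)$ of $z$ with $y \in Y$ and a (centered) $\sigma$-representation $(j', x)$ of $y$ with $x \in X$, one recomposes to get a $\tau\sigma$-representation of $z$ in $X$, whose centered representative is determined. So the plan is: \textbf{(i)} make this correspondence between centered $\tau\sigma$-representations of $z$ in $X$ and pairs (centered $\tau$-representation of $z$ in $Y$, centered $\sigma$-representation of the intermediate point in $X$) completely explicit and check it is a bijection; then \textbf{(ii)} read off each of the three assertions.

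For the first assertion (the ``if and only if'' in the non-aperiodic, everywhere-recognizable case): if $\sigma$ is recognizable in $X$ and $\tau$ is recognizable in $Y$, then in the correspondence above each $z$ has at most one centered $\tau$-representation $(k_\tau, y)$ in $Y$, and then at most one centered $\sigma$-representation of that specific $y$ in $X$, hence at most one centered $\tau\sigma$-representation in $X$. Conversely, if $\tau\sigma$ is recognizable in $X$: to see $\sigma$ is recognizable in $X$, suppose $x, x' \in X$ and $\sigma(x), \sigma(x')$ have a common centered-representation obstruction, i.e. $T^m \sigma(x) = \sigma(x')$ with the two centered $\sigma$-representations distinct; apply $\tau$ to get $T^{?}(\tau\sigma)(x) = (\tau\sigma)(x')$ and check this yields two distinct centered $\tau\sigma$-representations in $X$ (the index shift $m$ is converted into a shift in $\mathcal{C}^\mathbb{Z}$ by the length formula, and one must verify the two $\tau\sigma$-representations are genuinely different — this uses that the original $\sigma$-representations differed). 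To see $\tau$ is recognizable in $Y$, take $y, y' \in Y$, say $y = T^j \sigma(x)$, $y' = T^{j'}\sigma(x')$, with $T^n \tau(y) = \tau(y')$ giving distinct centered $\tau$-representations; recompose through $\sigma$ to produce two distinct centered $\tau\sigma$-representations of the common point in $\mathcal{C}^\mathbb{Z}$, both in $X$.

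The second and third assertions are the aperiodic analogues and follow the same two directions of the correspondence, with one extra point to watch: \emph{which} points are aperiodic. If $z = (\tau\sigma)(x)$ is aperiodic, then the intermediate point $y$ with $z = T^{k_\tau}\tau(y)$ must be aperiodic (a periodic $y$ would force $\tau(y)$, hence $z$, periodic), so the hypothesis ``$\tau$ recognizable in $Y$ for aperiodic points'' applies to pin down $(k_\tau, y)$; and then, again, $\sigma(x)$ aperiodic (it equals a shift of $\tau(y)$... wait — rather $y$ is a shift of $\sigma(x)$, so $\sigma(x)$ aperiodic) lets ``$\sigma$ recognizable in $X$ for aperiodic points'' pin down the centered $\sigma$-representation of $y$ in $X$. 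For the third assertion, given $\tau\sigma$ recognizable in $X$ for aperiodic points and $\tau(y)$ aperiodic with $y \in Y$, write $y = T^j\sigma(x)$ and note $(\tau\sigma)(x)$ is then aperiodic, so recognizability of $\tau\sigma$ in $X$ for aperiodic points applies and, via the correspondence, forces uniqueness of the centered $\tau$-representation of $\tau(y)$ in $Y$.

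\textbf{Main obstacle.} The routine-looking but genuinely delicate step is (i): managing the three-way conversion between the shift index $k$ for $\tau\sigma$, the index $k_\tau$ for $\tau$, and the index $j$ for $\sigma$, and in particular verifying that ``distinct centered representations'' is preserved in \emph{both} directions of each reduction. It is easy to lose a representation or to mistakenly identify two that are actually distinct because an index shift at the $\mathcal{B}$-level collapses after applying $\tau$ only when $\tau(y)$ is periodic — which is exactly why the aperiodicity hypothesis cannot simply be dropped from the forward implications, and why only the ``$\tau$ recognizable in $Y$'' conclusion (not ``$\sigma$ recognizable in $X$'') survives in the aperiodic version of the third assertion. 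I would therefore state the index dictionary once, precisely, as a preliminary paragraph, and then invoke it uniformly; the rest is bookkeeping with the length function $\ell \mapsto |\sigma(x_{[0,\ell)})|$ and Remark~\ref{rem:sigma}.
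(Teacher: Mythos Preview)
Your proposal is correct and follows essentially the same approach as the paper: first set up the bijection between centered $\tau\sigma$-representations $(m,x)$ of $z$ in $X$ and pairs consisting of a centered $\sigma$-representation $(k,x)$ and a centered $\tau$-representation $(\ell,T^k\sigma(x))$ of $z$, then read off each implication by chasing distinct representations through this bijection. You have also correctly isolated the one subtle point, namely that the aperiodic version cannot recover recognizability of $\sigma$ because $\tau(y)$ may be periodic while $y$ is aperiodic, which is exactly the parenthetical caveat the paper records.
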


\begin{proof}
We first establish a bijection between centered $\tau \sigma$-representations $(m,x)$ and pairs of a centered $\sigma$-representation $(k,x)$ and a centered $\tau$-representation $(\ell,T^k \sigma(x))$.
Given $(m,x)$, let $z = T^m \tau \sigma(x)$. 
Then $z_{[-m,0)}$ is a prefix of $\tau \sigma(x_0)$ and there are unique $p \in \mathcal{B}^*$,  $a \in \mathcal{B}$, such that $pa$ is a prefix of $\sigma(x_0)$ and $|\tau(p)| \le m < |\tau(pa)|$.
Let $k = |p|$, $\ell = m-|\tau(p)|$, and $y = T^k\sigma(x)$. 
Then $0 \le k < |\sigma(x_0)|$ and $0 \le \ell < |\tau(a)| = |\tau(y_0)|$, thus $(k,x)$ is a centered $\sigma$-representation and $(\ell,y)$ is a centered $\tau$-representation.
On the other hand, given $(k,x)$ and $(\ell,y)$ with $y = T^k \sigma(x)$, then we find the centered $\tau \sigma$-representation $(m,x)$ from above by setting $m = |\tau(y_{[-k,0)})| + \ell$.
Note that $T^m \tau \sigma(x) = T^\ell \tau(T^k\sigma(x))$.

Suppose that $z \in \mathcal{C}^\mathbb{Z}$ has two centered $\tau \sigma$-representations $(m,x) \ne (m',x')$ in~$X$. 
Let $(k,x)$, $(\ell,T^k \sigma(x))$, $(k',x')$ and $(\ell',T^{k'} \sigma(x'))$ be the associated centered $\sigma$- and $\tau$-representations.
Then $(\ell,T^k \sigma(x))$ and $(\ell',T^{k'} \sigma(x'))$ are centered $\tau$-representations of~$z$ in~$Y$.
If they are equal, then we have $(k,x) \ne (k',x')$ and $y:=T^k \sigma(x) = T^{k'} \sigma(x')$; thus $y$  has two centered $\sigma$-representations in~$X$.
Hence, if $\tau \sigma$ is not recognizable in~$X$, then $\sigma$ is not recognizable in~$X$ or $\tau$ is not recognizable in~$Y$.
As $y$ is aperiodic when $z$ is aperiodic, this also holds with recognizability for aperiodic~points.

Next suppose that $y \in \mathcal{B}^\mathbb{Z}$ has two centered $\sigma$-representations $(k,x) \ne (k',x')$ in~$X$. 
Let $m = |\tau(y_{[-k,0)})|$ and $m' = |\tau(y_{[-k',0)})|$. 
Then $(m,x) \ne (m',x')$ are two centered $\tau \sigma$-representations of $\tau(y)$. 
Hence, if $\sigma$ is not recognizable in~$X$, then $\tau \sigma$ is not recognizable in~$X$. 
(We cannot conclude that this holds for aperiodic points since $\tau(y)$ can be periodic when $y$ is aperiodic.)

Finally, suppose that $z \in \mathcal{C}^\mathbb{Z}$ has two centered $\tau$-representations $(\ell,y) \ne (\ell',y')$ in~$Y$. 
Let $(k,x)$ and $(k',x')$ be centered $\sigma$-representations in~$X$ of $y$ and~$y'$, respectively.
Let $m = |\tau(y_{[-k,0)})|+\ell$ and $m' = |\tau(y'_{[-k',0)})|+\ell'$. 
Then $(m,x)$ and $(m',x')$ are centered $\tau \sigma$-representations of~$z$.
We have $(m,x) \ne (m,x')$ since $x = x'$ implies that $m \ne m'$ by the first paragraph of the proof.
Therefore, non-recognizability of~$\tau$ in~$Y$ implies non-recognizability of $\tau \sigma$ in~$X$.
This relation also holds with recognizability for aperiodic points. 
\end{proof}

\subsection{Injective and non-injective morphisms}
The following two lemmas are inspired by~\cite[Case~(1) of the proof of Theorem~1]{Down:2008}.
They are combinatorial interpretations of one of their arguments, where the existence of a common cut means that the morphism is not injective on two-sided sequences, and thus not injective on right or on left infinite sequences. 
If the image of one letter is a concatenation of images of other letters, then we can remove the letter from the alphabet. 
Otherwise, we can recode by a morphism, on the same alphabet, where the image of one letter is shorter than in the original morphism and the other images do not change.
In both cases, the total length of the morphism decreases. 

\begin{lemma} \label{l:reducetotallength}
Let $\sigma:\, \mathcal{A} \to \mathcal{B}^+$ be a morphism that is not injective on~$\mathcal{A}^\mathbb{N}$.
Then we have $\sigma = \tilde{\sigma} \tau$ with morphisms $\tau:\, \mathcal{A} \to \tilde{\mathcal{A}}^+$, $\tilde{\sigma}:\, \tilde{\mathcal{A}} \to \mathcal{B}^+$, such that
\begin{enumerate}
\renewcommand{\theenumi}{\roman{enumi}}
\item
 $|\tilde{\mathcal{A}}| < |\mathcal{A}|$, or $|\tilde{\mathcal{A}}| = |\mathcal{A}|$ and $\tau$ is injective on~$\mathcal{A}^\mathbb{N}$, 
\item
$\interleave\tilde{\sigma}\interleave < \interleave\sigma\interleave$, and 
\item 
each $\tilde{\sigma}(a)$, $a \in \tilde{\mathcal{A}}$, is a prefix of some $\sigma(b)$, $b \in \mathcal{A}$. 
\end{enumerate}
\end{lemma}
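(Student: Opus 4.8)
The plan is to read off, from the failure of injectivity, a pair of right-infinite words with equal $\sigma$-image that already disagree in their first letters, and then to peel off from the image of one of these letters the longest prefix that is a concatenation of images of single letters.

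Concretely, I would first use that $\sigma$ is not injective on $\mathcal{A}^\mathbb{N}$ to choose $x \ne x'$ in $\mathcal{A}^\mathbb{N}$ with $\sigma(x) = \sigma(x')$. Deleting the longest common prefix of $x$ and $x'$ and cancelling its image on the left, I may assume $x_0 \ne x'_0$; swapping $x$ and $x'$ if necessary, I may also assume $\sigma(x_0)$ is a prefix of $\sigma(x'_0)$. Since $\sigma(x_0)\sigma(x_1)\cdots = \sigma(x'_0)\sigma(x'_1)\cdots$, every $\sigma(x_{[0,r)})$ is comparable with $\sigma(x'_0)$ for the prefix order, so I let $r \ge 1$ be maximal with $\sigma(x_{[0,r)})$ a prefix of $\sigma(x'_0)$; this is well defined because $\sigma(x_0)$ is a prefix of $\sigma(x'_0)$ and $\sigma$ is non-erasing. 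A short bookkeeping argument, using non-erasingness and $x_0 \ne x'_0$, shows that the letter $x'_0$ does not occur in $x_{[0,r)}$.

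Then I distinguish two cases. If $\sigma(x_{[0,r)}) = \sigma(x'_0)$, I set $\tilde{\mathcal{A}} = \mathcal{A} \setminus \{x'_0\}$, let $\tau$ fix every $a \ne x'_0$ and send $x'_0$ to $x_{[0,r)} \in \tilde{\mathcal{A}}^+$, and let $\tilde{\sigma} = \sigma|_{\tilde{\mathcal{A}}}$; here $|\tilde{\mathcal{A}}| < |\mathcal{A}|$, so the first disjunct of~(i) holds (no injectivity of $\tau$ is needed). If instead $\sigma(x_{[0,r)})$ is a proper prefix of $\sigma(x'_0)$, then by maximality of $r$ together with comparability the word $\sigma(x'_0)$ is a proper prefix of $\sigma(x_{[0,r+1)}) = \sigma(x_{[0,r)})\,\sigma(x_r)$, so writing $\sigma(x'_0) = \sigma(x_{[0,r)})\, q$ one gets that $q$ is a proper non-empty prefix of $\sigma(x_r)$; in this case I keep $\tilde{\mathcal{A}} = \mathcal{A}$, let $\tau$ fix every $a \ne x'_0$ and send $x'_0$ to $w\,x'_0$ with $w := x_{[0,r)}$, and let $\tilde{\sigma}$ agree with $\sigma$ off $x'_0$ and send $x'_0$ to $q$. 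In both cases one checks readily, using $x'_0 \notin x_{[0,r)}$, that $\tilde{\sigma}\tau = \sigma$, that (iii) holds (as $\tilde{\sigma}(a) = \sigma(a)$ for all relevant $a$, while $q$ is a prefix of $\sigma(x_r)$), and that $\interleave\tilde{\sigma}\interleave < \interleave\sigma\interleave$ (the total length drops by $|\sigma(x'_0)|$, respectively by $|w| \ge 1$).

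The one genuinely non-formal step, and the part I expect to be the main obstacle, is condition~(i) in the second case: that $\tau$ is injective on $\mathcal{A}^\mathbb{N}$. Here I would avoid a letter-by-letter prefix comparison and argue structurally. Since $w$ contains no occurrence of the letter $x'_0$ and $\tau$ fixes every letter other than $x'_0$, the occurrences of $x'_0$ in $\tau(s)$ are in position-preserving bijection with the occurrences of $x'_0$ in $s$; writing $s = v^{(0)} x'_0 v^{(1)} x'_0 v^{(2)} \cdots$ with each $v^{(i)} \in (\mathcal{A}\setminus\{x'_0\})^*$ one obtains $\tau(s) = v^{(0)} w\, x'_0\, v^{(1)} w\, x'_0 \cdots$. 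Hence $s$ is recovered from $\tau(s)$ by the deterministic operation ``locate each occurrence of the letter $x'_0$ and delete the $|w|$ letters immediately preceding it, which necessarily spell $w$''; therefore $\tau(s) = \tau(s')$ forces $s = s'$. The remaining verifications — non-erasingness of $\tau$ and $\tilde{\sigma}$ and the bookkeeping that $x'_0$ does not occur in $x_{[0,r)}$ — are routine.
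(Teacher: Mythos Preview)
Your proof is correct and follows essentially the same construction as the paper's (split on whether $\sigma(x'_0)$ equals $\sigma(x_{[0,r)})$ or properly extends it, and define $\tau,\tilde{\sigma}$ accordingly); you in fact supply the details the paper leaves implicit, namely why $x'_0 \notin x_{[0,r)}$ and why $\tau$ is injective on $\mathcal{A}^\mathbb{N}$ in the second case. One inconsequential slip: in case~2 the total length drops by $|\sigma(w)|$ rather than $|w|$, but since $|\sigma(w)| \ge |w| \ge 1$ your conclusion stands.
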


\begin{proof}
To prove the claim, choose $x, x' \in \mathcal{A}^\mathbb{N}$ with $\sigma(x) = \sigma(x')$ and $x_k \ne x'_k$ for some $k \ge 0$.
Assume that $x_{[0,k)} = x'_{[0,k)}$ and, w.l.o.g., $|\sigma(x'_k)| > |\sigma(x_k)|$.
Then we have $\sigma(x'_k) = \sigma(x_{[k,\ell)})\, v$ for some $\ell > k$ and some prefix $v$ of $\sigma(x_\ell)$ with $v \ne \sigma(x_\ell)$. 
If $v$ is empty, then the statement of the lemma holds with $\tilde{\mathcal{A}} = \mathcal{A} \setminus \{x'_k\}$, $\tilde{\sigma}$~being the restriction of $\sigma$ to $\tilde{\mathcal{A}}$, and $\tau$ defined by $\tau(x'_k) = x_{[k,\ell)}$, $\tau(a) = a$ otherwise. 
If $v$ is non-empty, then we can take $\tilde{\mathcal{A}} = \mathcal{A}$, $\tilde{\sigma}(x'_k) = v$, $\tilde{\sigma}(a) = \sigma(a)$ otherwise, $\tau(x'_k) = x_{[k,\ell)} x'_k$, $\tau(a) = a$ otherwise; in this case, $\tau$ is injective on~$\mathcal{A}^\mathbb{N}$.
\end{proof}

For the proof of Theorem~\ref{t:recognizable} and in Section~\ref{main_S_adic-2}, we need the following consequences of Lemma~\ref{l:reducetotallength}.

\begin{lemma} \label{l:reducealphabet}
Let $\sigma:\, \mathcal{A} \to \mathcal{B}^+$ be a morphism that is not injective on~$\mathcal{A}^\mathbb{Z}$.
Then we have $\sigma = \tilde{\sigma} \tau$ with morphisms $\tau:\, \mathcal{A} \to \tilde{\mathcal{A}}^+$, $\tilde{\sigma}:\, \tilde{\mathcal{A}} \to \mathcal{B}^+$, such that $|\tilde{\mathcal{A}}| < |\mathcal{A}|$ and $\interleave\tilde{\sigma}\interleave < \interleave\sigma\interleave$.
If $\sigma$ is not injective on~$\mathcal{A}^\mathbb{N}$, then we can choose $\tilde{\sigma}$ such that each $\tilde{\sigma}(a)$, $a \in \tilde{\mathcal{A}}$, is a prefix of some $\sigma(b)$, $b \in \mathcal{A}$. 
\end{lemma}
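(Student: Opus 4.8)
The statement is essentially a two-sided reformulation of Lemma~\ref{l:reducetotallength}, so the natural plan is to reduce Lemma~\ref{l:reducealphabet} to Lemma~\ref{l:reducetotallength} by turning a failure of injectivity on $\mathcal{A}^\mathbb{Z}$ into a failure of injectivity on a one-sided space. First I would take distinct $x, x' \in \mathcal{A}^\mathbb{Z}$ with $\sigma(x) = \sigma(x')$. The key observation is that non-injectivity on two-sided sequences forces a \emph{common cut}: there exist indices $i,i'$ such that the cutting point $|\sigma(x_{[0,i)})|$ of $x$ coincides with the cutting point $|\sigma(x'_{[0,i')})|$ of $x'$ in the common image $\sigma(x)=\sigma(x')$. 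Indeed, since $x\ne x'$, the sets of cutting points $C_\sigma(0,x)$ and $C_\sigma(0,x')$ cannot be equal (equal cutting-point sets plus equal images would force $x=x'$, reading off letters by injectivity-free block comparison—here one uses that consecutive cutting points determine the length, hence the letter is forced only if the block-to-letter map is injective, which it need not be; so more carefully: if the cutting-point sets are equal then the two decompositions of $\sigma(x)$ into blocks $\sigma(a)$ agree as sequences of blocks, and since distinct letters can have the same image, $x$ and $x'$ could still differ—so I must be slightly more careful here). Let me instead argue directly: pick the index set where $x$ and $x'$ first differ and chase the cutting points leftward or rightward to land on a genuine common cut, or else there is none and we handle that degenerate situation separately.

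Concretely, the cleaner route: if $x$ and $x'$ share a cutting point, say $|\sigma(x_{[0,i)})| = |\sigma(x'_{[0,i')})| =: c$ (or with negative indices, $-|\sigma(x_{[i,0)})|$), then shifting both by $T^c\sigma(\cdot)$ we obtain $\tilde{x} = T^i(x)$, $\tilde{x}' = T^{i'}(x')$ with $\sigma(\tilde{x}) = \sigma(\tilde{x}')$ and a common cut at position~$0$; then the right-infinite halves $\tilde{x}_{[0,\infty)}$ and $\tilde{x}'_{[0,\infty)}$ have equal image under $\sigma$, and they are distinct (otherwise combine with the left halves to contradict $x\ne x'$, using the common cut to glue). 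This is exactly the hypothesis of Lemma~\ref{l:reducetotallength} applied to the one-sided morphism, giving $\sigma = \tilde{\sigma}\tau$ with $|\tilde{\mathcal{A}}| < |\mathcal{A}|$ or $|\tilde{\mathcal{A}}| = |\mathcal{A}|$ with $\tau$ injective on $\mathcal{A}^\mathbb{N}$, together with $\interleave\tilde\sigma\interleave < \interleave\sigma\interleave$. In the second case, $\tau$ injective on $\mathcal{A}^\mathbb{N}$ but $\sigma = \tilde\sigma\tau$ not injective on $\mathcal{A}^\mathbb{Z}$ forces $\tilde\sigma$ not injective on $\tilde{\mathcal{A}}^\mathbb{Z}$, so I iterate the construction; since $\interleave\cdot\interleave$ strictly decreases at each step, this terminates, and it can only terminate by reaching the alphabet-shrinking alternative $|\tilde{\mathcal{A}}| < |\mathcal{A}|$. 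Composing the $\tau$'s and keeping the final $\tilde\sigma$ yields the claim, with $\interleave\tilde\sigma\interleave < \interleave\sigma\interleave$ preserved throughout. The final sentence of the lemma (prefix property when $\sigma$ is not injective on $\mathcal{A}^\mathbb{N}$) is immediate: then we are directly in the one-sided case, apply Lemma~\ref{l:reducetotallength} once with its conclusion~(iii), and if needed iterate—each iterate again has its image letters being prefixes, and prefixes of prefixes are prefixes.

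**Main obstacle.** The delicate point is establishing that a failure of injectivity on $\mathcal{A}^\mathbb{Z}$ yields a \emph{common cut}, i.e.\ two $\sigma$-decompositions of the same bi-infinite word whose cutting-point sets actually intersect. If $C_\sigma(0,x)\cap C_\sigma(0,x') = \emptyset$ for \emph{every} witnessing pair, then the two tilings of $\sigma(x)$ by translates of $\{\sigma(a)\}$ are "totally staggered"; one must show this situation either cannot occur when $x\ne x'$, or already implies a length/periodicity obstruction that still lets one remove a letter. I expect the resolution is that a totally staggered pair of tilings forces $\sigma(x)$ to be periodic with a period dividing a difference of block-lengths, and one extracts from this a relation $\sigma(a) = \sigma(b)$ (so $|\tilde{\mathcal{A}}| < |\mathcal{A}|$ trivially) or a shorter recoding; handling this degenerate case cleanly, rather than the generic common-cut case, is where the real work lies. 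I would structure the proof to dispatch the common-cut case via Lemma~\ref{l:reducetotallength} and then argue the no-common-cut case separately, likely via the Fine–Wilf theorem as used elsewhere in the paper.
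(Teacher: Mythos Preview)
Your approach is the paper's approach---reduce to Lemma~\ref{l:reducetotallength} and iterate---but you have invented an obstacle that does not exist. When $\sigma$ is extended to $\mathcal{A}^\mathbb{Z}$ in the standard way, position~$0$ is by definition the $0$-th cutting point of \emph{every} representation $(0,x)$: the image $\sigma(x)$ is built so that $\sigma(x)_{[0,\infty)}=\sigma(x_{[0,\infty)})$ and $\sigma(x)_{(-\infty,0)}=\sigma(x_{(-\infty,-1]})$. Hence if $\sigma(x)=\sigma(x')$ with $x\ne x'$, then already $\sigma(x_{[0,\infty)})=\sigma(x'_{[0,\infty)})$ and $\sigma(x_{(-\infty,-1]})=\sigma(x'_{(-\infty,-1]})$, and since $x\ne x'$, at least one of these pairs of half-sequences is unequal. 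This is the paper's one-line reduction: $\sigma$ not injective on $\mathcal{A}^\mathbb{Z}$ implies $\sigma$ not injective on $\mathcal{A}^\mathbb{N}$ or on $\mathcal{A}^{-\mathbb{N}}$. Your entire ``no common cut'' case, the Fine--Wilf detour, and the worry about staggered tilings are unnecessary. (Your parenthetical ``otherwise combine with the left halves to contradict $x\ne x'$'' is also not quite right: equal right halves do not force $x=x'$; they simply put you in the left-infinite case.)

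There is also a small mismatch in your iteration. You assume $\tau$ injective on $\mathcal{A}^\mathbb{N}$ and $\sigma=\tilde\sigma\tau$ not injective on $\mathcal{A}^\mathbb{Z}$, and conclude $\tilde\sigma$ not injective on $\tilde{\mathcal{A}}^\mathbb{Z}$; but this needs $\tau$ injective on $\mathcal{A}^\mathbb{Z}$, not just $\mathcal{A}^\mathbb{N}$. The paper avoids this by staying entirely in the one-sided world: once $\sigma$ is not injective on $\mathcal{A}^\mathbb{N}$, each step yields $\sigma_k=\sigma_{k+1}\tau_{k+1}$ with $\tau_{k+1}$ injective on $\mathcal{A}_k^\mathbb{N}$, hence $\sigma_{k+1}$ not injective on $\mathcal{A}_{k+1}^\mathbb{N}$, and one iterates until the alphabet shrinks. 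The prefix claim then follows exactly as you say, since prefixes of prefixes are prefixes.
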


\begin{proof}
If $\sigma$ is not injective on~$\mathcal{A}^\mathbb{Z}$, then it is not injective on~$\mathcal{A}^\mathbb{N}$ or on~$\mathcal{A}^{-\mathbb{N}}$. 
Assume that $\sigma$ is not injective on~$\mathcal{A}^\mathbb{N}$, the other case being symmetric.
Let $\sigma = \sigma_1 \tau_1$, $\tau_1:\, \mathcal{A} \to \mathcal{A}_1^+$, $\sigma_1:\, \mathcal{A}_1 \to \mathcal{B}^+$, be a decomposition of~$\sigma$ as in Lemma~\ref{l:reducetotallength}.
If $|\mathcal{A}_1| < |\mathcal{A}|$, then we are done. 
Otherwise, we have $|\mathcal{A}_1| = |\mathcal{A}|$ and $\tau_1$ is injective on~$\mathcal{A}^\mathbb{N}$, thus $\sigma_1$ is not injective on~$\mathcal{A}_1^\mathbb{N}$.
Recursively, we write $\sigma_k = \sigma_{k+1} \tau_{k+1}$ with $\tau_{k+1}:\, \mathcal{A}_k \to \mathcal{A}_{k+1}^+$, $\sigma_{k+1}:\, \mathcal{A}_{k+1} \to \mathcal{B}^+$, as in Lemma~\ref{l:reducetotallength}, as long as $|\mathcal{A}_k| = |\mathcal{A}|$.
Since $\interleave\sigma\interleave > \interleave\sigma_1\interleave > \interleave\sigma_2\interleave > \cdots > \interleave\sigma_k\interleave$, we have $|\mathcal{A}_k| < |\mathcal{A}|$ for some $k \ge 1$. 
Let $\tilde{\sigma} = \sigma_k$ and $\tau =  \tau_k \tau_{k-1} \cdots \tau_1$.
Then we have $\sigma = \tilde{\sigma} \tau$, $|\tilde{\mathcal{A}}| < |\mathcal{A}|$ and $\interleave\tilde{\sigma}\interleave < \interleave\sigma\interleave$.
Since, for all $0 \le i < k$, each $\sigma_{i+1}(a)$, $a \in \mathcal{A}_{i+1}$, is a prefix of some $\sigma_i(b)$, $b \in \mathcal{A}$, with $\sigma_0 = \sigma$, we also have that $\tilde{\sigma}(a)$, $a \in \tilde{\mathcal{A}}$, is a prefix of some $\sigma(b)$, $b \in \mathcal{A}$. 
(If $\sigma$ is not injective on~$\mathcal{A}^{-\mathbb{N}}$, then each $\tilde{\sigma}(a)$, $a \in \tilde{\mathcal{A}}$, is a suffix of some $\sigma(b)$, $b \in \mathcal{A}$.)
\end{proof}

\begin{lemma} \label{l:injectiveinvertible}
Let $\sigma:\, \mathcal{A} \to \mathcal{B}^+$ be a morphism.
If $\mathrm{rk}(M_\sigma) = |\mathcal{A}|$, or if $|\mathcal{A}| = 2$ and $|\sigma(\mathcal{A}^\mathbb{Z})| \ge 2$, then $\sigma$ is injective on~$\mathcal{A}^\mathbb{Z}$. 
\end{lemma}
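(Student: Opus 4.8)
The plan is to reduce both cases, by contraposition, to Lemma~\ref{l:reducealphabet}. So assume that $\sigma$ is \emph{not} injective on~$\mathcal{A}^\mathbb{Z}$. Then Lemma~\ref{l:reducealphabet} provides a factorization $\sigma = \tilde{\sigma}\tau$ with $\tau:\, \mathcal{A} \to \tilde{\mathcal{A}}^+$, $\tilde{\sigma}:\, \tilde{\mathcal{A}} \to \mathcal{B}^+$ and $|\tilde{\mathcal{A}}| < |\mathcal{A}|$; this factorization is what will yield a contradiction in each of the two hypotheses, so the morphism must in fact be injective on~$\mathcal{A}^\mathbb{Z}$.

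For the rank hypothesis, I would use that incidence matrices are multiplicative under composition: since $\sigma = \tilde{\sigma}\tau$, reading off the defining sums gives $M_\sigma = M_{\tilde{\sigma}}\, M_\tau$. As $M_\tau$ has only $|\tilde{\mathcal{A}}|$ rows, we get $\mathrm{rk}(M_\tau) \le |\tilde{\mathcal{A}}| < |\mathcal{A}|$, hence $\mathrm{rk}(M_\sigma) \le \mathrm{rk}(M_\tau) < |\mathcal{A}|$, contradicting $\mathrm{rk}(M_\sigma) = |\mathcal{A}|$. For the two-letter hypothesis, $|\tilde{\mathcal{A}}| < |\mathcal{A}| = 2$ together with $\tilde{\mathcal{A}} \neq \emptyset$ (forced since $\tau$ is non-erasing) gives $|\tilde{\mathcal{A}}| = 1$. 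Then $\tilde{\mathcal{A}}^\mathbb{Z}$ consists of a single constant point, so $\tilde{\sigma}(\tilde{\mathcal{A}}^\mathbb{Z})$ is a singleton; and since $\sigma = \tilde{\sigma}\tau$ extends to $\mathcal{A}^\mathbb{Z}$ with $\sigma(\mathcal{A}^\mathbb{Z}) = \tilde{\sigma}(\tau(\mathcal{A}^\mathbb{Z})) \subseteq \tilde{\sigma}(\tilde{\mathcal{A}}^\mathbb{Z})$, it follows that $|\sigma(\mathcal{A}^\mathbb{Z})| = 1$, contradicting the hypothesis $|\sigma(\mathcal{A}^\mathbb{Z})| \ge 2$.

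I do not expect a genuine obstacle here: the substantial work is already packaged in Lemma~\ref{l:reducealphabet}, and what remains is essentially bookkeeping — recording that $M_\sigma = M_{\tilde{\sigma}} M_\tau$, and checking that composing morphisms of free monoids and then extending to two-sided sequences is compatible with first extending and then composing, so that the inclusion $\sigma(\mathcal{A}^\mathbb{Z}) \subseteq \tilde{\sigma}(\tilde{\mathcal{A}}^\mathbb{Z})$ genuinely holds. The only mildly delicate point is the alignment convention — that position~$0$ of $\sigma(x)$ is the first letter of $\sigma(x_0)$ — which one must keep track of when writing $\sigma = \tilde{\sigma}\tau$ on $\mathcal{A}^\mathbb{Z}$; but this follows immediately from the definitions, so no real difficulty arises.
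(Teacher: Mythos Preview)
Your proof is correct and follows essentially the same route as the paper: assume non-injectivity, invoke Lemma~\ref{l:reducealphabet} to factor $\sigma = \tilde{\sigma}\tau$ through a strictly smaller alphabet, and read off the rank bound and the singleton image in the two respective cases. The only cosmetic difference is that the paper bounds $\mathrm{rk}(M_\sigma) \le \mathrm{rk}(M_{\tilde{\sigma}}) \le |\tilde{\mathcal{A}}|$ via the left factor rather than your $\mathrm{rk}(M_\sigma) \le \mathrm{rk}(M_\tau) \le |\tilde{\mathcal{A}}|$ via the right factor, but both are immediate from $M_\sigma = M_{\tilde{\sigma}} M_\tau$.
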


\begin{proof}
Suppose that $\sigma$ is not injective on $\mathcal{A}^\mathbb{Z}$, and write $\sigma = \tilde{\sigma} \tau$ as in Lemma~\ref{l:reducealphabet}.
Then we have $\mathrm{rk}(M_\sigma) \le \mathrm{rk}(M_{\tilde{\sigma}}) \le |\tilde{\mathcal{A}}| < |\mathcal{A}|$. 
If $|\mathcal{A}| = 2$, then we have $|\tilde{\mathcal{A}}| = 1$, thus $|\sigma(\mathcal{A}^\mathbb{Z})| = 1$.
\end{proof}

Note that if $\sigma$ is not fully recognizable for aperiodic points, then $\sigma(\mathcal{A}^\mathbb{Z})$ contains an aperiodic point and thus $|\sigma(\mathcal{A}^\mathbb{Z})| = \infty$.

\subsection{Proof of Theorem~\ref{t:recognizable}} \label{sec:proof-theor-reft:r}
If $\sigma$ is a left or right permutative morphism, then $\sigma$ is fully recognizable for aperiodic points by Lemma~\ref{l:permutative}. 
By Lemma~\ref{l:conjugaterecognizable}, this also holds when $\sigma$ is rotationally conjugate to such a morphism.

Suppose that there exists a morphism $\sigma:\, \mathcal{A} \to \mathcal{B}^+$ with $\mathrm{rk}(M_\sigma) = |\mathcal{A}|$ or $|\mathcal{A}| = 2$ that is not fully recognizable for aperiodic points.
From the set of morphisms with these properties, choose one with minimal total length.
Then, since $\sigma$ is injective on~$\mathcal{A}^\mathbb{Z}$ by Lemma~\ref{l:injectiveinvertible}, the lack of full recognizability implies that there exist $x, x' \in \mathcal{A}^\mathbb{N}$ and $a' \in \mathcal{A}$ such that $\sigma(x) = w\, \sigma(x')$ for some proper suffix~$w$ of $\sigma(a')$ with $0 < |w| < |\sigma(x_0)|$; let $\sigma(a') = vw$.
Then we have $\sigma = \sigma_1 \tau_1$ with $\tau_1:\, \mathcal{A} \to \mathcal{A}_1^+$, $\sigma_1:\, \mathcal{A}_1 \to \mathcal{B}^+$, $\mathcal{A}_1 = \mathcal{A} \cup \{a''\}$, where $a''$ is a letter that is not in~$\mathcal{A}$, $\tau_1(a') = a' a''$, $\tau_1(a) = a$ otherwise, $\sigma_1(a') = v$, $\sigma_1(a'') = w$, $\sigma_1(a) = \sigma(a)$ otherwise. 
Note that $\interleave\sigma_1\interleave = \interleave\sigma\interleave$, and each $\sigma_1(a)$, $a \in \mathcal{A}_1$, is a prefix of $\sigma(b)$ for some $b \in \mathcal{A}$. 
As $\sigma_1$ is not injective on $\mathcal{A}_1^\mathbb{N}$, we have a decomposition $\sigma_1 = \sigma_2 \tau_2$ by Lemma~\ref{l:reducealphabet}, with $\tau_2:\, \mathcal{A}_1 \to \mathcal{A}_2^+$, $\sigma_2:\, \mathcal{A}_2 \to \mathcal{B}^+$, $|\mathcal{A}_2| < |\mathcal{A}_1|$, $\interleave\sigma_2\interleave < \interleave\sigma_1\interleave$, and each $\sigma_2(a)$, $a \in \mathcal{A}_2$, is a prefix of some $\sigma_1(b)$, $b \in \mathcal{A}_1$. 
Therefore, we have 
\[
\sigma = \sigma_2 \tau_2 \tau_1, \quad \interleave\sigma_2\interleave < \interleave\sigma\interleave, \quad \mbox{and each $\sigma_2(a)$, $a \in \mathcal{A}_2$, is a prefix of some $\sigma(b)$, $b \in \mathcal{A}$}.
\]
Since $|\mathcal{A}_2| \le |\mathcal{A}|$, $\mathrm{rk}(M_\sigma) = |\mathcal{A}|$ implies that $\mathrm{rk}(M_{\tau_2\tau_1}) = \mathrm{rk}(M_{\sigma_2}) = |\mathcal{A}_2| = |\mathcal{A}|$, and $|\mathcal{A}| = 2$ implies that $|\mathcal{A}_2| = 2$. 
By the minimality of~$\interleave\sigma\interleave$, $\sigma_2$~is fully recognizable for aperiodic points.
If $|\sigma_2(a)| \ge 2$ for some $a \in \mathcal{A}_2$, then we also have $\interleave\tau_2\tau_1\interleave < \interleave\sigma\interleave$ and, hence, $\tau_2\tau_1$~is fully recognizable for aperiodic points.
By Lemma~\ref{lem:tele}, $\sigma$~is also fully recognizable for aperiodic points, contradicting our assumption. 
If $|\sigma_2(a)| = 1$ for all $a \in \mathcal{A}_2$, then $\mathrm{rk}(M_{\sigma_2}) = |\mathcal{A}_2| = |\mathcal{A}|$ and the fact that each $\sigma_2(a)$ is a prefix of some $\sigma(b)$, $b \in \mathcal{A}$, imply that $\sigma$ is left permutative, thus fully recognizable for aperiodic points, contradicting again our assumption. 
This concludes the proof of Theorem~\ref{t:recognizable}.

\section{$S$-adic shifts and recognizability} \label{main_S_adic}

Throughout this section, let $\boldsymbol{\sigma} = (\sigma_n)_{n\ge0}$ be a sequence of morphisms with $\sigma_n:\, \mathcal{A}_{n+1}\to \mathcal{A}_n^+$. Our main recognizability results for sequences of morphisms are Theorems~\ref{c:rec} and~\ref{t:evrec}.
We also prove an extended version of the stationary case, $\sigma_n=\sigma $ for each $n$,  in Theorem~\ref{t:substrec}.

We first recall basic definitions concerning $S$-adic shifts and define recognizability for sequences of morphisms.
$S$-adic shifts are obtained by replacing the iteration of a single substitution by the iteration of a sequence of morphisms.
There are two main ways to define a shift associated with such a sequence, as described in \cite{AubSab}. 
The first one, that we choose here, is by taking two-sided points whose subwords are all generated by iterating the morphisms.  Given a sequence of morphisms $\boldsymbol{\sigma} = (\sigma_n)_{n\ge0}$, for $0\leq n<N$, let 
\[
\sigma_{[n,N)} = \sigma_n \circ \sigma_{n+1} \circ \dots \circ \sigma_{N-1}.
\]
For $n\geq 0$, the \emph{languages $\mathcal{L}_{\boldsymbol{\sigma}}^{(n)}$ associated with~$\boldsymbol{\sigma}$} are defined by 
\[
\mathcal{L}_{\boldsymbol{\sigma}}^{(n)} = \big\{w \in \mathcal{A}_n^*:\, \mbox{$w$ is a subword of $\sigma_{[n,N)}(a)$ for some $a \in\mathcal{A}_N$, for some $N>n$}\big\}.
\]
For each $n\geq 0$ let $X_{\boldsymbol{\sigma}}^{(n)}$ be the set of points $x \in \mathcal{A}_n^\mathbb{Z}$ all of whose subwords belong to~$  \mathcal{L}_{\boldsymbol{\sigma}}^{(n)}$. We consider
$(X_{\boldsymbol{\sigma}}^{(n)},T)$, the {\em shift generated by~$\mathcal{L}_{\boldsymbol{\sigma}}^{(n)}$}. 
For ease of notation, we set $X_{\boldsymbol{\sigma}} = X_{\boldsymbol{\sigma}}^{(0)}$ and call $(X_{\boldsymbol{\sigma}},T)$ the \emph{$S$-adic shift} generated by the \emph{directive sequence}~$\boldsymbol{\sigma}$.

Note that the language of~$X_{\boldsymbol{\sigma}}^{(n)}$ might be strictly included in~$\mathcal{L}_{\boldsymbol{\sigma}}^{(n)}$: There might be words in the language~$\mathcal{L}_{\boldsymbol{\sigma}}^{(n)}$ that occur in no two-sided point in~$X_{\boldsymbol{\sigma}}^{(n)}$. 
This issue already arises in the non-primitive substitutive case. For example, take the case where $a$ only occurs as a prefix of  $\sigma(a)$; then there is no left-extension of~$a$. 

A~second definition of an $S$-adic shift is that where one considers the shift generated  by the set of {\em limit words} $\bigcap_{n\in\mathbb{N}} \sigma_{[0,n)}(\mathcal{A}_n ^\mathbb{Z})$. Note that these two definitions generally yield different shifts, already in  the non-minimal substitutive case. Consider for example the  substitution $\sigma:\, 0 \mapsto 00,\, 1 \mapsto 11$, on the alphabet $\{0,1\}$ as an illustration (see also \cite{AubSab}), and the constant sequence $\boldsymbol{\sigma}$ taking the constant value~$\sigma$.
The point $\cdots0011\cdots$ does not belong to~$X_{\boldsymbol{\sigma}}$, but does belong to  $\bigcap_{n\in\mathbb{N}} \sigma^n (\{0,1\}^\mathbb{Z})$. 
Lemma \ref{l:Xlimitword} below illustrates the fact that both shifts might differ.

We choose to adopt the first  definition, i.e., that of the shift generated by the languages~$\mathcal{L}_{\boldsymbol{\sigma}}^{(n)}$, since we consider this definition as being  closer to the usual definition of a substitutive shift, and also, we avoid situations where letters are artificially glued. 
However, we will  make use of limit words in Section~\ref{sec:minimal}.

\subsection{Recognizability for $S$-adic shifts}\label{definition-of-S-adic-shift}

\begin{definition}[Recognizable sequence of morphisms] \label{recognizable-morphism-sequences}
A~directive sequence $\boldsymbol{\sigma}$ is \emph{recognizable at level~$n$} if $\sigma_n$ is recognizable in~$X_{\boldsymbol{\sigma}}^{(n+1)}$. 
The sequence~$\boldsymbol{\sigma}$ is \emph{recognizable} if it is recognizable at level~$n$ for each $n \ge 0$; if there is an $n_0 \in \mathbb{N}$ such that $\boldsymbol{\sigma}$ is recognizable at level~$n$ for each $n \ge n_0$, then we say that  $\boldsymbol{\sigma}$ is \emph{eventually recognizable}. 
We will use all these notions \emph{for aperiodic points} as well. 
\end{definition}

The following lemma tells us that for each~$n$, every point~$x$ in~$X_{\boldsymbol{\sigma}}^{(n)}$ admits at least one desubstitution using~$\sigma_n$ and a point in~$X_{\boldsymbol{\sigma}}^{(n+1)}$. 
Therefore, recognizability at level~$n$ says that each element of~$X_{\boldsymbol{\sigma}}^{(n)}$ has \emph{exactly one} $\sigma_n$-representation in~$X_{\boldsymbol{\sigma}}^{(n+1)}$.

\begin{lemma} \label{at_least_one}
Each element of~$X_{\boldsymbol{\sigma}}^{(n)}$ has a (centered) $\sigma_{[n,N)}$-representation in~$X_{\boldsymbol{\sigma}}^{(N)}$ for all $N > n$. 
In particular, we have $X_{\boldsymbol{\sigma}}^{(n)} = \bigcup_{k\in\mathbb{Z}} T^k \sigma_n(X_{\boldsymbol{\sigma}}^{(n+1)})$ for all $n \ge 0$.  
\end{lemma}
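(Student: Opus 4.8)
The statement has two parts: first, that every $x \in X_{\boldsymbol{\sigma}}^{(n)}$ admits a centered $\sigma_{[n,N)}$-representation in $X_{\boldsymbol{\sigma}}^{(N)}$; second, the special case $N = n+1$ giving $X_{\boldsymbol{\sigma}}^{(n)} = \bigcup_{k\in\mathbb{Z}} T^k \sigma_n(X_{\boldsymbol{\sigma}}^{(n+1)})$. I would derive the second part from the first and from Remark~\ref{rem:sigma}(2): once each point of $X_{\boldsymbol{\sigma}}^{(n)}$ has a centered $\sigma_n$-representation in $X_{\boldsymbol{\sigma}}^{(n+1)}$, it lies in $\bigcup_k T^k\sigma_n(X_{\boldsymbol{\sigma}}^{(n+1)})$; conversely, since $\sigma_{[n,N)} = \sigma_n \circ \sigma_{[n+1,N)}$ maps $X_{\boldsymbol{\sigma}}^{(N)}$-generated words into $\mathcal{L}_{\boldsymbol{\sigma}}^{(n)}$, applying $\sigma_n$ to any point of $X_{\boldsymbol{\sigma}}^{(n+1)}$ (and shifting) produces a point all of whose subwords lie in $\mathcal{L}_{\boldsymbol{\sigma}}^{(n)}$, i.e.\ a point of $X_{\boldsymbol{\sigma}}^{(n)}$. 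So the crux is the first assertion.

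For the first assertion, fix $x \in X_{\boldsymbol{\sigma}}^{(n)}$ and $N > n$; write $\tau = \sigma_{[n,N)}$. The idea is a compactness/limit argument. Every subword $x_{[-j,j)}$ of $x$ lies in $\mathcal{L}_{\boldsymbol{\sigma}}^{(n)}$, hence is a subword of $\tau\bigl(\sigma_{[N,M)}(a)\bigr)$ for some letter $a \in \mathcal{A}_M$, $M > N$. In particular, for each $j$ there is a finite word $w^{(j)} \in \mathcal{L}_{\boldsymbol{\sigma}}^{(N)}$ and an offset $k_j$ with $0 \le k_j < |\tau(w^{(j)}_0)|$ such that $x_{[-j,j)}$ is a factor of $\tau(w^{(j)})$ positioned so that $x_0$ falls inside the block $\tau(w^{(j)}_{p_j})$ for the appropriate index $p_j$, and $x_{[-j,j)}$ is covered by $\tau(w^{(j)}_{[q_j, r_j)})$ for suitable $q_j \le p_j < r_j$. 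After re-indexing so that the distinguished letter sits at position $0$, i.e.\ replacing $w^{(j)}$ by a shift of it, we may regard $w^{(j)}$ as a longer and longer word in $\mathcal{L}_{\boldsymbol{\sigma}}^{(N)}$ with a marked origin, together with the cutting offset $k_j \in \{0, 1, \dots, \max_{a \in \mathcal{A}_N}|\tau(a)| - 1\}$, such that $\tau$ applied to this marked word, shifted by $k_j$, agrees with $x$ on $[-j,j)$. Since the alphabet $\mathcal{A}_N$ is finite and the offsets $k_j$ range over a finite set, a diagonal extraction gives a subsequence along which the marked words converge to a bi-infinite point $z \in \mathcal{A}_N^{\mathbb{Z}}$ (all of whose subwords lie in $\mathcal{L}_{\boldsymbol{\sigma}}^{(N)}$, so $z \in X_{\boldsymbol{\sigma}}^{(N)}$) and the offsets stabilize to some $k$ with $0 \le k < |\tau(z_0)|$. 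Passing to the limit in the equality "$\tau$ of the marked word, shifted by $k_j$, equals $x$ on $[-j,j)$" yields $x = T^k \tau(z)$, i.e.\ $(k,z)$ is a centered $\tau$-representation of $x$ in $X_{\boldsymbol{\sigma}}^{(N)}$.

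The main obstacle, and the step needing genuine care, is the bookkeeping of the marked word and the offset: one must be sure that when a factor $x_{[-j,j)}$ sits inside $\tau(\sigma_{[N,M)}(a))$, one can always locate a $\tau$-block boundary to the left of position $-j$ and to the right of position $j$ within that image word (so that $x_{[-j,j)}$ really is covered by $\tau$ of a \emph{sub}word of $\sigma_{[N,M)}(a)$, with room to spare on both sides), and that the marking is consistent so that the limit offset $k$ genuinely satisfies $0 \le k < |\tau(z_0)|$ rather than degenerating. Here non-erasingness of all morphisms is essential, since it guarantees $|\tau(b)| \ge 1$ for every letter and hence that block boundaries are not too sparse and that $z_0$ is well-defined in the limit; finiteness of $\mathcal{A}_N$ is what makes the diagonal extraction work. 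Everything else is a routine passage to the limit in the shift topology.
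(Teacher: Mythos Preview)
Your proposal is correct and follows essentially the same approach as the paper: find, for each window $x_{[-j,j)}$, a word $w^{(j)}\in\mathcal{L}_{\boldsymbol{\sigma}}^{(N)}$ and a bounded offset such that $\sigma_{[n,N)}(w^{(j)})$ covers the window, then pass to a limit by a Cantor diagonal argument using finiteness of~$\mathcal{A}_N$ and of the set of possible offsets. The ``obstacle'' you flag about degeneracy of the limit offset is handled exactly as you suggest, by extracting a subsequence along which both $k_j$ and the marked letter are constant; this is routine and is precisely what the paper's one-line invocation of ``a Cantor diagonal argument'' encodes.
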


\begin{proof}
Let $y \in X_{\boldsymbol{\sigma}}^{(n)}$.
By the definition of~$X_{\boldsymbol{\sigma}}^{(n)}$, each word $y_{[-\ell,\ell)}$ is a subword of~$\sigma_{[n,N')}(a)$ for some $a \in \mathcal{A}_{N'}$, $N'> n$, and we have $N' > N$ if $\ell$ is large.
Then $y_{[-\ell+k,\ell-k')} = \sigma_{[n,N)}(w)$ for some $w \in \mathcal{L}_{\boldsymbol{\sigma}}^{(N)}$, $0 \le k,k' < \max_{a\in\mathcal{A}_N} |\sigma_{[n,N)}(a)|$.
Since $|w| \to \infty$ as $\ell \to \infty$, a Cantor diagonal argument gives a word $x \in X_{\boldsymbol{\sigma}}^{(N)}$ and $0 \le k < |\sigma_{[n,N)}(x_0)|$ such that $(k,x)$ is a $\sigma_{[n,N)}$-representation of~$y$.
\end{proof}

\subsection{Examples of only eventual recognizability, and non-eventual recognizability}\label{examples}

 We say the directive sequence~$\boldsymbol{\sigma}$ is \emph{primitive} if for each $n\ge0$, the incidence matrix of $\sigma_{[n,N)}$ is a positive matrix for \emph{some} $N> n$.
If $\boldsymbol{\sigma}$ is primitive, then $(X_{\boldsymbol{\sigma}}^{(n)},T)$ is minimal for all~$n$, by \cite[Lemma~5.2]{Berthe-Delecroix}. In fact the following examples are all primitive, so lie squarely within the mainstream symbolic dynamics framework, where minimality is a common assumption.

First we give an example of a directive sequence that is eventually recognizable but not  recognizable. 

\begin{example} \label{ex:ws}
Let $\theta:\, \mathcal{A}_0 \to \mathcal{A}_0^+$ be an aperiodic primitive substitution of odd constant length such that, for some $a \in \mathcal{A}_0$, $\theta(a)$ starts with $a$ and $\theta^n(a)$ starts with $wa$ for some $n \ge 1$, $w \in \mathcal{A}_0^+$ of odd length; we illustrate this example by taking $\mathcal{A}_0=\{0,1\}$, $\theta(0) = 00100$, $\theta(1)= 00000$.  Note that 
 $ \mathcal{L}_\theta \cap \mathcal{A}_0^2 =\{ 00, 01, 10 \}$. Let  $\mathcal{A}_1 = \{A,B,C\}$ and let  
 $\sigma_0:\, \mathcal{A}_1 \to \mathcal{A}_0^+$ be the  morphism  $\sigma_0(A) = 00$, $\sigma_0(B) = 01$, $\sigma_0(C)= 10$. 
Then the substitution $\theta_1:\, \mathcal{A}_1 \to \mathcal{A}_1^+$ given by $\theta_1 = \sigma_0^{-1} \circ \theta \circ \sigma_0$ is well-defined and of the same constant length as~$\theta$; for our choice of~$\theta$, we have $\theta_1(A) = ACABA$, $\theta_1(B) = ACAAA$, $\theta_1(C) = AAABA$, thus our $\theta_1$ is {\em proper}:
Recall that a~morphism $\sigma:\, \mathcal{A} \to \mathcal{B}^+$ is proper if there exist two letters $b,e \in \mathcal{A}$ such that, for all $a \in \mathcal{A}$, $\sigma(a)$ begins with~$b$ and ends with~$e$. 
To see that $\theta_1$ is primitive in general, note that, by primitivity of~$\theta$, there exists $m \ge 1$ such that $\theta^m(b)$ contains $a$ for each $b \in \mathcal{A}_0$ and $\theta^m(a)$ contains all elements of $\mathcal{L}_\theta \cap \mathcal{A}_0^2$ as subword.
Moreover, $\theta^{m+n}(a)$ starts with $\theta^m(a)$ as well as with $\theta^m(w) \theta^m(a)$, where $\theta^m(w)$ is of odd length, thus each element of $\mathcal{L}_\theta \cap \mathcal{A}_0^2$ occurs at even and odd positions in~$\theta^{m+n}(a)$.
Hence $\theta_1^{2m+n}(b)$ contains all elements of $\mathcal{A}_1$ for all $b \in \mathcal{A}_1$. 

Let $\boldsymbol{\sigma} = (\sigma_n)_{n\ge0}$ with $\sigma_n = \theta_1$ for all $n\geq 1$. 
Then we have $X_{\boldsymbol{\sigma}} = X_\theta$ because $\sigma_0 \circ \theta_1^n = \theta^n \circ \sigma_0$ for all $n \ge 0$, and $X_{\boldsymbol{\sigma}}^{(n)} = X_{\theta_1}$ for all $n \ge 1$. 
In particular, the aperiodicity of $(X_\theta,T)$ implies that $(X_{\theta_1},T)$ is aperiodic.
As for $\mathcal{L}_\theta \cap \mathcal{A}_0^2$, we obtain that each word in~$\mathcal{L}_\theta$ occurs at even and odd positions in $\theta^m(a)$ for sufficiently large~$m$.
Therefore, each word in~$\mathcal{L}_\theta$ of even length is in $\sigma_0(\mathcal{L}_{\theta_1})$ and, hence, $X_\theta \subseteq \sigma_0(X_{\theta_1})$.
Since $\sigma_0(X_{\boldsymbol{\sigma}}^{(1)}) \subseteq X_{\boldsymbol{\sigma}}$, we get that $X_{\boldsymbol{\sigma}} = \sigma_0(X_{\boldsymbol{\sigma}}^{(1)})$. 
In particular, we have $T \sigma_0(X_{\boldsymbol{\sigma}}^{(1)}) = \sigma_0(X_{\boldsymbol{\sigma}}^{(1)})$, thus $\sigma_0$ is not recognizable in~$X_{\boldsymbol{\sigma}}^{(1)}$.
By Moss\'{e}'s theorem \cite{Mosse:96}, $\theta_1$ is recognizable in~$X_{\theta_1}$, thus $\sigma_n$ is recognizable in~$X_{\boldsymbol{\sigma}}^{(n+1)}$ for all $n \ge 1$. 
Hence, we have an eventually recognizable but not recognizable sequence~$\boldsymbol{\sigma}$, with $\boldsymbol{\sigma}$ being moreover primitive and such that all~$\sigma_n$, $n\ge 1$, are proper. The relevance of this is that the natural Bratteli-Vershik system that $\boldsymbol{\sigma}$ defines is a topological system which is not conjugate to $(X_{\boldsymbol{\sigma}}, \sigma)$; see Section~\ref{sec:bratt-versh-repr} for details.

\end{example} 

One can extend this example further to get (primitive) directive sequences $\boldsymbol{\sigma}$ that are not eventually recognizable. 
  
\begin{example} \label{ex:ws2}
We repeat the above procedure inductively. 
Let $\theta = \theta_0:\, \mathcal{A}_0 \to \mathcal{A}_0^+$ be an aperiodic primitive substitution of odd constant length such that, for some $a \in \mathcal{A}_0$, $\theta_0(a)$ starts with $a$ and $\theta_0^m(a)$ starts with $wa$ for some $m \ge 1$, $w \in \mathcal{A}_0^*$ of odd length.
Let $\sigma_0$, $\theta_1$ and~$\mathcal{A}_1$ be as in Example~\ref{ex:ws}. 
Then $\theta_1$ is an aperiodic primitive substitution of odd constant length.
For $b \in \mathcal{A}_1$ such that $\sigma_0(b)$ is a prefix of $\theta_0(a)$, $b$ is a prefix of $\theta_1(b)$ and $\sigma_0 \theta_1^{2m+2}(b)$ starts with $\theta_0^{2m+2}(a)$, thus with $\theta_0^{m+2}(w) \theta_0(wa)$ as well as $\theta_0^{m+1}(w) \theta_0(wa)$. 
For $|\theta_0(a)| = k$, we have $|\theta_0^{m+2}(w) \theta_0(w)| = (k^{m+2}+k) |w|$ and $|\theta_0^{m+2}(w) \theta_0(w)| = (k^{m+1}+k) |w|$.
As $k$ and $|w|$ are odd, at least one of these numbers is an odd multiple of~$2$. 
Therefore, $\theta_1^{2m+2}(b)$ starts with $vb$ for some $v \in \mathcal{A}_1^*$ of odd length. 
Therefore, we define recursively for $n \ge 0$, an alphabet~$\mathcal{A}_{n+1}$ that is in bijective correspondence, via $\sigma_n$, with the set of words of length two in~$\mathcal{L}_{\theta_n}$, and $\theta_{n+1} = \sigma_n^{-1} \circ \theta_n \circ \sigma_n$.
Let $\boldsymbol{\sigma} = (\sigma_n)_{n\geq 0}$.
Then $X_{\theta_n} = X_{\boldsymbol{\sigma}}^{(n)} = \sigma_n(X_{\boldsymbol{\sigma}}^{(n+1)})$ for all $n \ge 0$, thus $\boldsymbol{\sigma}$ is not recognizable at any level~$n$. 
Here, $\boldsymbol{\sigma}$~is primitive.
In this example, the size of the alphabets $\mathcal{A}_n$ grows (exponentially).
\end{example}

Summing up, we have proved  that Moss\'e's theorem cannot be  extended in a straightforward manner to the $S$-adic framework.  We stress the fact that the morphisms involved in the sequences $\boldsymbol{\sigma}$ above are defined on alphabets of various sizes. 
In Example~\ref{ex:ws}, we could replace $\sigma_0$ by a substitution on~$\mathcal{A}_1$ and have thus constant alphabets, e.g.\ take $\sigma_0(A) = AA$, $\sigma_0(B) = ABC$, $\sigma_0(C) = BCA$.
In Example~\ref{ex:ws2}, this is not possible. 
We state this in the following proposition.

\begin{proposition} \label{prop:prim}
There exist primitive sequences of substitutions that are eventually recognizable but not recognizable.
There exist primitive sequences of morphisms that are not eventually recognizable.
\end{proposition}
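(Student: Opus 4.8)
The plan is to read both assertions off the constructions already carried out in Examples~\ref{ex:ws} and~\ref{ex:ws2}, adding only the bookkeeping needed for the first one, where we additionally want all morphisms to be substitutions on a single alphabet.

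For the first assertion I would start from the directive sequence $\boldsymbol{\sigma}$ of Example~\ref{ex:ws}, for which it has already been checked that $\boldsymbol{\sigma}$ is primitive, that $X_{\boldsymbol{\sigma}} = \sigma_0(X_{\boldsymbol{\sigma}}^{(1)})$ is shift invariant, so that $\sigma_0$ is not recognizable in $X_{\boldsymbol{\sigma}}^{(1)}$ — and, $(X_{\boldsymbol{\sigma}},T)$ being aperiodic, not even for aperiodic points — while $\sigma_n = \theta_1$ is recognizable in $X_{\boldsymbol{\sigma}}^{(n+1)} = X_{\theta_1}$ for each $n \ge 1$ by Moss\'e's theorem~\cite{Mosse:96}. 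To turn this into a sequence of substitutions I would replace $\sigma_0$ by the substitution $\rho:\mathcal{A}_1 \to \mathcal{A}_1^+$ announced at the end of Section~\ref{examples}, $\rho(A)=AA$, $\rho(B)=ABC$, $\rho(C)=BCA$, and keep $\sigma_n = \theta_1$ for $n \ge 1$. Since only the level-$0$ morphism changes, the shifts $X_{\boldsymbol{\sigma}}^{(n)} = X_{\theta_1}$ and recognizability at every level $n \ge 1$ are untouched, and primitivity of $\boldsymbol{\sigma}$ still holds because $M_\rho$ has no zero row and no zero column, so $M_\rho M_{\theta_1}^N$ is positive for $N$ large. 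The remaining point to verify is that $\rho(X_{\theta_1})$ is shift invariant; granting this, the argument of Example~\ref{ex:ws} carries over verbatim (using that every $\rho(a)$ has length at least two) and produces two distinct centered $\rho$-representations in $X_{\boldsymbol{\sigma}}^{(1)}$ of any point of $\rho(X_{\theta_1})$, so $\boldsymbol{\sigma}$ is not recognizable at level~$0$, hence eventually recognizable but not recognizable.

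For the second assertion I would simply invoke the sequence $\boldsymbol{\sigma} = (\sigma_n)_{n\ge 0}$ of Example~\ref{ex:ws2}, where $\mathcal{A}_{n+1}$ codes, through $\sigma_n$, the length-two words of $\mathcal{L}_{\theta_n}$ and $\theta_{n+1} = \sigma_n^{-1}\theta_n\sigma_n$. What that example establishes inductively is that every pair $(\theta_n, a_n)$ satisfies the hypotheses of Example~\ref{ex:ws}: $\theta_n$ is aperiodic, primitive, of odd constant length, $\theta_n(a_n)$ begins with $a_n$, and some power $\theta_n^m(a_n)$ begins with $w a_n$ with $w$ of odd length. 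Hence the reasoning of Example~\ref{ex:ws} applies at \emph{every} level, giving $X_{\theta_n} = X_{\boldsymbol{\sigma}}^{(n)} = \sigma_n(X_{\boldsymbol{\sigma}}^{(n+1)})$, whence $T\sigma_n(X_{\boldsymbol{\sigma}}^{(n+1)}) = \sigma_n(X_{\boldsymbol{\sigma}}^{(n+1)})$ and $\sigma_n$ is not recognizable in $X_{\boldsymbol{\sigma}}^{(n+1)}$. As this fails at every level, $\boldsymbol{\sigma}$ is not eventually recognizable, and it is primitive since each $\theta_n$ is and $X_{\theta_n} = X_{\boldsymbol{\sigma}}^{(n)}$.

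I expect the only genuinely delicate ingredient to be the inductive step of the second construction — and, for the first assertion, the shift invariance of $\rho(X_{\theta_1})$, which is an instance of the same phenomenon: one has to propagate the ``odd-length prefix'' property through the recoding. As indicated in Example~\ref{ex:ws2}, this comes down to checking that, with $k = |\theta_n(a_n)|$, at least one of the two candidate prefix lengths $(k^{m+2}+k)|w|$ and $(k^{m+1}+k)|w|$ is twice an odd number, which follows from a short count of $2$-adic valuations using that $k$ and $|w|$ are odd. Everything else — primitivity, aperiodicity, and the manipulations of the form ``$X = \sigma(X)$'' — is a routine repetition of what has already been done in Examples~\ref{ex:ws} and~\ref{ex:ws2}.
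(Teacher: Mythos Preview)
Your overall strategy matches the paper's: both assertions are simply read off Examples~\ref{ex:ws} and~\ref{ex:ws2}, together with the paper's remark that $\sigma_0$ in Example~\ref{ex:ws} can be replaced by the substitution $\rho$ on~$\mathcal{A}_1$ so as to get a sequence of substitutions on a fixed alphabet.

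There is, however, a genuine gap in your plan for the first assertion. You propose to verify that $\rho(X_{\theta_1})$ is shift-invariant and then copy the argument of Example~\ref{ex:ws}. But $\rho(X_{\theta_1})$ is \emph{not} shift-invariant: if $x \in X_{\theta_1}$ has $x_0 = C$, then $(\rho(x))_{[0,3)} = BCA$, so $(T\rho(x))_0 = C$, whereas the zeroth coordinate of any point of $\rho(X_{\theta_1})$ lies in $\{A,B\}$ (the set of first letters of $\rho(A),\rho(B),\rho(C)$). So the verification you announce cannot go through, and it is in any case not an instance of the ``odd-length prefix'' computation from Example~\ref{ex:ws2}.

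Non-recognizability of $\rho$ in $X_{\theta_1}$ does hold, but the clean reason is that $\rho$ factors as $\rho = \iota\circ\sigma_0$, where $\iota:\{0,1\}\to\mathcal{A}_1^+$ is the left permutative morphism $0\mapsto A$, $1\mapsto BC$ (check: $\iota(00)=AA$, $\iota(01)=ABC$, $\iota(10)=BCA$). By Lemma~\ref{l:permutative}, $\iota$ is fully recognizable for aperiodic points, hence recognizable in the aperiodic shift $X_\theta = \bigcup_k T^k\sigma_0(X_{\theta_1})$. Lemma~\ref{lem:tele} then says that $\rho = \iota\sigma_0$ is recognizable in $X_{\theta_1}$ if and only if $\sigma_0$ is, and Example~\ref{ex:ws} shows the latter fails. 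This is the argument that actually justifies the paper's one-line remark.

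A smaller point on the second assertion: primitivity of $\boldsymbol{\sigma}$ is not an immediate consequence of ``each $\theta_n$ is primitive and $X_{\boldsymbol{\sigma}}^{(n)} = X_{\theta_n}$''. One should observe that $\sigma_{[n,N)}(a)$ is a word of length $2^{N-n}$ in the minimal language $\mathcal{L}_{\theta_n}$, hence contains every letter of $\mathcal{A}_n$ once $N-n$ is large enough.
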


\subsection{First recognizability results}
As an immediate consequence of Theorem \ref{t:recognizable}, we have the following.

\begin{theorem}\label{c:rec}
Let $\boldsymbol{\sigma} = (\sigma_n)_{n\ge0}$ be a sequence of morphisms with $\sigma_n:\, \mathcal{A}_{n+1}\to \mathcal{A}_n^+$.
If each morphism $\sigma_n$ satisfies one of 
\begin{itemize}
\item  $\mathrm{rk}(M_{\sigma_{n}}) = |\mathcal{A}_{n+1}|$, or
\item $|\mathcal{A}_{n+1}| = 2$, or
\item  $\sigma_n$ is (rotationally conjugate to) a left or right permutative morphism,
\end{itemize}
then $\boldsymbol{\sigma}$ is  recognizable for aperiodic points.
\end{theorem}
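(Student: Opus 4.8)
The plan is to derive Theorem~\ref{c:rec} directly from Theorem~\ref{t:recognizable} together with the observation that recognizability for aperiodic points \emph{in the full shift} is a fortiori recognizability for aperiodic points \emph{in any subshift}. First I would fix $n \ge 0$ and show that $\sigma_n$ is recognizable in $X_{\boldsymbol{\sigma}}^{(n+1)}$ for aperiodic points, which by Definition~\ref{recognizable-morphism-sequences} is exactly what ``recognizable at level~$n$ for aperiodic points'' means; since $n$ is arbitrary, this gives that $\boldsymbol{\sigma}$ is recognizable for aperiodic points.

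The key step is the following elementary monotonicity remark: if $\sigma:\,\mathcal{A}\to\mathcal{B}^+$ is recognizable in $\mathcal{A}^\mathbb{Z}$ for aperiodic points and $X \subseteq \mathcal{A}^\mathbb{Z}$, then $\sigma$ is recognizable in $X$ for aperiodic points. Indeed, by Definition~\ref{def:recog}, full recognizability for aperiodic points says every aperiodic $y \in \mathcal{B}^\mathbb{Z}$ has at most one centered $\sigma$-representation $(k,x)$ with $x \in \mathcal{A}^\mathbb{Z}$; restricting the admissible $x$ to lie in $X$ can only decrease the number of such representations, so every aperiodic $y$ still has at most one centered $\sigma$-representation in $X$. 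I would state this as a one-line observation (or fold it silently into the argument).

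Then the proof is assembled as follows. By hypothesis, each $\sigma_n:\,\mathcal{A}_{n+1}\to\mathcal{A}_n^+$ satisfies one of the three bulleted conditions, which are precisely the three hypotheses of Theorem~\ref{t:recognizable} (with $\mathcal{A} = \mathcal{A}_{n+1}$, $\mathcal{B} = \mathcal{A}_n$). Hence Theorem~\ref{t:recognizable} applies and $\sigma_n$ is fully recognizable for aperiodic points, i.e.\ recognizable in $\mathcal{A}_{n+1}^\mathbb{Z}$ for aperiodic points. Since $X_{\boldsymbol{\sigma}}^{(n+1)} \subseteq \mathcal{A}_{n+1}^\mathbb{Z}$, the monotonicity remark gives that $\sigma_n$ is recognizable in $X_{\boldsymbol{\sigma}}^{(n+1)}$ for aperiodic points. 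As this holds for every $n \ge 0$, the directive sequence $\boldsymbol{\sigma}$ is recognizable at every level for aperiodic points, hence recognizable for aperiodic points, completing the proof.

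There is essentially no obstacle here: the entire content of Theorem~\ref{c:rec} is packaged in Theorem~\ref{t:recognizable}, and the only thing to check is the trivial passage from the full shift to a subshift, which is why the text calls this ``an immediate consequence.'' The only point requiring a moment's care is matching the bullet hypotheses verbatim with those of Theorem~\ref{t:recognizable} (in particular that ``rotationally conjugate to a left or right permutative morphism'' in both statements refers to the same notion defined in Section~\ref{sec:recogn-with-resp}), and noting that the notion of recognizability used throughout is for \emph{centered} representations, so that the restriction-of-domain argument is literally the statement that fewer admissible preimages yield fewer representations.
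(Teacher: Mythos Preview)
Your proposal is correct and matches the paper's approach exactly: the paper simply states that Theorem~\ref{c:rec} is ``an immediate consequence of Theorem~\ref{t:recognizable}'' without further argument, and your write-up spells out precisely this immediate consequence (apply Theorem~\ref{t:recognizable} to each~$\sigma_n$, then pass from the full shift to the subshift $X_{\boldsymbol{\sigma}}^{(n+1)}$ by the trivial monotonicity of Definition~\ref{def:recog}).
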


Note that if $\sigma:\mathcal{A} \rightarrow \mathcal{B}$ is permutative or if $\mathrm{rk}(M_{\sigma}) = |\mathcal{A}|$, then  $|\mathcal{B}|\geq |\mathcal{A}|$. 
This implies that all considered directive sequences in Theorem~\ref{c:rec} are defined on bounded alphabets.

Next we give examples of classical $S$-adic shifts that are recognizable as direct consequence of Theorem~\ref{c:rec}. Indeed  these examples are associated with continued fractions and involve substitutions with unimodular incidence matrices (they are square matrices and have determinant~$\pm1$, thus they have full rank).

\begin{example}[Arnoux-Rauzy shifts]\label{exa:AR}
Let $\mathcal{A}=\{1,2,\ldots,d\}$. 
Arnoux-Rauzy substitutions are defined as substitutions of the form $\mu_i:\, i \mapsto i,\, j \mapsto ji\ \mbox{for}\ j \in \mathcal{A} \setminus \{i\}$.
An Arnoux-Rauzy word~\cite{Arnoux-Rauzy:91} is a two-sided sequence in the shift $(X_{\boldsymbol{\mu}},T)$ generated by any directive sequence of Arnoux-Rauzy  substitutions $\boldsymbol{\mu}=(\mu_{i_n})$, where the sequence $(i_n)_{n\geq 0} \in \mathcal{A}^\mathbb{N}$ is such that every letter in~$\mathcal{A}$ occurs infinitely often in~$(i_n)_{n\ge0}$. 
For more on Arnoux-Rauzy sequences see e.g.\ \cite{Cassaigne-Ferenczi-Zamboni:00,Cassaigne-Ferenczi-Messaoudi:08}. 
In the case $d=2$, one recovers Sturmian sequences. 
Recognizability of $\boldsymbol{\mu}$ can be proved in various ways: 
It is easy to see that each $\mu_i$ is fully recognizable. 
It is also a consequence of Theorem~\ref{c:rec} since each $\mu_i$ is left permutative (and also rotationally conjugate to a right permutative substitution, and also its incidence matrix has full rank), as $(X_{\boldsymbol{\mu}},T)$ is aperiodic by the assumptions on~$(i_n)$.
\end{example}

\begin{example}[Unimodular continued fractions]
As explained in \cite{Berthe:11,Berthe-Delecroix,Berthe-Ferenczi-Zamboni:05},   $S$-adic expansions  are closely  related to continued fraction expansions.  
Indeed,  usual 
  multidimensional continued  fraction algorithms  produce  matrices with nonnegative entries;  we  then consider these matrices  as
incidence matrices of accordingly defined substitutions. Note that the choice of substitution in not canonical.
In other words,  a continued fraction algorithm  produces  directive sequences, and thus  $S$-adic  shifts.
The connection between a continued fraction algorithm and the associated $S$-adic shift then runs via frequencies: an expansion of the continued fraction algorithm produces a sequence of matrices and, hence, a sequence of the associated substitutions. These define an $S$-adic shift which has the property that its letter frequency vector (under suitable assumptions that provide its existence) admits this particular continued fraction expansion. The continued fraction algorithm here acts  as a renormalization process.  A fundamental example of this relation is between Sturmian sequences and regular continued fractions; see for instance~\cite{Arnoux-Fisher:01}. An illustration of this relation in terms Brun substitutions is considered in \cite{Berthe-Steiner-Thuswaldner}. Recognizability of all these $S$-adic shifts follows from Theorem~\ref{c:rec} by  the unimodularity of the matrices.

Moreover, minimal  shifts  having a sublinear number of subwords of  a given length
 are known to be $S$-adic \cite{Ferenczi:96},  and the $S$-adic expansion can be seen as  renormalization process.
 Furthermore, several induction and renormalization  procedures also yield suitable  $S$-adic expansions involving  unimodular matrices. The emblematic case is provided by aperiodic shifts generated by natural codings of an interval exchange whose orbits of discontinuities are infinite and disjoint, that are known 
to be $S$-adically generated by  directive  sequences of substitutions obtained   by applying Rauzy induction  (or any of its accelerations such as Rauzy-Veech-Zorich induction \cite{Zorich}, or  Marmi-Moussa-Yoccoz induction \cite{MarMouYoc}). Here again recognizability applies by unimodularity. We summarize this application of Theorem \ref{c:rec}  as follows.
\end{example}

\begin{proposition}
Let $\boldsymbol{\sigma}$ be a directive sequence obtained from a unimodular continued fraction expansion algorithm. 
Then $\boldsymbol{\sigma}$ is recognizable.
\end{proposition}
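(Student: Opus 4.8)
The plan is to read this off from Theorem~\ref{c:rec}. First I would unwind the definitions: a unimodular continued fraction algorithm produces at each step a square integer matrix of determinant $\pm1$, say of a fixed size $d\times d$, and (as recalled in the references above) one realizes these matrices as the incidence matrices of substitutions $\sigma_n:\,\mathcal{A}_{n+1}\to\mathcal{A}_n^+$ with $|\mathcal{A}_n|=|\mathcal{A}_{n+1}|=d$. In particular $\mathrm{rk}(M_{\sigma_n})=d=|\mathcal{A}_{n+1}|$ for every $n$, so the first bullet of Theorem~\ref{c:rec} applies to each $\sigma_n$, and Theorem~\ref{c:rec} then gives that $\boldsymbol{\sigma}$ is recognizable for aperiodic points.

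Next I would upgrade this to recognizability without the ``aperiodic points'' qualifier, using that the $S$-adic shifts arising from these algorithms (Sturmian and Arnoux--Rauzy words, natural codings of aperiodic interval exchanges, and the other examples discussed above) are aperiodic. The key remark is that aperiodicity propagates to every level: if some $x\in X_{\boldsymbol{\sigma}}^{(n+1)}$ were periodic, then $\sigma_n(x)$ would be a periodic point of $X_{\boldsymbol{\sigma}}^{(n)}$, so by induction aperiodicity of $X_{\boldsymbol{\sigma}}^{(0)}$ forces $X_{\boldsymbol{\sigma}}^{(n)}$ to be aperiodic for all $n$. Now fix $n$; if $y\in\mathcal{A}_n^\mathbb{Z}$ has a centered $\sigma_n$-representation $(k,x)$ with $x\in X_{\boldsymbol{\sigma}}^{(n+1)}$, then $y=T^k\sigma_n(x)\in X_{\boldsymbol{\sigma}}^{(n)}$ is aperiodic, hence by recognizability for aperiodic points it has exactly this one centered $\sigma_n$-representation in $X_{\boldsymbol{\sigma}}^{(n+1)}$ (its existence being Lemma~\ref{at_least_one}). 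Thus $\sigma_n$ is recognizable in $X_{\boldsymbol{\sigma}}^{(n+1)}$ for every $n$, which by Remark~\ref{rem:sigma} is exactly recognizability of $\boldsymbol{\sigma}$.

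I do not expect a genuine obstacle here: the rank computation is immediate from unimodularity, and the remainder is bookkeeping with Theorem~\ref{c:rec}, Lemma~\ref{at_least_one} and Remark~\ref{rem:sigma}. The only point needing a word of justification is the passage from ``for aperiodic points'' --- which is all Theorem~\ref{c:rec} provides --- to the unqualified statement, and this is precisely where one invokes that $(X_{\boldsymbol{\sigma}},T)$ is aperiodic, a standing assumption in all the continued-fraction settings under consideration (for an interval exchange, for instance, it amounts to the orbits of the discontinuities being infinite and disjoint). Absent such an assumption, the proposition should be understood as recognizability for aperiodic points.
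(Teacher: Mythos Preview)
Your proposal is correct and follows the paper's approach: the proposition is stated there as a direct summary of Theorem~\ref{c:rec}, the key observation being that unimodularity gives $\mathrm{rk}(M_{\sigma_n})=|\mathcal{A}_{n+1}|$. You are in fact more careful than the paper in spelling out the passage from ``recognizable for aperiodic points'' (which is all Theorem~\ref{c:rec} provides) to ``recognizable'', a step the paper leaves implicit via the standing aperiodicity hypotheses on the continued-fraction examples under discussion.
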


\section{Eventual recognizability of $S$-adic shifts on bounded alphabets} \label{main_S_adic-2}

\subsection{Statement of the result} 
In this section, we prove eventual recognizability under mild conditions. 

\begin{theorem} \label{t:evrec}
Let $\boldsymbol{\sigma} = (\sigma_n)_{n\ge0}$ be a sequence of morphisms with $\sigma_n:\, \mathcal{A}_{n+1}\to \mathcal{A}_n^+$ such that $\liminf_{n\to\infty} |\mathcal{A}_n| < \infty$ and $\sup_{n\ge0} |\{\mathcal{L}_x:\, x \in X_{\boldsymbol{\sigma}}^{(n)}\}| < \infty$. 
Then $\boldsymbol{\sigma}$ is eventually recognizable for aperiodic points. 
\end{theorem}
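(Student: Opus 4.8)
I would argue by contradiction, imitating level by level the dichotomy between Case~(1) and Case~(2) in the proof of Theorem~1 of \cite{Down:2008}. Assume $\boldsymbol{\sigma}$ is not eventually recognizable for aperiodic points; by Definition~\ref{recognizable-morphism-sequences} this means there are infinitely many $j$ for which $\sigma_j$ is not recognizable in~$X_{\boldsymbol{\sigma}}^{(j+1)}$ for aperiodic points. The first step is to move each such ambiguity to a level with a small alphabet: using $\liminf_{n\to\infty}|\mathcal{A}_n|<\infty$, fix a bound~$d$ and, for each bad~$j$, pick $m>j$ with $|\mathcal{A}_m|\le d$. Starting from an aperiodic witness $y\in X_{\boldsymbol{\sigma}}^{(j)}$ with two distinct centered $\sigma_j$-representations in~$X_{\boldsymbol{\sigma}}^{(j+1)}$, Lemma~\ref{at_least_one} and the bijection established in the proof of Lemma~\ref{lem:tele} produce two distinct centered $\sigma_{[j,m)}$-representations of~$y$ in~$X_{\boldsymbol{\sigma}}^{(m)}$; thus $\rho:=\sigma_{[j,m)}\colon \mathcal{A}_m\to\mathcal{A}_j^+$ is not recognizable in~$X_{\boldsymbol{\sigma}}^{(m)}$ for aperiodic points, with $|\mathcal{A}_m|\le d$. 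Choosing the ranges $[j,m)$ pairwise disjoint and invoking $\sup_n|\{\mathcal{L}_x\colon x\in X_{\boldsymbol{\sigma}}^{(n)}\}|<\infty$, I pass to a subsequence $(\rho_i)$ of such morphisms along which $|\mathcal{A}_{m_i}|$ is constant and the languages of the relevant witnesses, and of the points they desubstitute to, stabilise.

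I would then run the dichotomy on each $\rho=\rho_i$, along the lines of the proof of Theorem~\ref{t:recognizable}. If $\rho$ is not injective on $\mathcal{A}_{m_i}^{\mathbb{N}}$ or on $\mathcal{A}_{m_i}^{-\mathbb{N}}$, Lemma~\ref{l:reducealphabet} writes $\rho=\tilde\rho\,\tau$ with $|\tilde{\mathcal{A}}|<|\mathcal{A}_{m_i}|$ and $\interleave\tilde\rho\interleave<\interleave\rho\interleave$, and the contrapositive of the aperiodic part of Lemma~\ref{lem:tele} transfers the non-recognizability to~$\tilde\rho$ or to~$\tau$; iterating the reductions of Lemmas~\ref{l:reducetotallength}--\ref{l:reducealphabet}, with termination guaranteed by an induction on the total length as in the proof of Theorem~\ref{t:recognizable} (here the bounded alphabet size keeps the recursion under control), one is left with a morphism that is injective on both one-sided sequence spaces but still not recognizable for aperiodic points in the relevant shift. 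For such a morphism, the argument of Lemma~\ref{l:permutative} shows that any two distinct centered representations of the aperiodic witness have \emph{no common cut}: a common cut, combined with one-sided injectivity on both sides, would identify the two representations up to a shift, contradicting Lemma~\ref{lem:uniqueorbit}.

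It then remains to treat the ``no common cut'' situation occurring at infinitely many levels on a bounded alphabet, which is the combinatorial form of the ``odometer'' alternative of \cite{Down:2008}. Here the two tilings of the witness interlace with no boundary in common, so, exactly as in the proof of Lemma~\ref{l:permutative}, inside a tile of one tiling each letter of the other tiling is determined by the straddling tile together with the relative offset. Using that only finitely many letters occur and only finitely many languages arise among the points of the $X_{\boldsymbol{\sigma}}^{(m_i)}$, I would show that along these levels the normalised pair formed by the straddling tile and the offset can take only boundedly many values, so it must repeat, and hence some point of $X_{\boldsymbol{\sigma}}^{(m)}$ in the orbit closure of a witness contains arbitrarily long subwords with a fixed period; a compactness argument then yields a genuinely periodic point in that orbit closure, contradicting aperiodicity. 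Tracking how large~$m$ must be in terms of~$d$, the number of languages, and the sizes of the finitely many morphisms involved, gives the announced explicit lower bound on the level $n_0$ from which $\boldsymbol{\sigma}$ is recognizable.

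The step I expect to be the main obstacle is this last one. In the stationary primitive setting of \cite{Mosse:96} the substitution language is power free, which rules out long repetitions outright; in our non-stationary setting it need not be, so the periodicity must be extracted purely from the interlacing of the two tilings, using only the boundedness of the alphabet sizes and of the number of languages, and---crucially, to obtain a single threshold $n_0$ rather than infinitely many isolated failures---this extraction has to be carried out uniformly across all the levels at which recognizability is violated.
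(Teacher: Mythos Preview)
Your reduction to morphisms that are injective on bi-infinite sequences is fine, and you correctly observe (this is the paper's Lemma~\ref{l:injectivecut}) that for such morphisms two distinct centered representations of an aperiodic point have no common cut. The genuine gap is in your final step.

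You propose to conclude periodicity from a \emph{single pair} of interlacing tilings with no common cut, by the pigeonhole on (straddling tile, offset) as in Lemma~\ref{l:permutative}. But that lemma works only because permutativity makes the first letter of $\sigma(x_\ell)$ determine~$x_\ell$; without permutativity the pair $(x'_{\ell'},h-h')$ determines only one letter of~$\sigma(x_\ell)$, not~$x_\ell$ itself, so a repetition of the pair does \emph{not} force $x_{[\ell,\infty)}$ to repeat. In fact, on alphabets of size~$\ge 3$ an aperiodic point can perfectly well admit two representations with no common cut; two is simply not enough. The correct combinatorial input is the Infection Lemma (Lemma~\ref{l:infection}): one needs roughly $2^{K-1}(K-1)+2$ representations with \emph{pairwise} no common cut before periodicity is forced. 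What you are missing is the mechanism for \emph{accumulating} that many: the paper shows (Lemma~\ref{l:cutpropagation}) that ``no common cut'' is preserved under composition, and (Lemma~\ref{l:oneforall}) that it passes from one point to every point whose language is contained in its language. Together with the bound on the number of languages, this doubles the number of cut-disjoint representations at each non-recognizable level along a suitable branch of a finite forest of languages, until the Infection Lemma applies. Your sketch never leaves the regime of two representations, so it cannot reach a contradiction.

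A second, smaller gap: even if your argument produced a periodic point in the orbit closure of the aperiodic witness~$y$, that would not contradict anything---we only assume $y$ itself is aperiodic, not the whole shift. The paper's argument avoids this by showing that the witness~$y$ \emph{itself} has many cut-disjoint representations and is therefore periodic by Lemma~\ref{l:infection}, which is a direct contradiction.
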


For a minimal shift $(X,T)$, each $x \in X$ has the language $\mathcal{L}_x = \mathcal{L}_X$, i.e., $|\{\mathcal{L}_x:\, x \in X\}| = 1$. 
By Proposition~\ref{p:minimalcomponents} below, the number of different languages in~$X_{\boldsymbol{\sigma}}^{(n)}$ is also bounded for \emph{everywhere growing} directive sequences~$\boldsymbol{\sigma}$ with $\liminf_{n\to\infty} |\mathcal{A}_n| < \infty$, where everywhere growing means that $\lim_{n\to \infty}\min_{a\in \mathcal{A}_n} |\sigma_{[0,n)}(a)| = \infty$.
Note that  $(X_{\boldsymbol{\sigma}},T)$ has zero entropy in this case by \cite[Theorem~4.3]{Berthe-Delecroix}. We  formulate this special instance of Theorem \ref{t:evrec}.

\begin{theorem} \label{t:evrec-1}
Let $\boldsymbol{\sigma} = (\sigma_n)_{n\ge0}$ be an everywhere growing  sequence of morphisms $\sigma_n:\, \mathcal{A}_{n+1}\to \mathcal{A}_n^+$ such that $\liminf_{n\to\infty} |\mathcal{A}_n| < \infty$.
Then $\boldsymbol{\sigma}$ is eventually recognizable for aperiodic points. 
\end{theorem}

Moreover, we give bounds for the number of levels where $\boldsymbol{\sigma}$ is not recognizable, depending only on the size of the alphabets and the number of different languages in~$X_{\boldsymbol{\sigma}}^{(n)}$.

For a substitution~$\sigma$, we show in Proposition~\ref{p:sigmacomp} below that $\{\mathcal{L}_x:\, x \in X_\sigma\}$ is always a finite set.
As eventual recognizability of a stationary sequence~$\boldsymbol{\sigma}$ is equivalent to recognizability, we obtain the following theorem, which improves a result of Bezuglyi, Kwiatkowski and Medynets \cite[Theorem~5.17]{Bezugly:2009} stating that each aperiodic substitution~$\sigma$ is recognizable in~$X_\sigma$.

\begin{theorem} \label{t:substrec}
Let $\sigma$ be a substitution. 
Then $\sigma$ is recognizable in~$X_\sigma$ for aperiodic points. 
\end{theorem}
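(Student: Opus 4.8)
The plan is to deduce Theorem~\ref{t:substrec} from Theorem~\ref{t:evrec} by viewing a substitution $\sigma:\,\mathcal{A}\to\mathcal{A}^+$ as a stationary directive sequence $\boldsymbol{\sigma}=(\sigma_n)_{n\ge0}$ with $\sigma_n=\sigma$ and $\mathcal{A}_n=\mathcal{A}$ for all $n$. The key observations are that, in the stationary case, $X_{\boldsymbol{\sigma}}^{(n)}=X_\sigma$ for every $n$, so recognizability at every level is literally the single statement ``$\sigma$ is recognizable in~$X_\sigma$,'' and \emph{eventual} recognizability of a stationary sequence is therefore equivalent to recognizability at \emph{some} level, hence at \emph{every} level. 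So it suffices to check that the hypotheses of Theorem~\ref{t:evrec} are met: $\liminf_{n\to\infty}|\mathcal{A}_n|=|\mathcal{A}|<\infty$ is immediate, and the remaining hypothesis $\sup_{n\ge0}|\{\mathcal{L}_x:\,x\in X_{\boldsymbol{\sigma}}^{(n)}\}|<\infty$ reduces to showing that $\{\mathcal{L}_x:\,x\in X_\sigma\}$ is a \emph{finite} set.

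First I would record the elementary stationary identities: from Lemma~\ref{at_least_one} (or directly from the definition $\mathcal{L}_{\boldsymbol{\sigma}}^{(n)}=\mathcal{L}_\sigma$ for all~$n$), one gets $X_{\boldsymbol{\sigma}}^{(n)}=X_\sigma$ for all~$n$, and hence that $\boldsymbol{\sigma}$ is recognizable at level~$n$ if and only if $\sigma$ is recognizable in~$X_\sigma$, with the analogous equivalence for aperiodic points. Consequently, if $\boldsymbol{\sigma}$ is eventually recognizable for aperiodic points, then it is recognizable at level~$n$ for some $n$, which by stationarity means $\sigma$ is recognizable in~$X_\sigma$ for aperiodic points; this is exactly the conclusion we want. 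So the whole theorem follows once the two hypotheses of Theorem~\ref{t:evrec} are verified.

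The substantive step—the one I expect to be the main obstacle—is Proposition~\ref{p:sigmacomp}, i.e.\ proving that $\{\mathcal{L}_x:\,x\in X_\sigma\}$ is finite. The point is that, even though $(X_\sigma,T)$ need not be minimal, the language $\mathcal{L}_x$ of a point is governed by which letters of~$\mathcal{A}$ can occur ``cofinally'' to the left and to the right of~$x$, and by the combinatorics of which letters appear in images $\sigma^k(a)$; since $\mathcal{A}$ is finite, there are only finitely many such ``types.'' Concretely, one would analyze the orbit-closure structure: a point $x\in X_\sigma$ determines a pair consisting of the set of letters appearing infinitely often to its right and the set appearing infinitely often to its left, and one argues that $\mathcal{L}_x$ is determined (among the finitely many possibilities) by such finite data, perhaps after desubstituting finitely many times and using that $\sigma$ fixes $\mathcal{L}_\sigma$. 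Since there are only finitely many subsets of~$\mathcal{A}$, this bounds $|\{\mathcal{L}_x:\,x\in X_\sigma\}|$. This finiteness argument is the real content; once it is in place, Theorem~\ref{t:substrec} is an immediate corollary of Theorem~\ref{t:evrec} together with the stationary reductions above, and it improves \cite[Theorem~5.17]{Bezugly:2009} precisely because no aperiodicity of the whole shift $(X_\sigma,T)$ is assumed—only the point~$y$ whose $\sigma$-representation we are controlling is required to be aperiodic.
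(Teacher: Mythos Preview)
Your overall strategy is exactly the paper's: view $\sigma$ as the stationary directive sequence, note $X_{\boldsymbol{\sigma}}^{(n)}=X_\sigma$ for all~$n$, observe that for a stationary sequence eventual recognizability (for aperiodic points) is the same as recognizability (for aperiodic points), and then apply Theorem~\ref{t:evrec}; the only nontrivial hypothesis to check is the finiteness of $\{\mathcal{L}_x:\,x\in X_\sigma\}$, i.e.\ Proposition~\ref{p:sigmacomp}. You have correctly located where the work lies.

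The gap is in your sketch of Proposition~\ref{p:sigmacomp}. The heuristic that $\mathcal{L}_x$ is determined by ``the set of letters appearing infinitely often to the right and to the left of~$x$'' does not hold in general for non-primitive substitutions, and the paper does not argue this way. The actual proof first replaces $\sigma$ by a power so that $\sigma^2(a)$ and $\sigma(a)$ share first and last letters, splits $\mathcal{A}=\mathcal{A}_\mathrm{u}\cup\mathcal{A}_\mathrm{b}$ into growing and bounded letters, and shows that the limit words of the stationary sequence are exactly the fixed points of~$\sigma$. It then runs an inductive reduction in the spirit of Proposition~\ref{p:minimalcomponents}: one builds an increasing chain of subalphabets $\mathcal{A}^{(k)}$ by choosing words $w^{(k)}\in\mathcal{L}_\sigma$ that separate languages, so that after at most $|\mathcal{A}|-1$ steps one is reduced to a substitution on a strictly smaller alphabet plus the set of languages of fixed points. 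The remaining (and delicate) part is a case analysis showing that, up to shift, there are only finitely many fixed points of~$\sigma$ in~$X_\sigma$: one treats separately the cases where a growing letter occurs on both sides of the origin, on only one side, or on neither side, bounding in each case the length of the ``bounded-letter'' block around the origin. Your proposal does not supply any of this, and the ``cofinal letter set'' idea would need substantial work (or replacement) to become a proof.
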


Our proof is based on the work of Downarowicz and Maass in \cite{Down:2008}. 
This work has already been modified in \cite{Bezugly:2009}; our work can thus be seen as an extension of these last two articles. 
We first reduce Theorem~\ref{t:evrec} to the slightly more special Proposition~\ref{p:evrec2}, which will be proved in Sections~\ref{sec:nocu} and~\ref{sec:proof2}, first in the special case where each shift $(X_{\boldsymbol{\sigma}}^{(n)},T)$ is minimal and each $\sigma_n$ is injective on~$X_{\boldsymbol{\sigma}}^{(n+1)},$ before considering the general case. 

We point out some (necessary) differences in our proof strategy. 
In \cite{Down:2008} the authors work with one topological dynamical system $(X,F)$: a topological Bratteli-Vershik system, taking a sequence of shift factors $(X_i,T)$ of the given system which (collectively) separate points. 
Bezuglyi, Kwiatkowski and Medynets work with a stationary aperiodic $S$-adic shift, i.e., one where $\sigma_n= \sigma_0$ for each $n\geq 0$. 
This entails that their spaces are stationary, i.e., $X_{\boldsymbol{\sigma}}^{(n)} = X_{\boldsymbol{\sigma}}$ for each $n\geq 0$. 
In \cite{Down:2008}, the strategy was to obtain a sufficiently large collection of points in~$X$ that project to the same point in a fixed factor $(X_i,T)$, and whose projections do not synchronize, i.e., have no common cuts. 
This lack of synchronicity implies that $X_i$ is finite, using the Infection Lemma (Lemma~\ref{l:infection}).
In \cite{Bezugly:2009}, a similar strategy is followed to create points with arbitrarily many $\sigma$-representations.
To obtain this collection of points, both sets of authors use limiting arguments in their (fixed) space.
Here our spaces change, and so we need another mechanism to ensure that we have sufficiently many points in some~$X_{\boldsymbol{\sigma}}^{(n)}$ in order to invoke the Infection Lemma. 
We overcome this issue by showing that if one point~$x$ has at least two (special kinds of) centered representations, then so does every point whose language is contained in that of~$x$ (Lemma~\ref{l:oneforall}). 
Using this fact, we are able to construct points in~$X_{\boldsymbol{\sigma}}$ which have as many  (non-synchronized) $\sigma_{[0,n)}$-representations as we need, provided that $n$ is large. 

\subsection{Telescoping an $S$-adic sequence}\label{sec:telesc}
We can consider a \emph{telescoping} of a directive sequence of morphisms. 
Namely, given a directive sequence $ {\boldsymbol{\sigma}}= (\sigma_n)_{n\geq 0}$ and an increasing sequence of integers $(n_k)_{k \geq 0}$ with $n_0=0$, we define the telescoping of $\boldsymbol{\sigma}$ along  $(n_k)_{k \geq 0}$ to be the directive sequence  $\boldsymbol{\sigma}'= (\sigma'_k)_{k\geq 0}$ where $\sigma'_k = \sigma_{[n_k,n_{k+1})}$. 
Note that $X_{\boldsymbol{\sigma}'} = X_{\boldsymbol{\sigma}}$ and $X_{\boldsymbol{\sigma}'}^{(k)} = X_{\boldsymbol{\sigma}}^{(n_k)}$ for each~$k$.  

By Lemma~\ref{lem:tele}, $\boldsymbol{\sigma}$ is (eventually) recognizable if and only if $\boldsymbol{\sigma}'$ is (eventually) recognizable.  The situation is more complicated when we consider recognizability for aperiodic points. For, let $\sigma:X\rightarrow Y$ and $\tau:Y\rightarrow Z$. If $Z$ contains periodic points which have aperiodic $\tau$-representations in~$Y$, then it is possible that $\sigma$ is not recognizable in $X$ for aperiodic points, even if $\tau \sigma$ is recognziable in~$X$ for aperiodic points. However,
when $|\{\mathcal{L}_x:\, x \in X_{\boldsymbol{\sigma}}^{(n)}\}|$ is bounded, there are only finitely many $n$  where $\sigma_n(x)$ is periodic for some aperiodic $x \in X_{\boldsymbol{\sigma}}^{(n+1)}$, and 
we have that $\boldsymbol{\sigma}$ is eventually recognizable for aperiodic points if and only if $\boldsymbol{\sigma}'$ is eventually recognizable for aperiodic points.
Therefore, telescoping enables us to replace the assumption $\liminf_{n\to\infty} |\mathcal{A}_n| < \infty$ in Theorem~\ref{t:evrec} by the assumption that $|\mathcal{A}_n|$ is bounded. 
Thus Theorem~\ref{t:evrec} is proved if we establish the following proposition.

\begin{proposition} \label{p:evrec2}
Let $\boldsymbol{\sigma} = (\sigma_n)_{n\ge0}$ be a sequence of morphisms with $\sigma_n:\, \mathcal{A}_{n+1}\to \mathcal{A}_n^+$ such that $|\mathcal{A}_n|$ and $|\{\mathcal{L}_x:\, x \in X_{\boldsymbol{\sigma}}^{(n)}\}|$ are bounded. 
Then $\boldsymbol{\sigma}$ is eventually recognizable for aperiodic points. 
\end{proposition}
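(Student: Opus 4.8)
The plan is to follow the strategy outlined just before the proposition: mimic the Downarowicz--Maass dichotomy (expansive vs.\ odometer) from \cite{Down:2008}, as adapted in \cite{Bezugly:2009}, but compensate for the fact that the spaces $X_{\boldsymbol{\sigma}}^{(n)}$ now vary with~$n$. Assume, for contradiction, that $\boldsymbol{\sigma}$ is not eventually recognizable for aperiodic points; that is, there are infinitely many levels~$n$ at which $\sigma_n$ fails to be recognizable in~$X_{\boldsymbol{\sigma}}^{(n+1)}$ for some aperiodic point. By telescoping (Section~\ref{sec:telesc} and Lemma~\ref{lem:tele}) we may assume that $\sigma_n$ is non-recognizable for aperiodic points \emph{at every level}~$n$, and that the alphabets have a fixed size~$d$ and the number of languages $|\{\mathcal{L}_x:\,x\in X_{\boldsymbol{\sigma}}^{(n)}\}|$ is bounded by some fixed~$r$. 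First I would extract, for each~$n$, a concrete witness of non-recognizability: an aperiodic point of $X_{\boldsymbol{\sigma}}^{(n)}$ together with two distinct centered $\sigma_n$-representations in $X_{\boldsymbol{\sigma}}^{(n+1)}$, and, via Lemma~\ref{lem:uniqueorbit}, these two representations lie on different shift orbits, hence (after passing to a suitable finite-type ``mismatch configuration'') their $\sigma_n$-cutting-point sets are genuinely different near the origin. I would isolate a bounded amount of combinatorial data recording this mismatch: the pair of letters being read, the offset, and the discrepancy between the two tilings in a bounded window; boundedness of~$d$ makes this data range over a finite set independent of~$n$.

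Next, the crucial amplification step. The point is to pass from ``one bad point at level~$n$'' to ``many simultaneously bad representations of a single point lower down''. Here I would use Lemma~\ref{l:oneforall} (the ``one-for-all'' lemma quoted in Section~\ref{main_S_adic-2}): if one point $x\in X_{\boldsymbol{\sigma}}^{(n+1)}$ has two centered representations of the relevant special type, then \emph{every} point whose language is contained in $\mathcal{L}_x$ inherits such representations. Since only~$r$ languages occur at each level, a pigeonhole over the infinitely many bad levels lets me select an infinite subsequence of levels $n_1<n_2<\cdots$ on which the relevant languages are ``compatible'' in the containment sense needed to iterate. Composing the desubstitutions from level $n_k$ down to level~$0$ with $\sigma_{[0,n_k)}$, and using Lemma~\ref{lem:tele} to combine representations at successive levels, I would manufacture, for each~$k$, a point $x^{(k)}\in X_{\boldsymbol{\sigma}}$ (or in a suitable $X_{\boldsymbol{\sigma}}^{(m)}$) possessing at least~$k$ distinct centered $\sigma_{[0,n_k)}$-representations whose cutting-point sets are pairwise non-synchronized near the origin --- the mismatch at each intermediate level guarantees a fresh incompatibility that survives composition. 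Passing to a subsequential limit of the $x^{(k)}$, and of the accompanying tilings, yields a single point of $X_{\boldsymbol{\sigma}}$ admitting arbitrarily many pairwise non-synchronized tilings by blocks $\sigma_{[0,n)}(a)$ for arbitrarily large~$n$.

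Finally I would invoke the Infection Lemma (Lemma~\ref{l:infection}): a point that carries many tilings with long blocks and no common cuts forces the block-alphabet, equivalently one of the $\mathcal{A}_n$, to be large --- quantitatively, the number of non-synchronized representations is bounded in terms of~$d$ (this is precisely the mechanism by which \cite{Down:2008} concludes finiteness of a factor). Since we arranged $|\mathcal{A}_n|=d$ for all~$n$ while producing unboundedly many non-synchronized representations, we reach a contradiction, and the bound on the number of non-recognizable levels that drops out of this counting is exactly $f(d,r)$ for an explicit function~$f$, giving the promised quantitative statement. I expect the \textbf{main obstacle} to be the amplification step: carefully choosing the subsequence of levels and the containment relations among the (finitely many) languages so that the local mismatches at consecutive levels can be \emph{simultaneously realized} in one limit point without interfering with one another, i.e., controlling how a mismatch at level~$n_k$ is transported down through $\sigma_{[0,n_k)}$ and remains non-synchronized with the mismatches coming from the other levels. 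Getting the bookkeeping of cutting points right under composition --- and ensuring aperiodicity is preserved throughout, so that Lemma~\ref{lem:uniqueorbit} keeps applying --- is where the real work lies; the minimal, injective case should be handled first (as indicated in the text), with the general case obtained by splitting $X_{\boldsymbol{\sigma}}^{(n)}$ into finitely many minimal-like pieces according to the finitely many languages and applying Lemma~\ref{l:reducealphabet} to remove non-injectivity.
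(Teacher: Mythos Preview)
Your overall architecture is right --- non-recognizability at many levels, amplification via Lemma~\ref{l:oneforall}, doubling via Lemma~\ref{l:cutpropagation}, and contradiction with the Infection Lemma --- and matches the paper. But there is a genuine gap at the very first step, and it is exactly the one the paper spends Section~\ref{sec:finding-an-injective} on.

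You write that the two distinct centered $\sigma_n$-representations ``lie on different shift orbits, hence \ldots\ their $\sigma_n$-cutting-point sets are genuinely different near the origin.'' This inference fails: distinct centered representations of an aperiodic point can have \emph{identical} cutting-point sets when $\sigma_n$ is not injective on $X_{\boldsymbol{\sigma}}^{(n+1)}$ (think of $\sigma(a)=\sigma(b)$ for $a\neq b$). The Infection Lemma and the propagation Lemma~\ref{l:cutpropagation} both require representations with \emph{no common cut}, not merely distinct ones, so without injectivity your amplification never gets started. The paper's fix is Lemma~\ref{lem:common_cuts}: in any block of $K-1$ consecutive non-recognizable levels (with $|\mathcal{A}_n|\le K$) there is at least one level $n_i$ at which $\sigma_{n_i}$ is injective on the relevant image; Lemma~\ref{l:ikcuts} then upgrades the two distinct representations to two $\sigma_{[n_i,(i+1)(K-1))}$-representations with no common cut. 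Your mention of Lemma~\ref{l:reducealphabet} at the end is the right ingredient, but it has to be woven in \emph{before} the amplification, not after.

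Two smaller points. First, the pigeonhole you sketch (``select an infinite subsequence of levels on which the relevant languages are compatible'') is too coarse: the paper builds a rooted forest whose vertices are languages $\mathcal{L}_y$ at the levels~$n_i$, edges given by $\mathcal{L}_{\sigma_{[n_i,n_{i+1})}(x)}\subseteq\mathcal{L}_y$, and uses the leaf bound~$L$ to find a root-to-leaf path carrying $K+\lfloor\log_2(K-1)\rfloor$ special vertices; this is what makes the doubling via Lemmas~\ref{l:oneforall} and~\ref{l:cutpropagation} go through. Second, your subsequential-limit step is unnecessary: once you have $2^{K-1}(K-1)+2$ representations with pairwise no common cut of a single aperiodic point at one fixed level, Lemma~\ref{l:infection} already gives the contradiction.
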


We will even show that there are less than $(K-1) ((K-1+\lfloor \log_2(K-1) \rfloor)  L+1)$ levels where $\boldsymbol{\sigma}$ is not recognizable when $|\mathcal A_n|\leq K$ and $   |\{\mathcal{L}_x:\, x \in X_{\boldsymbol{\sigma}}^{(n)}\}\leq L$. 
Moreover, by Proposition~\ref{p:minimalcomponents} below, we have $L \le (K^2-3K+5)K/3$ when $\boldsymbol{\sigma}$ is everywhere growing.
If $\sigma_n$ is injective on $\mathcal{A}_{n+1}^\mathbb{Z}$ for all $n\ge0$, then we can omit the factor $K-1$, i.e., there are at most $(K-1+\lfloor \log_2(K-1) \rfloor)  L$ levels where $\boldsymbol{\sigma}$ is not recognizable.

\subsection{On $\sigma$-representations with no common $\sigma$-cut}\label{sec:nocu}
An important ingredient for the proof of Proposition~\ref{p:evrec2} is the following lemma, which is due to \cite{Down:2008}, except that Downarowicz and Maass only prove that the shift generated by~$y$ has periodic points and assume that $y$ has $|\mathcal{A}|^{|\mathcal{A}|+1}+1$ different $\sigma$-representations with pairwise no common $\sigma$-cut.
We give here a proof which is close to that of H{\o}ynes \cite{Hoynes:2017}.
Note that, contrary to a conjecture of H{\o}ynes, the bound $2^{|\mathcal{A}|-1} (|\mathcal{A}|-1) + 2$ in Lemma~\ref{l:infection} is not optimal, since, for $|\mathcal{A}| = 2$, each aperiodic $y \in \mathcal{B}^\mathbb{Z}$ has a unique centered $\sigma$-representation by Theorem~\ref{t:recognizable}.

\begin{lemma}[Infection Lemma] \label{l:infection}
Let $\sigma:\, \mathcal{A} \to \mathcal{B}^+$ be a morphism. 
If $y \in \mathcal{B}^\mathbb{Z}$ has $2^{|\mathcal{A}|-1} (|\mathcal{A}|-1) + 2$ different $\sigma$-representations with pairwise no common $\sigma$-cut, then $y$ is periodic. 
\end{lemma}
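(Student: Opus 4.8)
The plan is to argue by contradiction: suppose $y$ is aperiodic and has $N := 2^{|\mathcal A|-1}(|\mathcal A|-1)+2$ distinct $\sigma$-representations $(k_1,x^{(1)}),\dots,(k_N,x^{(N)})$ with pairwise no common $\sigma$-cut. The strategy, following Downarowicz--Maass and H{\o}ynes, is a propagation (``infection'') argument: we track, for each position $p \in \mathbb Z$ of $y$, the set of representations whose cutting points lie at or below~$p$ together with the letter of $\mathcal A$ that the relevant representation reads at that position, and show that the data determining the future of the representations recurs, forcing periodicity. Concretely, for a position $p$ and a representation $(k_i,x^{(i)})$, let $a_i(p)$ be the letter $x^{(i)}_\ell$ such that $p$ lies in the block $\sigma(x^{(i)}_\ell)$ of the decomposition $y = T^{k_i}\sigma(x^{(i)})$, and let $d_i(p) \ge 0$ be the offset of $p$ inside that block. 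The key point is that since no two representations share a $\sigma$-cut, between any two consecutive cuts of the whole family only one representation has a cut, and this lets us encode the local configuration at $p$ by a bounded amount of combinatorial data.

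The main combinatorial device is to consider, at each cutting point $c$ of the family (ordered increasingly, say $c_0 < c_1 < c_2 < \cdots$), the ``state'' consisting of: which representation $i(j)$ has its cut at $c_j$; the letter it is about to start reading, i.e.\ the next letter of $x^{(i(j))}$; and, for every other representation $i'$, the offset $d_{i'}(c_j)$ inside its current block. Since $|\mathcal A|$ is bounded and each offset is bounded by $\max_a|\sigma(a)|$, naively this is too many states; the refinement — and this is exactly where the bound $2^{|\mathcal A|-1}(|\mathcal A|-1)+2$ comes from — is to track instead only a \emph{subset} of the representations (those that remain ``active'' in the infection) together with one distinguished reading letter. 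One shows that once one representation's cut is determined, the letter it reads at a block boundary of another representation is forced (there are $|\mathcal A|$ choices of letter but at a boundary the reading is pinned by the already-known block below), so the number of genuinely free parameters is a subset of $\{1,\dots,|\mathcal A|\}$ of size at most $|\mathcal A|-1$ together with which of those is currently ``cutting,'' giving at most $2^{|\mathcal A|-1}(|\mathcal A|-1)+1$ states before the argument, hence among $N$ representations two cutting points carry the same state. From there, the stretch of $y$ between those two cutting points can be concatenated periodically: the equal states mean that all active representations can be ``reset,'' and their decompositions of $y$ extend consistently, so $y$ agrees with a periodic point on larger and larger windows, hence is periodic — the desired contradiction.

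I would organize the write-up as: (1) set up notation for the cutting points of the family and the offset functions $d_i(\cdot)$; (2) prove the \emph{propagation step} — given the state at cutting point $c_j$, the state at $c_{j+1}$ is determined, and conversely enough of the past is recoverable — using that $\sigma$ is non-erasing and that no two representations share a cut; (3) count states to get the pigeonhole collision; (4) deduce periodicity of $y$ from a collision by exhibiting an explicit period, namely the distance between the two colliding cutting points (or a bounded multiple of it). The \textbf{main obstacle} is step (2): making precise exactly what data must be carried in the ``state'' so that it is simultaneously (a) forward-deterministic, (b) bounded by $2^{|\mathcal A|-1}(|\mathcal A|-1)+1$, and (c) strong enough that a repeat genuinely forces periodicity rather than mere recurrence of a local pattern. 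The subtlety is that a representation which is not currently ``active'' (its cut is far in the future) can still influence $y$, so one must verify that reactivating it after a state-collision is consistent — this is the technical heart, and it is where aperiodicity of the ambient point, together with the non-erasing hypothesis, is used to rule out degenerate overlaps. I expect the bound to come out exactly as stated by being careful that at a boundary one of the $|\mathcal A|$ possible letters is excluded (it would create a common cut), trimming $|\mathcal A|$ to $|\mathcal A|-1$ in the exponent.
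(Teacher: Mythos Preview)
Your state-machine/pigeonhole approach has a genuine gap at exactly the point you flag as the main obstacle, and I don't see how to close it. The difficulty is that any ``state'' rich enough to be forward-deterministic must record, for each of the $N = 2^{|\mathcal A|-1}(|\mathcal A|-1)+2$ representations, at least the current letter being read and the offset within that $\sigma$-block; the offsets alone range up to $\max_a|\sigma(a)|-1$, which is not bounded in terms of $|\mathcal A|$, so the state space is enormous, not of size $2^{|\mathcal A|-1}(|\mathcal A|-1)+1$. Your proposed refinement---tracking only a subset of $\{1,\dots,|\mathcal A|\}$ of size $\le |\mathcal A|-1$ plus a distinguished letter---is not forward-deterministic: knowing which letters are ``active'' and which one is cutting tells you nothing about when the other representations will next cut, so a collision of such coarse states does not force the representations to realign and hence does not force periodicity of~$y$. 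The sentence ``one of the $|\mathcal A|$ possible letters is excluded (it would create a common cut)'' is also not right: different representations can perfectly well read the same letter at a given position without sharing a cut.

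The paper's argument is of a completely different nature and avoids state-counting altogether. The key idea is a Fine--Wilf overlap: if two representations both read the letter $j$ at position~$n$, then two copies of $\sigma(j)$ overlap across~$n$, forcing $y_{n-\ell(j)} = y_n = y_{n+\ell(j)}$ where $\ell(j)$ is the least period of $\sigma(j)$. One then orders the letters by $\ell(\cdot)$ and shows, by a short pigeonhole on the partition $\{i : x^{(i)}\text{ reads }j\text{ at }n\}$, that either some fixed $j$ has $|I_{n,j}|\ge 2$ for \emph{all} $n$ (giving global $\ell(j)$-periodicity immediately), or there is a minimal $j$ for which $|I_{n,j}|$ exceeds $2^{j-1}(|\mathcal A|-1)+1$ somewhere, and then an inductive propagation (using the ordering $\ell(j)\le \ell(J)$ for $J\ge j$) pushes the relation $y_N=y_{N+\ell(j)}$ to all~$N$. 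The exact constant $2^{|\mathcal A|-1}(|\mathcal A|-1)+2$ arises from the telescoping sum $\sum_{j=1}^{K-1}(2^{j-1}(K-1)+1)+1$, not from counting states of an automaton. I would abandon the state-machine framing and work with these period overlaps instead.
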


\begin{proof}
Let $K = |\mathcal{A}|$, $L = 2^{K-1} (K-1) + 2$, and let $(k^{(i)}, x^{(i)})$, $1 \le i \le L$, be $\sigma$-representations of~$y$ such that $C_\sigma(k^{(i)}, x^{(i)}) \cap C_\sigma(k^{(i')}, x^{(i')}) = \emptyset$ for all $i \ne i'$. 
For $n \in \mathbb{Z}$, $1 \le i \le L$, let $h_n^{(i)},\, m_n^{(i)} \in \mathbb{Z}$ be such that $h_n^{(i)}$ is the $m_n^{(i)}$-th $\sigma$-cutting point of $(k^{(i)}, x^{(i)})$ and $h_n^{(i)} \le n < h_n^{(i)} + |\sigma(x^{(i)}_{m_n^{(i)}})|$.

For $a \in \mathcal{A}$, let $\ell(a)$ be the least period length of $\sigma(a)$, i.e., if $\sigma(a) = z_1 z_2 \cdots z_{|\sigma(a)|}$, then $\ell(a) \ge 1$ is minimal such that $z_{j+\ell(a)} = z_j$ for all $1 \le j \le |\sigma(a)|-\ell(a)$; we have $\ell(a) \le |\sigma(a)|$.
Assume w.l.o.g.\ that $\mathcal{A} = \{1,2,\ldots,K\}$ and $\ell(1) \le \ell(2) \le \cdots \le \ell(K)$.
For $1 \le j \le K$, set
\[
I_{n,j} = \{1 \le i \le L:\, x^{(i)}_{m_n^{(i)}} = j\}.
\]

If $|I_{n,j}| \ge 2$, then we can choose $i, i' \in I_{n,j}$ with $h_n^{(i)} < h_n^{(i')}$, and we have $y_{\tilde{n}} = y_{\tilde{n}+\ell(j)}$ for all $h_n^{(i)} \le \tilde{n} < h_n^{(i')} + |\sigma(j)| - \ell(j)$.
Since $h_n^{(i)} \le h_n^{(i')} - \ell(j) \le n-\ell(j)$ and $h_n^{(i')} + |\sigma(j)| - \ell(j) \ge h_n^{(i)} + |\sigma(j)| > n$, we obtain that $y_{n-\ell(j)} = y_n = y_{n+\ell(j)}$; see also Figure~\ref{f:infection}.
Therefore, if for some $j \in \mathcal{A}$ we have $|I_{n,j}| \ge 2$ for all $n \in \mathbb{Z}$, then $T^{\ell(j)}(y) = y$ and thus $y$ is periodic. 

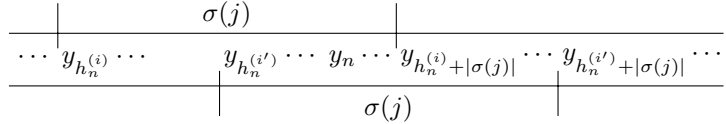
\begin{figure}[ht]
\begin{tikzpicture}
\draw(-4,0)--(5.5,0) (-4,.7)--(5.5,.7) (-3.35,.5)--(-3.35,1.1) (1.15,.5)--(1.15,1.1) (-1.2,.2)--(-1.2,-.4) (3.3,.2)--(3.3,-.4);
\node[above] at (-1.1,.65){$\sigma(j)$};
\node[above] at (-3,0){$\cdots\ y_{h_n^{(i)}} \cdots$};
\node[above] at (0,0){$y_{h_n^{(i')}} \cdots\ y_n\cdots$};
\node[above] at (2.25,0){$y_{h_n^{(i)}+|\sigma(j)|} \cdots$};
\node[above] at (4.45,0){$y_{h_n^{(i')}+|\sigma(j)|}\cdots$};
\node[below] at (1.05,0){$\sigma(j)$};
\end{tikzpicture}
\caption{Illustration of the case $i, i' \in I_{n,j}$ in the proof of Lemma~\ref{l:infection}.} \label{f:infection}
\end{figure}

Assume in the following that $|I_{n,K}| \le 1$ for some $n \in \mathbb{Z}$. 
Then we have 
\begin{equation} \label{e:naj}
|I_{n,j}| > 2^{j-1} (K-1) + 1
\end{equation}
for some $1 \le j < K$, as $\sum_{j=1}^K |I_{n,j}| \le \sum_{j=1}^{K-1} (2^{j-1} (K-1) + 1) + 1 = 2^{K-1} (K-1) + 1 < L = \sum_{j=1}^K |I_{n,j}|$ otherwise. 
Let $j$ be minimal such that \eqref{e:naj} holds for some $n \in \mathbb{Z}$.

We claim that $T^{\ell(a_j)}(y) = y$, contradicting that $y$ is aperiodic.  
To prove the claim, choose $n$ such that \eqref{e:naj} holds.
We show first that $y_{N} = y_{N+\ell(j)}$ for all $N \ge n$, by recursion on~$N$.
Assume that $y_{n'} = y_{n'+\ell(j)}$ for all $n \le n' < N$.
By the minimality of~$j$, we have
\[
\sum_{J=1}^{j-1} |I_{N,J}| \le \sum_{J=1}^{j-1} (2^{J-1} (K-1) + 1) = 2^j (K-1) +j-K,
\]
thus $|I_{n,j} \cap \bigcup_{J=j}^K I_{N,J}| > K-j+1$ and, hence, $|I_{n,j} \cap I_{N,J}| \ge 2$ for some $J \ge j$. 
For this~$J$, we have seen above that $y_{N-\ell(J)} = y_N$ and $y_{N+\ell(j)-\ell(J)} = y_{N+\ell(j)}$, as $0 \le \ell(j) \le \ell(J)$. 
In particular, we have $y_N = y_{N+\ell(j)}$ if $J = j$. 
If $J \ne j$, then $i \in I_{n,j} \cap I_{N,J}$ and $N \ge n$ imply that $h_N^{(i)} > n$, thus $n < N-\ell(J) < N$ and, hence, $y_{N-\ell(J)} = y_{N+\ell(j)-\ell(J)}$ by our assumption.
Therefore, we have $y_N = y_{N+\ell(j)}$ for all $N \ge n$.
By symmetry, this also holds for all $N < n$, i.e., $T^{\ell(j)}(y) = y$. 
\end{proof}

The proof of Proposition~\ref{p:evrec2} will be done by contradiction. 
Assuming non-recognizability for aperiodic points, we will construct many $\sigma$-representations with pairwise no common $\sigma$-cut, which contradicts Lemma~\ref{l:infection}. 
In this section, we will therefore establish a series of lemmas on $\sigma$-representations with pairwise no common $\sigma$-cut.

First we show that non-recognizable morphisms $\sigma$ that are injective on bi-infinite words give $\sigma$-representations with no common $\sigma$-cut.
To this end we use the following observation. 

\begin{lemma} \label{l:cuty}
Let $\sigma:\, \mathcal{A} \to \mathcal{B}^+$ be a morphism,  and let $(k,x), (k',x')$ be centered $\sigma$-representations of some $y \in \mathcal{B}^\mathbb{Z}$.
If $T^\ell(x) = T^{\ell'}(x')$ and the $\ell$-th $\sigma$-cutting point of $(k,x)$ coincides with the $\ell'$-th $\sigma$-cutting point of $(k',x')$, then $(k,x) = (k',x')$. 
\end{lemma}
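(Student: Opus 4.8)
The plan is to unwind the definitions and track where the common cutting point sits relative to the origin in each representation. Write $N$ for the index such that the $\ell$-th $\sigma$-cutting point of $(k,x)$ (equivalently the $\ell'$-th $\sigma$-cutting point of $(k',x')$) equals $N$. By the definition of cutting points, $N = |\sigma(x_{[0,\ell)})| - k$ when $\ell \ge 0$ and $N = -|\sigma(x_{[\ell,0)})| - k$ when $\ell < 0$, and similarly with primes; in either case $T^N(y) = \sigma(T^\ell(x)) = \sigma(T^{\ell'}(x'))$, using the hypothesis $T^\ell(x) = T^{\ell'}(x')$ and the fact that applying $\sigma$ to a shifted sequence and then shifting by the length of the consumed prefix gives back $T^N(y)$.

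Next I would observe that, since $(k,x)$ and $(k',x')$ are \emph{centered}, the origin lies inside the block $\sigma(x_0)$ (resp.\ $\sigma(x'_0)$), i.e.\ $0 \le k < |\sigma(x_0)|$ and $0 \le k' < |\sigma(x'_0)|$. I would then argue that from $T^N(y) = \sigma(T^\ell x) = \sigma(T^{\ell'} x')$ together with $T^\ell x = T^{\ell'} x'$ we get, after composing with the appropriate shift back to bring the origin into the picture, that $(k,x)$ and $(k',x')$ are both centered $\sigma$-representations of $y$ that are related along a single orbit: indeed, both equal $T^{-N}\sigma(T^\ell x)$-type expressions, so $x$ and $x'$ are shifts of one another and $y = T^k\sigma(x) = T^{k'}\sigma(x')$ with $x' = T^{\ell'-\ell}(x)$ (using $T^\ell x = T^{\ell'}x'$, hence $x' = T^{\ell-\ell'}(x)$).

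At this point the statement follows from Lemma~\ref{lem:uniqueorbit} \emph{provided $y$ is aperiodic}, but the lemma here is stated with no aperiodicity hypothesis, so I expect the real content — and the main obstacle — to be handling the periodic case directly. Concretely, if $y$ is periodic then two distinct centered representations along an orbit are possible a priori, so one must use the extra information that the \emph{same} cutting point $N$ is hit with the \emph{matching} tail $T^\ell x = T^{\ell'} x'$. The key computation: the distance from $N$ back to the $0$-th cutting point of $(k,x)$ is exactly $k$ (that is, $-k$ is the $0$-th cutting point, so $N - (-k) = N + k = |\sigma(x_{[0,\ell)})|$ for $\ell \ge 0$), and likewise $N + k' = |\sigma(x'_{[0,\ell')})|$. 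Since $x' = T^{\ell-\ell'}(x)$ and $T^\ell x = T^{\ell'}x'$, the words $x_{[0,\ell)}$ and $x'_{[0,\ell')}$ are the initial segments of the two representations up to the common cut $N$; comparing $|\sigma(x_{[0,\ell)})| = N + k$ and $|\sigma(x'_{[0,\ell')})| = N + k'$ forces a relation between $k$ and $k'$, and then the centering inequalities $0 \le k < |\sigma(x_0)|$, $0 \le k' < |\sigma(x_0')|$ pin down $k = k'$, whence $\ell = \ell'$ and $x = x'$.

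I would therefore write the proof as: (i) set $N$ to be the common cutting point and record $T^N(y) = \sigma(T^\ell x) = \sigma(T^{\ell'}x')$; (ii) from $T^\ell x = T^{\ell'} x'$ deduce $x' = T^{\ell-\ell'}(x)$; (iii) compute $k = N - c$ and $k' = N - c'$ where $c, c'$ are the $0$-th cutting points of the two representations, and translate the identity of the tails into $c = c'$ (both are determined by the same bi-infinite labelled tiling to the right of $N$, up to the alignment fixed by $N$); (iv) conclude $k = k'$ and hence $(k,x) = (k',x')$. I expect step (iii) — making precise that hitting the same cut with the same right tail forces the same left history, without invoking aperiodicity — to be where care is needed; the cleanest route may be to apply $\sigma$ freely being non-erasing and simply chase lengths, since $|\sigma(w)|$ is determined by the letters of $w$ and the $\sigma(a)$'s.
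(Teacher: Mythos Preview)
Your approach is the same as the paper's, and your diagnosis is correct: one must do a direct length computation rather than invoke Lemma~\ref{lem:uniqueorbit}, which would need aperiodicity. However, your step~(iii) is vague at exactly the spot that matters. Saying ``translate the identity of the tails into $c=c'$'' is not an argument, and your formula $k=N-c$ is off (the $0$-th cutting point is $c=-k$, so $N-c=N+k$, not $k$).

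What is missing is the precise identification the paper carries out. From $T^\ell(x)=T^{\ell'}(x')$ one has $x_{[0,\ell)}=x'_{[\ell'-\ell,\ell')}$ (assuming $\ell\ge0$; the other case is symmetric). Since $h:=N$ is the $\ell'$-th cutting point of $(k',x')$, stepping back $\ell$ blocks in the $(k',x')$-tiling shows that the $(\ell'-\ell)$-th cutting point of $(k',x')$ equals
\[
h-|\sigma(x'_{[\ell'-\ell,\ell')})|=h-|\sigma(x_{[0,\ell)})|=-k,
\]
and the $(\ell'-\ell+1)$-st equals $-k+|\sigma(x'_{\ell'-\ell})|=-k+|\sigma(x_0)|$. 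Centering of $(k,x)$ gives $-k\le 0<|\sigma(x_0)|-k$, so $0$ lies in the block of the $(k',x')$-tiling indexed by $\ell'-\ell$. Centering of $(k',x')$ says the block containing $0$ has index~$0$. Hence $\ell'-\ell=0$, whence $x'=T^{\ell-\ell'}(x)=x$ and $-k'=-k$. That is the entire argument; your sketch gestures toward it but does not pin down which cutting point of $(k',x')$ the number $-k$ is, and that identification is what makes the centering inequalities bite.
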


\begin{proof}
Assume that $\ell \ge 0$; the case $\ell < 0$ is similar. 
If $h$ is the $\ell$-th cutting point of $(k,x)$, then $y_{[-k,h)} = \sigma(x_{[0,\ell)}) = \sigma(x'_{[\ell'-\ell,\ell')})$. 
Since $h$ is also the $\ell'$-th cutting point of $(k',x')$, the $(\ell'{-}\ell)$-th and $(\ell'{-}\ell+1)$-st $\sigma$-cutting points of $(k',x')$ are $-k$ and $|\sigma(x'_{\ell'-\ell})|-k = |\sigma(x_0)|-k$ respectively.
As $(k,x)$ is centered, we have $-k \le 0 < |\sigma(x_0)|-k$.
As $(k',x')$ is centered, we obtain that $\ell' - \ell = 0$ and $k' = k$, i.e., $(k,x) = (k',x')$. 
\end{proof}

\begin{lemma} \label{l:injectivecut}
Let $\sigma:\, \mathcal{A} \to \mathcal{B}^+$ be a morphism that is injective on~$\mathcal{A}^\mathbb{Z}$.
If $(k,x), (k',x')$ are distinct centered $\sigma$-representations of some $y \in \mathcal{B}^\mathbb{Z}$, then $(k,x)$ and $(k',x')$ have no common $\sigma$-cut. 
\end{lemma}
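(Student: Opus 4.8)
The plan is a proof by contradiction. Suppose $(k,x)\neq(k',x')$ are centered $\sigma$-representations of $y$ that do have a common $\sigma$-cut $h$, and let $\ell,\ell'\in\mathbb{Z}$ be such that $h$ is the $\ell$-th $\sigma$-cutting point of $(k,x)$ and the $\ell'$-th $\sigma$-cutting point of $(k',x')$. The crucial point is that at a common cut both representations ``restart'': I claim that $T^h(y)=\sigma(T^\ell(x))$ and $T^h(y)=\sigma(T^{\ell'}(x'))$. Granting this, $\sigma(T^\ell(x))=\sigma(T^{\ell'}(x'))$, so injectivity of $\sigma$ on $\mathcal{A}^\mathbb{Z}$ gives $T^\ell(x)=T^{\ell'}(x')$; since moreover the $\ell$-th $\sigma$-cutting point of $(k,x)$ coincides with the $\ell'$-th $\sigma$-cutting point of $(k',x')$ (both equal $h$), Lemma~\ref{l:cuty} yields $(k,x)=(k',x')$, a contradiction. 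Hence $(k,x)$ and $(k',x')$ have no common $\sigma$-cut.

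It remains to justify the identity $T^h(y)=\sigma(T^\ell(x))$, which I would obtain by unwinding the definitions. Writing $y=T^k\sigma(x)$, the factor $\sigma(x_{[0,\ell)})$ of $\sigma(x)$ occupies the positions $[-k,\,|\sigma(x_{[0,\ell)})|-k)=[-k,h)$ of $y$ when $\ell\ge0$, and symmetrically $\sigma(x_{[\ell,0)})$ occupies $[h,0)$ when $\ell<0$; in either case the block $\sigma(x_\ell)$ begins at position $h$ of $y$. Consequently the bi-infinite word $\cdots\sigma(x_{\ell-1})\sigma(x_\ell)\sigma(x_{\ell+1})\cdots$, read with $\sigma(x_\ell)$ starting at the origin, is exactly $T^h(y)$; by definition of the extension of $\sigma$ to $\mathcal{A}^\mathbb{Z}$, this word is $\sigma(T^\ell(x))$. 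Running the same computation with $(k',x')$ in place of $(k,x)$ gives $T^h(y)=\sigma(T^{\ell'}(x'))$.

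The only delicate part is this last bookkeeping — in particular handling the two sign cases $\ell\ge0$ and $\ell<0$ in the definition of the $\ell$-th cutting point — but it is routine once the definitions are written out. I emphasise that aperiodicity of $y$ plays no role here: the final step invokes Lemma~\ref{l:cuty}, which holds for arbitrary $y$, rather than Lemma~\ref{lem:uniqueorbit}.
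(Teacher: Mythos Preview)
Your proof is correct and follows essentially the same approach as the paper's: assume a common cut, deduce $\sigma(T^\ell(x)) = \sigma(T^{\ell'}(x'))$, apply injectivity, then invoke Lemma~\ref{l:cuty}. Your version is more explicit in justifying the identity $T^h(y) = \sigma(T^\ell(x))$, which the paper simply asserts, but the logical skeleton is identical.
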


\begin{proof}
Let $(k,x), (k',x')$ be distinct centered $\sigma$-representations of some $y \in \mathcal{B}^\mathbb{Z}$.
Suppose that the $\ell$-th $\sigma$-cutting point of $(k,x)$ equals the $\ell'$-th $\sigma$-cutting point of $(k',x')$ for some $\ell, \ell' \in \mathbb{Z}$. 
Then we have $\sigma(T^\ell(x)) = \sigma(T^{\ell'}(x'))$ and thus $T^\ell(x) = T^{\ell'}(x')$ by the injectivity of~$\sigma$.
By Lemma~\ref{l:cuty}, this contradicts that $(k,x) \ne (k',x')$. 
\end{proof}

Note that two $\sigma$-representations with no common $\sigma$-cut always give rise to two distinct centered $\sigma$-representations (with no common $\sigma$-cut). 
Therefore, we omit the requirement of being centered if we have no common $\sigma$-cut.
The next lemma implies that the property of having no common cut propagates ``down'' in $S$-adic systems. 

\begin{lemma}\label{l:cutpropagation}
Let $\sigma:\, \mathcal{A} \to \mathcal{B}^+$ and $\tau:\, \mathcal{B}\to\mathcal{C}^+$ be morphisms, $z \in \mathcal{C}^\mathbb{Z}$. 
If $z$ has $\tau$-representations $(\ell,y)$ and $(\ell',y')$ with no common $\tau$-cut, and $(k,x), (k',x')$ are $\sigma$-representations of $y$ and~$y'$ respectively, then $(\ell+|\tau(y_{[-k,0)})|,x)$ and $(\ell'+|\tau(y'_{[-k',0)})|,x')$ are $\tau \sigma$-representations of~$z$ with no common $\tau \sigma$-cut.
\end{lemma}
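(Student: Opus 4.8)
The plan is to unwind the definitions and track cutting points through the composition $\tau\sigma$, using the bijection between centered $\tau\sigma$-representations and pairs of centered $\sigma$- and $\tau$-representations that was established in the first paragraph of the proof of Lemma~\ref{lem:tele}. First I would observe that a $\tau\sigma$-cutting point of a representation $(m,x)$ of~$z$ is an integer of the form $|\tau\sigma(x_{[0,p)})| - m$ (for $p \ge 0$, similarly for $p<0$), and that since $\sigma$ is non-erasing, $\sigma(x_{[0,p)})$ ranges exactly over the prefixes of the one-sided word $\sigma(x_{[0,\infty)})$ that end at a $\sigma$-cutting point of $(k,x)$. Consequently every $\tau\sigma$-cutting point of $(\ell+|\tau(y_{[-k,0)})|,x)$ is a $\tau$-cutting point of $(\ell,y)$ — namely, writing $h = |\sigma(x_{[0,p)})| - k$ for a $\sigma$-cutting point of $(k,x)$, the corresponding $\tau\sigma$-cutting point equals $|\tau(y_{[-k,h)})| - (\ell+|\tau(y_{[-k,0)})|) = |\tau(y_{[0,h)})| - \ell$, which is a $\tau$-cutting point of $(\ell,y)$. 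The same computation applies to $(\ell'+|\tau(y'_{[-k',0)})|,x')$ and $(\ell',y')$.

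With that inclusion in hand, the argument concludes quickly: if $(\ell+|\tau(y_{[-k,0)})|,x)$ and $(\ell'+|\tau(y'_{[-k',0)})|,x')$ had a common $\tau\sigma$-cut, then by the previous paragraph that common value would simultaneously be a $\tau$-cutting point of $(\ell,y)$ and of $(\ell',y')$, i.e. $C_\tau(\ell,y)\cap C_\tau(\ell',y')\ne\emptyset$, contradicting the hypothesis that $(\ell,y)$ and $(\ell',y')$ have no common $\tau$-cut. I should also check that $(\ell+|\tau(y_{[-k,0)})|,x)$ really is a $\tau\sigma$-representation of~$z$, which is immediate: $z = T^\ell\tau(y) = T^\ell\tau(T^k\sigma(x)) = T^{\ell+|\tau(y_{[-k,0)})|}\tau\sigma(x)$, exactly the relation noted at the end of the first paragraph of the proof of Lemma~\ref{lem:tele} (here one does not even need the representations to be centered, only that $y = T^k\sigma(x)$, $y' = T^{k'}\sigma(x')$).

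The only mildly delicate point — and the one I would be most careful about — is the index bookkeeping when the indices $p$, $k$, $\ell$ (or their primed versions) are negative, since the definition of cutting points splits into the $\ge 0$ and $<0$ cases and the formula $|\tau(y_{[-k,h)})|$ must be read as a signed quantity (a difference of lengths) when $h$ and $-k$ are on opposite sides of~$0$. This is entirely routine but it is where a sign error could creep in; I would handle it by consistently writing cutting points as $|\sigma(x_{[0,p)})|-k$ with the convention $|\sigma(x_{[0,p)})| = -|\sigma(x_{[p,0)})|$ for $p<0$, so that a single formula covers both cases and the composition identity $|\tau\sigma(x_{[0,p)})| = |\tau(y_{[0,h)})| + |\tau(y_{[-k,0)})|$ (with $h = |\sigma(x_{[0,p)})|-k$, all lengths signed) holds uniformly. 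No new ideas beyond the definitions and Lemma~\ref{lem:tele} are needed.
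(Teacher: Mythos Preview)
Your proposal is correct and follows essentially the same approach as the paper: both argue by contradiction, showing that a common $\tau\sigma$-cut would yield a common $\tau$-cut via the identity $|\tau\sigma(x_{[0,p)})| - (\ell+|\tau(y_{[-k,0)})|) = |\tau(y_{[0,h)})| - \ell$ with $h = |\sigma(x_{[0,p)})|-k$. Your formulation is slightly cleaner in that you explicitly state the inclusion $C_{\tau\sigma}(\ell+|\tau(y_{[-k,0)})|,x) \subseteq C_\tau(\ell,y)$, whereas the paper writes out the equality of the two sides directly; your handling of the sign bookkeeping via a uniform signed-length convention is also a good way to streamline what the paper dismisses with ``the other cases are treated similarly.''
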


\begin{proof}
Suppose that $(\ell+|\tau(y_{[-k,0)})|,x)$ and $(\ell'+|\tau(y'_{[-k',0)})|,x')$ have a common $\tau \sigma$-cut, which is the $m$-th cut of the former and the $m'$-th cut of the latter $\tau \sigma$-representation. 
We assume that both $m$ and $m'$ are positive; the other cases are treated similarly. 
Then 
\[
|\tau\sigma(x_{[0,m)})| - |\tau(y_{[-k,0)})| - \ell = |\tau\sigma(x'_{[0,m')})| - |\tau(y'_{[-k',0)})| - \ell',
\]
which implies that for $j = |\sigma(x_{[0,m)})|$, $j' = |\sigma(x'_{[0,m')})|$, we have
\[
|\tau(y_{[-k,j-k)}|  -  |\tau(y_{[-k,0)})| - \ell = |\tau(y'_{[-k',j'-k')})| - |\tau(y'_{[-k',0)})| - \ell',
\]
i.e., $|\tau(y_{[0,j-k)})|-\ell = |\tau(y'_{[0,j'-k')})|-\ell'$.
This exhibits a common $\tau$-cut of $(\ell,y)$ and $(\ell',y')$.
\end{proof}

The property of having $\sigma$-representations with no common $\sigma$-cut also propagates within sets of points having the same language. 

\begin{lemma} \label{l:oneforall}
Let $\sigma:\, \mathcal{A} \to \mathcal{B}^+$ be a morphism, $(X,T)$ a shift with $X \subseteq \mathcal{A}^\mathbb{Z}$, $y \in \mathcal{B}^\mathbb{Z}$.
If $y$ has two $\sigma$-representations in~$X$ with no common $\sigma$-cut, then each $\tilde{y} \in \mathcal{B}^\mathbb{Z}$ with $\mathcal{L}_{\tilde{y}} \subseteq \mathcal{L}_y$ has two $\sigma$-representations in~$X$ with no common $\sigma$-cut. 
\end{lemma}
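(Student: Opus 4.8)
The plan is to transplant the two non-synchronized desubstitutions of~$y$ into ever longer windows of~$\tilde y$ and then pass to a limit in the (compact) shift~$X$. Write $(k,x)$ and $(k',x')$ for the two given $\sigma$-representations of~$y$ in~$X$, so that $C_\sigma(k,x)\cap C_\sigma(k',x')=\emptyset$. First I would fix, for every $\ell\ge1$, an integer $p_\ell$ with $\tilde y_j=y_{p_\ell+j}$ for all $-\ell\le j<\ell$; such a $p_\ell$ exists because $\tilde y_{[-\ell,\ell)}\in\mathcal L_{\tilde y}\subseteq\mathcal L_y$. Then $T^{p_\ell}(y)$ agrees with~$\tilde y$ on $[-\ell,\ell)$, hence $T^{p_\ell}(y)\to\tilde y$ as $\ell\to\infty$.

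Next, observe that $(k+p_\ell,x)$ and $(k'+p_\ell,x')$ are $\sigma$-representations of~$T^{p_\ell}(y)$ whose sets of $\sigma$-cutting points are $C_\sigma(k,x)-p_\ell$ and $C_\sigma(k',x')-p_\ell$, still disjoint. Re-centering these (cf.\ Remark~\ref{rem:sigma}(1)), which leaves the set of $\sigma$-cutting points unchanged, I obtain centered $\sigma$-representations $(k_\ell,x^{(\ell)})$ and $(k'_\ell,x'^{(\ell)})$ of~$T^{p_\ell}(y)$; here $x^{(\ell)}$ and $x'^{(\ell)}$ are shifts of~$x$ and~$x'$, hence lie in~$X$, and $0\le k_\ell,k'_\ell<\max_{a\in\mathcal A}|\sigma(a)|$. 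Since $X$ is compact and the $k_\ell,k'_\ell$ range over a finite set, I would pass to a subsequence of~$\ell$'s along which $x^{(\ell)}\to\hat x\in X$, $x'^{(\ell)}\to\hat x'\in X$, $k_\ell\equiv\hat k$ and $k'_\ell\equiv\hat k'$. By continuity of~$\sigma$ and of the shift map, $T^{p_\ell}(y)=T^{k_\ell}\sigma(x^{(\ell)})\to T^{\hat k}\sigma(\hat x)$, and comparing with $T^{p_\ell}(y)\to\tilde y$ yields $\tilde y=T^{\hat k}\sigma(\hat x)$; similarly $\tilde y=T^{\hat k'}\sigma(\hat x')$. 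Thus $(\hat k,\hat x)$ and $(\hat k',\hat x')$ are $\sigma$-representations of~$\tilde y$ in~$X$.

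Finally I would check that these two $\sigma$-representations of~$\tilde y$ have no common $\sigma$-cut. If some $h$ lay in $C_\sigma(\hat k,\hat x)\cap C_\sigma(\hat k',\hat x')$, say as the $m$-th cutting point of $(\hat k,\hat x)$ and the $m'$-th cutting point of $(\hat k',\hat x')$, then---because each $\sigma$-cutting point depends only on the first coordinate and finitely many letters of the underlying sequence---for $\ell$ large along the subsequence $h$ would also be the $m$-th cutting point of $(k_\ell,x^{(\ell)})$ and the $m'$-th cutting point of $(k'_\ell,x'^{(\ell)})$, so $h\in C_\sigma(k_\ell,x^{(\ell)})\cap C_\sigma(k'_\ell,x'^{(\ell)})$, contradicting the disjointness noted above. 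I expect the only delicate point to be this last piece of bookkeeping, together with the verification that re-centering and the shift $y\mapsto T^{p_\ell}(y)$ act on the cutting-point sets exactly as claimed; the underlying idea is just the limiting transplantation argument, so everything else is routine.
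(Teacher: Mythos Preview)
Your argument is correct and follows essentially the same route as the paper's proof: pick shifts $T^{p_\ell}(y)\to\tilde y$, carry along the two shifted (then re-centered) $\sigma$-representations, extract convergent subsequences in the compact shift~$X$, and verify that a common cut in the limit would force one at a finite stage. Your final paragraph even spells out the ``no common cut passes to the limit'' step more carefully than the paper, which dispatches it in a single sentence.
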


\begin{proof}
Let $(k,x)$ and $(k',x')$ be $\sigma$-representations of~$y$ in~$X$ with no common $\sigma$-cut.
For $\tilde{y}$ with $\mathcal{L}_{\tilde{y}} \subseteq \mathcal{L}_y$, there is a sequence $(n_i)$ such that $\tilde{y} = \lim_{i\to\infty} T^{n_i}(y)$. 
Clearly, $(k{+}n_i,x)$ and $(k'{+}n_i,x')$ are $\sigma$-representations of~$T^{n_i}(y)$ in~$X$ with no common $\sigma$-cut; let $(j_i, \tilde{x}_i)$ and $(j_i', \tilde{x}_i')$ be the corresponding centered $\sigma$-representations. 
Passing to a subsequence if necessary, we can assume that the sequences $(j_i, \tilde{x}_i)$ and $(j'_i, \tilde{x}_i')$ converge to $(j,\tilde{x})$ and $(j',\tilde{x}')$ respectively. 
Then $(j,\tilde{x})$ and $(j',\tilde{x}')$ are $\sigma$-representations of~$\tilde{y}$ in~$X$.
As all pairs $(j_i, \tilde{x}_i), (j_i', \tilde{x}_i')$ have no common $\sigma$-cut, this also holds for $(j,\tilde{x}), (j',\tilde{x}')$.
\end{proof}

We can now prove Proposition~\ref{p:evrec2} under additional assumptions; the general case is proved in Section~\ref{sec:proof2} below.

\begin{proof}[Proof of Proposition~\ref{p:evrec2} for minimal shifts and injective substitutions]
Let $\boldsymbol{\sigma}$ be a directive sequence 
such that $|\mathcal{A}_n| \le K$, $(X_{\boldsymbol{\sigma}}^{(n)},T)$ is minimal, and $\sigma_n$ injective on $X_{\boldsymbol{\sigma}}^{(n+1)}$ for all $n\ge0$. 
Let $\ell = K + \lfloor \log_2(K-1) \rfloor $ and suppose that $\boldsymbol{\sigma}$ is not recognizable for aperiodic points at levels $n_1 < n_2 < \dots < n_\ell$.
Then, by Lemmas~\ref{l:injectivecut} and~\ref{l:oneforall}, each aperodic point $y \in X_{\boldsymbol{\sigma}}^{(n_i)}$, $1 \le i \le \ell$, has two $\sigma_{n_i}$-representations with no common $\sigma_{n_i}$-cut. 

Let $y \in X_{\boldsymbol{\sigma}}^{(n_1)}$ be an aperiodic point, and let $(k,x), (k',x')$ be $\sigma_{n_1}$-representations in~$X_{\boldsymbol{\sigma}}^{(n_1+1)}$ with no common $\sigma_{n_1}$-cut.
Both $x$ and~$x'$ have two $\sigma_{[n_1+1,n_2+1)}$-representations in~$X_{\boldsymbol{\sigma}}^{(n_2+1)}$ with no common $\sigma_{[n_1+1,n_2+1)}$-cut, thus $y$ has four $\sigma_{[n_1,n_2+1)}$-representations in~$X_{\boldsymbol{\sigma}}^{(n_2+1)}$ with pairwise no common $\sigma_{[n_1,n_2+1)}$-cut by Lemma~\ref{l:cutpropagation}.
Inductively, we get that $y$ has $2^\ell$ $\sigma_{[n_1,n_\ell+1)}$-representations in~$X_{\boldsymbol{\sigma}}^{(n_\ell+1)}$ with pairwise no common $\sigma_{[n_1,n_\ell+1)}$-cut.
As $2^\ell \ge 2^K (K-1) \ge 2^{K-1} (K-1)+2$, this contradicts Lemma~\ref{l:infection}.
\end{proof}

\subsection{Finding representations with no common cuts} \label{sec:finding-an-injective}
In this subsection, we show how to find points with two representations with no common cuts in sequences of morphisms that are non-recognizable at $K-1$ levels, where the alphabets are bounded by~$K$.
The following lemma, which tells us that, excluding the trivial case, composing enough morphisms on a bounded alphabet gives morphisms with a certain injectivity property relies on Lemma~\ref{l:reducealphabet} and is also inspired by~\cite{Down:2008}. 

\begin{lemma} \label{lem:common_cuts}
Let $K \ge 2$, let $\sigma_n:\, \mathcal{A}_{n+1} \to \mathcal{A}_n^+$, $1 \le n < K$, be morphisms with $|\mathcal{A}_K| \le K$.
If $|\sigma_{[1,K)}(\mathcal{A}_K^\mathbb{Z})| \ge 2$, then there exists $1 \le n < K$ such that $\sigma_n$ is injective on $\sigma_{[n+1,K)}(\mathcal{A}_K^\mathbb{Z})$. 
\end{lemma}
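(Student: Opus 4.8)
The plan is to argue by contradiction, assuming that for every $1 \le n < K$ the morphism $\sigma_n$ fails to be injective on $\sigma_{[n+1,K)}(\mathcal{A}_K^\mathbb{Z})$, and to derive that $|\sigma_{[1,K)}(\mathcal{A}_K^\mathbb{Z})| = 1$. The basic mechanism is a descending induction that combines non-injectivity with the alphabet-reduction provided by Lemma~\ref{l:reducealphabet}: each time a morphism is non-injective on the relevant image set, we may ``factor out'' a reduction that strictly decreases the effective alphabet size. Since $|\mathcal{A}_K| \le K$ and there are exactly $K-1$ composition steps available, the alphabet can be forced all the way down to a single letter, which makes the total composition constant on $\mathcal{A}_K^\mathbb{Z}$.

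More precisely, I would set $Z_K = \mathcal{A}_K^\mathbb{Z}$ and, for $n$ descending from $K-1$ to $1$, consider the set $Z_n = \sigma_{[n,K)}(\mathcal{A}_K^\mathbb{Z}) = \sigma_n(Z_{n+1})$. The first step is to observe that $\sigma_n$ being non-injective on $Z_{n+1}$ means there are two distinct points of $Z_{n+1}$ with the same $\sigma_n$-image; since $Z_{n+1} \subseteq \mathcal{A}_{n+1}^\mathbb{Z}$, the morphism $\sigma_n$ is then non-injective on $\mathcal{A}_{n+1}^\mathbb{Z}$, so Lemma~\ref{l:reducealphabet} applies and gives a decomposition $\sigma_n = \tilde{\sigma}_n \tau_n$ with $\tau_n : \mathcal{A}_{n+1} \to \tilde{\mathcal{A}}_{n+1}^+$, $\tilde{\sigma}_n : \tilde{\mathcal{A}}_{n+1} \to \mathcal{A}_n^+$, and $|\tilde{\mathcal{A}}_{n+1}| < |\mathcal{A}_{n+1}|$. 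The key bookkeeping step is to track how the composition $\sigma_{[1,K)}$ decomposes: after peeling off $\tau_n$ at each level, one obtains $\sigma_{[1,K)} = \tilde{\sigma}_1 \tau_1 \tilde{\sigma}_2 \tau_2 \cdots$ in a suitable telescoped form, and the point is that the innermost alphabet through which everything factors shrinks by at least one at each of the $K-1$ levels. Starting from $|\mathcal{A}_K| \le K$ and decreasing $K-1$ times, one reaches an alphabet of size $\le 1$, i.e., the image of the full composition lands in $\mathcal{B}^{\mathbb{Z}}$ through a one-letter alphabet, forcing $|\sigma_{[1,K)}(\mathcal{A}_K^\mathbb{Z})| = 1$, a contradiction.

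The subtlety — and the step I expect to require the most care — is that the reductions provided by Lemma~\ref{l:reducealphabet} at different levels act on different alphabets and the reduction at level $n$ produces a morphism $\tilde{\sigma}_n$ that is not literally $\sigma_n$, so one cannot naively iterate ``$\sigma_n$ is non-injective $\Rightarrow$ $\sigma_{n-1}$ acts on a smaller alphabet.'' What one needs to do is carry through the argument with a cleaner invariant: define $d_n$ to be the minimal cardinality of an alphabet $\mathcal{C}$ admitting a factorization $\sigma_{[n,K)} = \rho \, \pi$ with $\pi : \mathcal{A}_K \to \mathcal{C}^+$ and $\rho : \mathcal{C} \to \mathcal{A}_n^+$ and $|\pi(\mathcal{A}_K^\mathbb{Z})| = |\sigma_{[n,K)}(\mathcal{A}_K^\mathbb{Z})|$; then one shows $d_K \le K$ and, using non-injectivity of $\sigma_{n}$ on $Z_{n+1}$ together with Lemma~\ref{l:reducealphabet}, that $d_n < d_{n+1}$. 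After $K-1$ steps we get $d_1 \le 1$, whence $|\sigma_{[1,K)}(\mathcal{A}_K^\mathbb{Z})| = |\pi(\mathcal{A}_K^\mathbb{Z})| \le 1$ for the associated $\pi$ into a one-letter alphabet, contradicting the hypothesis. I would also double-check the edge cases: the base case $K = 2$ reduces to ``$\sigma_1$ non-injective on $\mathcal{A}_2^\mathbb{Z}$ and $|\mathcal{A}_2| \le 2$ forces $|\sigma_1(\mathcal{A}_2^\mathbb{Z})| = 1$,'' which is exactly (part of) Lemma~\ref{l:injectiveinvertible}, and that the telescoping identities $X_{\boldsymbol{\sigma}'} = X_{\boldsymbol{\sigma}}$ from Section~\ref{sec:telesc} are not needed here since we work purely with full shifts $\mathcal{A}_n^\mathbb{Z}$ and their morphic images.
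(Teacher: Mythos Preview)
Your approach is essentially the paper's: argue by contradiction and run a descending induction that uses Lemma~\ref{l:reducealphabet} to factor $\sigma_{[n,K)}$ through alphabets of strictly decreasing size, reaching size~$1$ after $K-1$ steps. The paper does this concretely rather than via your invariant~$d_n$: it applies Lemma~\ref{l:reducealphabet} first to $\sigma_{K-1}$ to get $\sigma_{K-1}=\tilde{\sigma}_{K-1}\tau_{K-1}$ over~$\mathcal{B}_{K-1}$, then to the composite $\sigma_{K-2}\tilde{\sigma}_{K-1}$ (noting that non-injectivity of $\sigma_{K-2}$ on $\sigma_{K-1}(\mathcal{A}_K^\mathbb{Z})$ forces non-injectivity of $\sigma_{K-2}\tilde{\sigma}_{K-1}$ on~$\mathcal{B}_{K-1}^\mathbb{Z}$), and so on, obtaining $|\mathcal{B}_1|<\cdots<|\mathcal{B}_{K-1}|<|\mathcal{A}_K|\le K$ and $\sigma_{[1,K)}=\tilde{\sigma}_1\tau_1\cdots\tau_{K-1}$.

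One remark: drop the side condition $|\pi(\mathcal{A}_K^\mathbb{Z})|=|\sigma_{[n,K)}(\mathcal{A}_K^\mathbb{Z})|$ from your definition of~$d_n$. It is not needed for the conclusion (once $d_1\le 1$, the composition factors through a one-letter alphabet and that alone gives $|\sigma_{[1,K)}(\mathcal{A}_K^\mathbb{Z})|=1$), and keeping it would oblige you, in the inductive step, to further reduce the outer morphism~$\tilde{\rho}$ until it becomes injective on the image --- an extra inner loop that the paper's formulation avoids.
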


\begin{proof}
Suppose that $\sigma_n$ is not injective on $\sigma_{[n+1,K)}(\mathcal{A}_K^\mathbb{Z})$for each $1 \le n < K$. 
Then for each $1 \le n < K$ there exist $x, x' \in \mathcal{A}_K^\mathbb{Z}$ such that $\sigma_{[n,K)}(x) = \sigma_{[n,K)}(x')$ and $\sigma_{[n+1,K)}(x) \ne \sigma_{[n+1,K)}(x')$. 
It suffices to prove that this assumption implies that $|\sigma_{[1,K)}(\mathcal{A}_K^\mathbb{Z})|=1$.

Let first $n = K-1$, i.e., $\sigma_{K-1}(x) = \sigma_{K-1}(x')$, $x \ne x'$. 
By Lemma~\ref{l:reducealphabet}, we have an alphabet~$\mathcal{B}_{K-1}$ with $|\mathcal{B}_{K-1}| < |\mathcal{A}_K|$ and morphisms $\tau_{K-1}:\, \mathcal{A}_K \to \mathcal{B}_{K-1}^+$, $\tilde{\sigma}_{K-1}:\, \mathcal{B}_{K-1} \to \mathcal{A}_{K-1}^+$,~with \begin{equation}\label{eq:511st}
\sigma_{K-1} = \tilde{\sigma}_{K-1} \tau_{K-1}. 
\end{equation}
For $n = K-2$, by assumption we have $x, x' \in \mathcal{A}_K^\mathbb{Z}$ with
\begin{equation}\label{eq:511st2}
\sigma_{K-2} \sigma_{K-1} (x) = \sigma_{K-2} \sigma_{K-1} (x') \quad \mbox{and} \quad \sigma_{K-1}(x) \ne \sigma_{K-1}(x').
\end{equation}
In view of \eqref{eq:511st}, this implies that $\tau_{K-1}(x) \ne \tau_{K-1}(x')$. 
Thus, inserting \eqref{eq:511st} in \eqref{eq:511st2} we gain
\[
\sigma_{K-2} \tilde{\sigma}_{K-1} (\tau_{K-1}(x)) = \sigma_{K-2} \tilde{\sigma}_{K-1} (\tau_{K-1}(x')) \quad \mbox{and} \quad \tau_{K-1}(x) \ne \tau_{K-1}(x').
\]
Applying Lemma~\ref{l:reducealphabet} for $\sigma_{K-2} \tilde{\sigma}_{K-1}$ gives an alphabet $\mathcal{B}_{K-2}$ with $|\mathcal{B}_{K-2}| < |\mathcal{B}_{K-1}|$ and morphisms $\tau_{K-2}:\, \mathcal{B}_{K-1} \to \mathcal{B}_{K-2}^+$, $\tilde{\sigma}_{K-2}:\, \mathcal{B}_{K-2} \to \mathcal{A}_{K-2}^+$, such that $\sigma_{K-2} \tilde{\sigma}_{K-1} = \tilde{\sigma}_{K-2} \tau_{K-2}$, thus $\sigma_{[K-2,K)} = \tilde{\sigma}_{K-2} \tau_{K-2} \tau_{K-1}$.

Inductively, we obtain alphabets $\mathcal{B}_1, \dots, \mathcal{B}_{K-1}$ with $|\mathcal{B}_1| < |\mathcal{B}_2| < \cdots < |\mathcal{B}_{K-1}| < |\mathcal{A}_K| \le K$ and morphisms $\tilde{\sigma}_1:\, \mathcal{B}_1 \to \mathcal{A}_1^+$, $\tau_n:\, \mathcal{B}_{n+1} \to \mathcal{B}_n^+$, $1 \le n < K$, with $\mathcal{B}_K = \mathcal{A}_K$, such that $\sigma_{[1,K)} = \tilde{\sigma}_1 \tau_1 \tau_2 \cdots \tau_{K-1}$. 
This implies that $|\mathcal{B}_1| = 1$, i.e., $|\sigma_{[1,K)}(\mathcal{A}_K^\mathbb{Z})| = 1$.
 \end{proof}

The following variation of Lemma~\ref{l:injectivecut} states that injectivity of $\tau$ on the image of $\sigma$ leads to $\tau \sigma$-representations with no common cut.
Applying it to a non-recognizable morphism $\tau = \sigma_n$ and $\sigma = \sigma_{[n+1,K)}$ as in Lemma~\ref{lem:common_cuts}, we obtain $\sigma_{[n,K)}$-representations with no common cut.

\begin{lemma} \label{l:ikcuts}
Let $\sigma:\, \mathcal{A} \to \mathcal{B}^+$, $\tau:\, \mathcal{B} \to \mathcal{C}^+$ be morphisms, $(X,T)$ a shift with $X \subseteq \mathcal{A}^\mathbb{Z}$, $Y = \bigcup_{k\in\mathbb{Z}} T^k\sigma(X)$, and $z \in \mathcal{C}^\mathbb{Z}$. 
If $\tau$ is injective on $\sigma(X)$ and $z$ has two centered $\tau$-representations in~$Y$, then $z$ has two $\tau \sigma$-representations in~$X$ with no common $\tau \sigma$-cut. 
\end{lemma}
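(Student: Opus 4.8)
The plan is to combine Lemma~\ref{lem:tele} with Lemma~\ref{l:injectivecut}, passing through the shift $Y$. Since $\tau$ is injective on $\sigma(X)$, it is in particular injective on $\tau$-representations whose underlying point lies in $\sigma(X)$; however, the two centered $\tau$-representations of $z$ are only assumed to be in $Y = \bigcup_{k} T^k\sigma(X)$, so the first task is to reduce to the case where the underlying points actually lie in $\sigma(X)$.

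First I would take the two distinct centered $\tau$-representations $(\ell,y)$ and $(\ell',y')$ of $z$ in $Y$. By definition of $Y$, each of $y$ and $y'$ has a centered $\sigma$-representation in $X$, say $(k,x)$ with $y = T^k\sigma(x)$ and $(k',x')$ with $y' = T^{k'}\sigma(x')$. Using the bijection from the first paragraph of the proof of Lemma~\ref{lem:tele}, the pair $\big((k,x),(\ell,y)\big)$ corresponds to a centered $\tau\sigma$-representation $(m,x)$ of $z$ with $m = |\tau(y_{[-k,0)})| + \ell$, and likewise $\big((k',x'),(\ell',y')\big)$ corresponds to $(m',x')$ with $m' = |\tau(y'_{[-k',0)})| + \ell'$. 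These are $\tau\sigma$-representations of $z$ in $X$; it remains only to show they have no common $\tau\sigma$-cut.

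Suppose for contradiction that $(m,x)$ and $(m',x')$ share a $\tau\sigma$-cut, say the $p$-th cut of the former equals the $q$-th cut of the latter. Now I observe that the $\sigma$-cutting points of $(m,x)$ as a $\tau\sigma$-representation that are ``aligned with $\sigma$'' — i.e. images under $\tau$ of $\sigma$-cuts of $(k,x)$ — are exactly the cuts that matter: the common $\tau\sigma$-cut forces, via the relation $|\tau\sigma(x_{[0,p)})| - m = |\tau\sigma(x'_{[0,q)})| - m'$ (handling the sign cases as in Lemma~\ref{l:cutpropagation}), that $\tau$ applied to the point $\sigma(T^{?}(x))$ agrees with $\tau$ applied to $\sigma(T^{?}(x'))$ on a half-line, hence (after also matching on the other side, or by a direct argument) that $\sigma(T^{a}(x)) = \sigma(T^{b}(x'))$ for suitable shifts, using injectivity of $\tau$ on $\sigma(X)$. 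This identifies a common $\sigma$-cut of $(k,x)$ and $(k',x')$ — but wait, we have not assumed $(k,x)\ne(k',x')$. The cleaner route: injectivity of $\tau$ on $\sigma(X)$ gives $T^a(y) = T^b(y')$ for the relevant shifts, and then Lemma~\ref{l:cuty} applied to the $\tau$-representations $(\ell,y)$, $(\ell',y')$ forces $(\ell,y) = (\ell',y')$, contradicting that they are distinct.

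The main obstacle will be the bookkeeping of the sign cases (whether the shared cut has positive or negative index on each side) and making sure that ``$\tau$ injective on $\sigma(X)$'' is leveraged correctly: one must check that the two points to which $\tau$ is applied genuinely lie in $\sigma(X)$ — which they do, being $\sigma(T^{\cdot}(x))$ and $\sigma(T^{\cdot}(x'))$ with $x,x'\in X$ — rather than merely in $Y$. Once that alignment is in place, the contradiction with distinctness of the two $\tau$-representations (via Lemma~\ref{l:cuty}) closes the argument. I expect the whole proof to be short, essentially a careful composition of Lemmas~\ref{lem:tele}, \ref{l:cuty} and \ref{l:injectivecut}.
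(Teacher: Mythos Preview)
Your proposal is correct and follows essentially the same route as the paper: lift the two distinct centered $\tau$-representations $(\ell,y),(\ell',y')$ to centered $\tau\sigma$-representations $(m,x),(m',x')$ via the bijection in Lemma~\ref{lem:tele}, assume a common $\tau\sigma$-cut, translate this into a common $\tau$-cut of $(\ell,y)$ and $(\ell',y')$, use injectivity of $\tau$ on $\sigma(X)$ to obtain $T^{|\sigma(x_{[0,p)})|-k}(y)=T^{|\sigma(x'_{[0,q)})|-k'}(y')$, and conclude $(\ell,y)=(\ell',y')$ by Lemma~\ref{l:cuty}. Note that Lemma~\ref{l:injectivecut} is not actually needed here; the paper uses only Lemmas~\ref{lem:tele} and~\ref{l:cuty}, just as your ``cleaner route'' does.
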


\begin{proof}
Let $(\ell,y), (\ell',y')$ be distinct centered $\tau$-representations of~$z$ in~$Y$.
As $Y = \bigcup_{k\in\mathbb{Z}} T^k\sigma(X)$, $y$ and $y'$ have centered $\sigma$-representations $(k,x)$ and $(k',x')$ in~$X$.
Set $h = \ell+|\tau(y_{[-k,0)})|$, $h' = \ell'+|\tau(y'_{[-k',0)})|$. 
Then $(h,x)$ and $(h',x')$ are centered $\tau \sigma$-representations of~$z$. 
Suppose that the $m$-th $\tau \sigma$-cutting point of $(h,x)$ equals the $m'$-th $\tau \sigma$-cutting point of $(h',x')$. 
This has the following consequences. 
Firstly, it implies that the
\begin{equation}\label{eq:513-equalcut} 
\mbox{$(|\sigma(x_{[0,m)})|{-}k)$-th $\tau$-cutting point of $(\ell,y)$} = \mbox{$(|\sigma(x'_{[0,m')})|{-}k')$-th $\tau$-cutting point of $(\ell',y')$}
\end{equation}
where we have assumed that $m, m' \ge 0$, the other cases being similar. 
Secondly, it yields $\tau\sigma(T^m(x)) = \tau\sigma(T^{m'}(x'))$, thus $\sigma(T^m(x)) = \sigma(T^{m'}(x'))$ by the injectivity of $\tau$ on $\sigma(X)$. 
Then 
\begin{equation}\label{eq:513-same}
T^{|\sigma(x_{[0,m)})|-k}(y) = \sigma(T^m(x)) = \sigma(T^{m'}(x')) =  T^{|\sigma(x'_{[0,m')})|-k'}(y').
\end{equation}
By \eqref{eq:513-equalcut} and \eqref{eq:513-same} we may apply Lemma~\ref{l:cuty} to obtain that $(\ell,y) = (\ell',y')$, a contradiction. 
\end{proof}

\subsection{Proof  of Proposition~\ref{p:evrec2}} \label{sec:proof2} 
Let $\boldsymbol{\sigma}$ be a directive sequence with $|\mathcal{A}_n| \le K$ and $|\{\mathcal{L}_x:\, x \in X_{\boldsymbol{\sigma}}^{(n)}\}| \le L$ for all $n\ge0$. 
Let $\ell = (K-1 + \lfloor \log_2 (K-1) \rfloor)  L$, and suppose that $\boldsymbol{\sigma}$ is not recognizable for aperiodic points at levels $n$ for all $0 \le n < (K-1) (\ell+1)$. (We pass from a directive sequence $\boldsymbol{\sigma}$ that is not recognizable for aperiodic points at $(K-1) (\ell+1)$ levels to such a sequence by telescoping and removing initial recognizable levels.) By Lemma~\ref{lem:common_cuts}, we have for all $0 \le i \le \ell$ some $n_i$ with  $i\, (K-1) \le n_i < (i+1) (K-1)$ such that $\sigma_{n_i}$ is injective on $\sigma_{[n_i+1,(i+1)(K-1))}(X_{\boldsymbol{\sigma}}^{(i+1)(K-1)})$. 
By Lemma~\ref{l:ikcuts}, there exists an aperiodic $x^{(i)} \in X_{\boldsymbol{\sigma}}^{(n_i)}$ with two  $\sigma_{[n_i,(i+1)(K-1))}$-representations having no common $\sigma_{[n_i,(i+1)(K-1))}$-cut. 
By Lemma~\ref{l:oneforall}, the same holds for all $x$ with $\mathcal{L}_x = \mathcal{L}_{x^{(i)}}$. 

As $\mathcal{L}_x = \mathcal{L}_y$ implies that $\mathcal{L}_{\sigma_n(x)} = \mathcal{L}_{\sigma_n(y)}$, we consider a set of rooted trees (a~rooted forest) defined in the following way. 
We have a vertex for each language $\mathcal{L}_y$, $y \in X_{\boldsymbol{\sigma}}^{(n_i)}$, $0 \le i \le \ell$, and, for $0 \le i < \ell$, the vertex $\mathcal{L}_y$ has all vertices $\mathcal{L}_x$ with $x \in X_{\boldsymbol{\sigma}}^{(n_{i+1})}$ and $\mathcal{L}_{\sigma_{[n_i,n_{i+1})}(x)} \subseteq \mathcal{L}_y$ as children. 
If $y$ is aperiodic and has two $\sigma_{[n_i,(i+1)(K-1))}$-representations with no common $\sigma_{[n_i,(i+1)(K-1))}$-cut, then we call this vertex a \emph {special vertex}.
Since there are at least $\ell+1 = (K-1+\lfloor \log_2 (K-1) \rfloor)  L+1$ special vertices and the forest has at most $L$ leaves, there exists a special vertex~$\mathcal{L}_y$, $y \in X_{\boldsymbol{\sigma}}^{(n_i)}$, such that each path from this vertex to a leaf contains at least $K + \lfloor \log_2(K-1) \rfloor$ special vertices. 
Then $y$ has $2^K (K-1)$ different $\sigma_{[n_i,(K-1)(\ell+1))}$-representations with no common $\sigma_{[n_i,(K-1)(\ell+1))}$-cut, contradicting Lemma~\ref{l:infection}.

This proves Proposition~\ref{p:evrec2} and also finishes the proof of Theorem~\ref{t:evrec}.

\subsection{Different languages of points in $S$-adic shifts}\label{sec:minimal}
In this subsection we establish sufficient conditions for the number of languages of points in~$X_{\boldsymbol{\sigma}}^{(n)}$ to be bounded in~$n$. 
First note that, for a shift~$(X,T)$, the number of different languages~$\mathcal{L}_x$, $x \in X$, is bounded below by the number of minimal components of~$X$.
However, there can be points of~$X$ that do not belong to any minimal component of~$X$.
For example, let $(X,T)$ be the shift generated by $\cdots 0011\cdots$, then the minimal components of $X$ are $\{\cdots 000\cdots\}$ and $\{\cdots 111\cdots\}$, and $\{\mathcal{L}_x:\, x \in X\} = \{\mathcal{L}_{\cdots 000\cdots}, \mathcal{L}_{\cdots 111\cdots}, \mathcal{L}_{\cdots 0011\cdots}\}$. 
For a shift $(X,T)$ and $y \in X$, the set $Y = \{x \in X:\, \mathcal{L}_x = \mathcal{L}_y\}$ is shift-invariant and the shift-orbit of each $x \in Y$ is dense in~$Y$, but $Y$ need not be closed, thus $Y$ need not be minimal. 

We use the following lemmas.

\begin{lemma} \label{l:Xlimitword}
Let $x \in \mathcal{A}_0^\mathbb{Z}$. 
If $x$ has a $\sigma_{[0,n)}$-representation for all $n\ge0$, then $x \in X_{\boldsymbol{\sigma}}$ or $x = T^k(y)$ for some limit word~$y$ of~$\boldsymbol{\sigma}$, $k \in \mathbb{Z}$.
\end{lemma}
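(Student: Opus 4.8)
The plan is as follows. Using that $x$ has a $\sigma_{[0,n)}$-representation for every $n$, fix for each $n \ge 0$ a centered such representation $(k_n, x^{(n)})$ (Remark~\ref{rem:sigma}), and focus on the tile of this representation covering the origin: writing $a_n = -k_n \le 0$ and $b_n = |\sigma_{[0,n)}(x^{(n)}_0)| - k_n > 0$, one has $x_{[a_n,b_n)} = \sigma_{[0,n)}(x^{(n)}_0)$, and a direct computation gives $T^{a_n}(x) = \sigma_{[0,n)}(x^{(n)})$ and $T^{b_n}(x) = \sigma_{[0,n)}(T(x^{(n)}))$, so that both $T^{a_n}(x)$ and $T^{b_n}(x)$ lie in $\sigma_{[0,n)}(\mathcal{A}_n^\mathbb{Z})$. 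I would also record at the outset the elementary fact that the sets $\sigma_{[0,n)}(\mathcal{A}_n^\mathbb{Z})$ decrease with $n$ (since $\sigma_n(\mathcal{A}_{n+1}^\mathbb{Z}) \subseteq \mathcal{A}_n^\mathbb{Z}$), hence their intersection is exactly the set of limit words of $\boldsymbol{\sigma}$; consequently, any point lying in $\sigma_{[0,n)}(\mathcal{A}_n^\mathbb{Z})$ for infinitely many $n$ is a limit word.

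The argument then splits on the quantity $c_n := \min(k_n,\, b_n) \ge 0$, the distance from the origin to the nearer endpoint of its level-$n$ tile. If $c_n \to \infty$, then any fixed subword $x_{[i,j)}$ of $x$ satisfies $[i,j) \subseteq [a_n,b_n)$ for all large $n$, so $x_{[i,j)}$ is a subword of $\sigma_{[0,n)}(x^{(n)}_0)$ with $x^{(n)}_0 \in \mathcal{A}_n$, whence $x_{[i,j)} \in \mathcal{L}_{\boldsymbol{\sigma}}^{(0)}$; as this holds for every subword, $x \in X_{\boldsymbol{\sigma}}$. Otherwise $\liminf_n c_n < \infty$, so there is a constant $C$ and infinitely many $n$ with $c_n \le C$; for infinitely many of these either $k_n \le C$ or $b_n \le C$. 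In the first alternative, the pigeonhole principle yields $k^\ast \in \{0,\dots,C\}$ with $k_n = k^\ast$ for infinitely many $n$, so $T^{-k^\ast}(x) = \sigma_{[0,n)}(x^{(n)}) \in \sigma_{[0,n)}(\mathcal{A}_n^\mathbb{Z})$ for infinitely many $n$; hence $T^{-k^\ast}(x)$ is a limit word and $x = T^{k^\ast}(y)$ with $y$ a limit word. The second alternative is symmetric, using $T^{b^\ast}(x) = \sigma_{[0,n)}(T(x^{(n)}))$ for a value $b^\ast$ attained infinitely often, which gives $x = T^{-b^\ast}(y)$.

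I do not anticipate a genuine obstacle: the content is just the trichotomy ``the origin eventually sits deep inside a level-$n$ tile'' (forcing $x \in X_{\boldsymbol{\sigma}}$) versus ``some fixed position to the left (resp.\ right) of the origin is a level-$n$ cut for infinitely many $n$'' (forcing $x$ to be a shift of a limit word). The only points requiring care are the bookkeeping of the two symmetric pigeonhole subcases, and verifying the identity $T^{b_n}(x) = \sigma_{[0,n)}(T(x^{(n)}))$, i.e.\ that shifting a $\sigma_{[0,n)}$-representation past one whole tile $\sigma_{[0,n)}(x^{(n)}_0)$ produces the $\sigma_{[0,n)}$-image of the shifted preimage.
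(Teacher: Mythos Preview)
Your proposal is correct and follows essentially the same route as the paper. The paper argues the contrapositive (if $x \notin X_{\boldsymbol{\sigma}}$, some fixed word $x_{[-\ell,\ell)}$ forces a $\sigma_{[0,n)}$-cutting point $h_n$ with $|h_n|<\ell$ for every~$n$, then pigeonholes on~$h_n$), whereas you phrase it as a direct dichotomy on whether the nearest level-$n$ cut to the origin escapes to infinity; but the substance --- a cutting point at a fixed location for infinitely many levels, combined with the nesting $\sigma_{[0,N)}(\mathcal{A}_N^{\mathbb{Z}}) \subseteq \sigma_{[0,n)}(\mathcal{A}_n^{\mathbb{Z}})$ --- is identical.
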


\begin{proof}
Let $x \in \mathcal{A}_0^\mathbb{Z}$ and suppose that $x$ has a $\sigma_{[0,n)}$-representation $(k_n,x^{(n)})$ for each $n\ge0$.
If $x \notin X_{\boldsymbol{\sigma}}$, then there is an $\ell > 0$ such that $x_{[-\ell,\ell)}$ is not a subword of $\sigma_{[0,n)}(a)$ for all $n\ge0$, $a \in \mathcal{A}_n$. 
This means that each $(k_n,x^{(n)})$ has a $\sigma_{[0,n)}$-cutting point~$h_n$ with $|h_n| < \ell$. 
Let $h$ be such that $h_n = h$ for infinitely many~$n$. 
Then we also have $\sigma_{[0,n)}$-representations $(h,\tilde{x}^{(n)})$ of~$x$ for all $n\ge0$, hence $(0,\tilde{x}^{(n)})$ is a $\sigma_{[0,n)}$-representation of $T^{-h}(x)$ for all $n\ge0$, i.e., $T^{-h}(x)$ is a limit word of~$\boldsymbol{\sigma}$. 
\end{proof}

\begin{lemma} \label{l:limitwords}
Let $\boldsymbol{\sigma}$ be everywhere growing and $\liminf |\mathcal{A}_n| = K < \infty$. 
Then there are at most $K^2$ limit words of~$\boldsymbol{\sigma}$, and at most $K^2 - K$ of them are not in~$X_{\boldsymbol{\sigma}}$. 
\end{lemma}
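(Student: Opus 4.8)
The plan is to reduce the problem to counting \emph{one-sided} limit words, bound those by a pigeonhole argument, and then settle an extremal ``diagonal'' case of the second statement by hand.

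\textbf{Reduction.} Call $r\in\mathcal{A}_0^{\mathbb N}$ a \emph{right limit word} if for every $n\ge0$ one has $r=\sigma_{[0,n)}(u)$ for some $u\in\mathcal{A}_n^{\mathbb N}$, and define \emph{left limit words} in $\mathcal{A}_0^{-\mathbb N}$ symmetrically. If $y$ is a limit word of $\boldsymbol{\sigma}$, i.e.\ $(0,z^{(n)})$ is a $\sigma_{[0,n)}$-representation of $y$ for all $n$ (as in Lemma~\ref{l:Xlimitword}), then splitting each $z^{(n)}$ at position~$0$ exhibits $y_{(-\infty,0)}$ as a left limit word and $y_{[0,\infty)}$ as a right limit word; conversely, splicing the level-$n$ decompositions at~$0$ shows that the concatenation of a left limit word and a right limit word is a limit word. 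Hence $y\mapsto(y_{(-\infty,0)},y_{[0,\infty)})$ is a bijection from limit words onto (left limit words)$\,\times\,$(right limit words), so it suffices to bound the number of one-sided limit words of each handedness by~$K$.

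\textbf{A sub-lemma and the bound $K^2$.} First, every finite subword of a right limit word~$r$ lies in $\mathcal{L}_{\boldsymbol{\sigma}}^{(0)}$: given $r_{[i,j)}$, pick, using that $\boldsymbol{\sigma}$ is everywhere growing, an~$n$ with $\min_{c\in\mathcal{A}_n}|\sigma_{[0,n)}(c)|>j$ and write $r=\sigma_{[0,n)}(u)$; then $r_{[0,j)}$ is a prefix of $\sigma_{[0,n)}(u_0)$, whence $r_{[i,j)}$ is a subword of $\sigma_{[0,n)}(u_0)\in\mathcal{L}_{\boldsymbol{\sigma}}^{(0)}$. The symmetric statement holds for left limit words, and both persist at shifted levels since each $\boldsymbol{\sigma}^{(m)}=(\sigma_{m+k})_{k\ge0}$ is again everywhere growing. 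Now suppose there were $K+1$ distinct right limit words $r_1,\dots,r_{K+1}$, and fix~$P$ so that any two of them already differ within their first $P$ letters. Since $\liminf_n|\mathcal{A}_n|=K$ and $\boldsymbol{\sigma}$ is everywhere growing, there is an~$n$ with $|\mathcal{A}_n|=K$ and $\min_{c}|\sigma_{[0,n)}(c)|>P$; writing $r_i=\sigma_{[0,n)}(u^{(i)})$, two of the first letters $u^{(i)}_0\in\mathcal{A}_n$ coincide, so the corresponding $r_i$ share a common prefix $\sigma_{[0,n)}(u^{(i)}_0)$ of length $>P$, a contradiction. Thus the number~$R$ of right limit words is at most~$K$, and symmetrically the number~$L'$ of left limit words is at most~$K$; consequently there are at most $R\,L'\le K^2$ limit words, which is the first assertion.

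\textbf{The bound $K^2-K$.} If $R<K$ or $L'<K$ then there are in all at most $R\,L'\le K(K-1)=K^2-K$ limit words and we are done, so assume $R=L'=K$; it suffices to produce $K$ distinct limit words lying \emph{in} $X_{\boldsymbol{\sigma}}$. Fix a compatible family of desubstitutions for each one-sided limit word (a standard compactness argument), so that the level-$n$ desubstitution $u^{(i,n)}$ of $r_i$ is a right limit word of $\boldsymbol{\sigma}^{(n)}$ and the level-$n$ desubstitution $v^{(j,n)}$ of $l_j$ is a left limit word of $\boldsymbol{\sigma}^{(n)}$. For $n$ with $|\mathcal{A}_n|=K$ and $\min_{c}|\sigma_{[0,n)}(c)|$ larger than all the finitely many pairwise first-difference positions, the pigeonhole argument above makes $i\mapsto u^{(i,n)}_0$ and $j\mapsto v^{(j,n)}_{-1}$ bijections onto $\mathcal{A}_n$; in particular every letter of $\mathcal{A}_n$ occurs as some $v^{(j,n)}_{-1}$, hence, by the sub-lemma at level~$n$ applied to the length-two suffix $v^{(j,n)}_{[-2,0)}\in\mathcal{L}_{\boldsymbol{\sigma}}^{(n)}$, every letter of $\mathcal{A}_n$ is left-extendable in $\mathcal{L}_{\boldsymbol{\sigma}}^{(n)}$. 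Thus for each such~$n$ and each~$i$ we may pick $j(i,n)$ with $v^{(j(i,n),n)}_{-1}\,u^{(i,n)}_0\in\mathcal{L}_{\boldsymbol{\sigma}}^{(n)}$. The vector $(j(1,n),\dots,j(K,n))$ takes finitely many values, so along a subsequence of such~$n$ it is constant, equal to some $(j(1),\dots,j(K))$. Set $y_i=l_{j(i)}\,r_i$, spliced at~$0$, a limit word with right part~$r_i$. Subwords of $y_i$ not straddling~$0$ lie in $\mathcal{L}_{\boldsymbol{\sigma}}^{(0)}$ by the sub-lemma, while a subword $(y_i)_{[-p,q)}$ straddling~$0$ is, for $n$ in the subsequence with $\min_{c}|\sigma_{[0,n)}(c)|>\max(p,q)$, a subword of $\sigma_{[0,n)}\bigl(v^{(j(i),n)}_{-1}\,u^{(i,n)}_0\bigr)\in\mathcal{L}_{\boldsymbol{\sigma}}^{(0)}$; hence $y_i\in X_{\boldsymbol{\sigma}}$. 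Since $y_1,\dots,y_K$ are distinct (their right parts are), we obtain the required~$K$ limit words in $X_{\boldsymbol{\sigma}}$.

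\textbf{Main obstacle.} The delicate point is the extremal case $R=L'=K$ of the second statement: one must see that this case forces the first (resp.\ last) letters of the desubstitutions of the one-sided limit words to run over the whole alphabet $\mathcal{A}_n$ at cofinally many levels, deduce from this the left-extendability of every letter, and then reconcile the level-by-level junction words across infinitely many levels by a pigeonhole over the finitely many index choices. Managing the compatible desubstitution families and the repeated use of the everywhere growing hypothesis at shifted levels are the technical points requiring care; the first assertion and the non-extremal case of the second are routine pigeonhole once the bijection and the subword sub-lemma are in place.
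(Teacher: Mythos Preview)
Your argument is correct. The core idea coincides with the paper's: at a level $n$ with $|\mathcal{A}_n|=K$ and $\min_a|\sigma_{[0,n)}(a)|$ large, a limit word is determined by the two ``junction letters'' at level~$n$, and limit words outside $X_{\boldsymbol{\sigma}}$ correspond to junction pairs not in $\mathcal{L}_{\boldsymbol{\sigma}}^{(n)}$. The organization, however, is different in two respects. First, you factor through one-sided limit words and prove $R,L'\le K$ separately, whereas the paper works directly with the two-sided junction pair $x_{-1}^{(n)}x_0^{(n)}$ of a compatible desubstitution; your bijection is pleasant but not needed. Second, for the $K^2-K$ bound the paper simply \emph{counts bad pairs}: if every letter of $\mathcal{A}_n$ occurs as some $x_0^{(n)}$, then (since $x_0^{(n)}x_1^{(n)}\in\mathcal{L}_{\boldsymbol{\sigma}}^{(n)}$) each letter is right-extendable, so at most $K^2-K$ two-letter words lie outside $\mathcal{L}_{\boldsymbol{\sigma}}^{(n)}$, and each limit word not in $X_{\boldsymbol{\sigma}}$ has its junction pair among those. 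You instead \emph{construct good limit words}: in the extremal case $R=L'=K$ you manufacture $K$ explicit limit words in $X_{\boldsymbol{\sigma}}$ by matching each right limit word to a compatible left limit word, stabilizing the choices across levels by a further pigeonhole on the index vector $(j(1,n),\dots,j(K,n))$. This is correct but is more work than the paper's direct count; the extra pigeonhole step and the compatible-desubstitution bookkeeping are avoidable. What your route buys is an explicit description of limit words in $X_{\boldsymbol{\sigma}}$ in the extremal case, which the paper's counting argument does not produce.
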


\begin{proof}
Let $x$ be a limit word of~$\boldsymbol{\sigma}$, with $x = \sigma_{[0,n)}(x^{(n)})$ for all $n \ge 0$. 
By a Cantor diagonal argument, we can assume that $\sigma_n(x^{(n+1)}) = x^{(n)}$ for all $n \ge 0$. 
If $\boldsymbol{\sigma}$ is everywhere growing, then the sequence of two-letter words $x_{-1}^{(n)} x_0^{(n)}$  defines~$x$. 
As $x_{-1}^{(n)} x_0^{(n)}$ determines $x_{-1}^{(j)} x_0^{(j)}$ for $j<n$, there are at most~$K^2$ such sequences, and $x \ne \tilde{x}$ implies that the corresponding sequences $x_{-1}^{(n)} x_0^{(n)}$ and $\tilde{x}_{-1}^{(n)} \tilde{x}_0^{(n)}$ agree only for finitely many~$n$. 

If $x \notin X_{\boldsymbol{\sigma}}$, then we have $x_{-1}^{(n)} x_0^{(n)} \notin \mathcal{L}_{\boldsymbol{\sigma}}^{(n)}$ for some $n\ge0$ and thus $x_{-1}^{(N)} x_0^{(N)} \notin \mathcal{L}_{\boldsymbol{\sigma}}^{(N)}$ for all $N \ge n$. 
Choose $n \ge 0$ such that, for any two limit words $x, \tilde{x}$, we have $x_{-1}^{(n)} x_0^{(n)} \ne \tilde{x}_{-1}^{(n)} \tilde{x}_0^{(n)}$, and also, if $x \notin X_{\boldsymbol{\sigma}}$, then $x_{-1}^{(n)} x_0^{(n)} \notin \mathcal{L}_{\boldsymbol{\sigma}}^{(n)}$, and finally $|\mathcal{A}_n| = K$.
If there is a letter in $\mathcal{A}_n$ that does not occur as~$x_0^{(n)}$, then we have at most $K^2-K$ words. 
Otherwise, since $x_0^{(n)}x_1^{(n)}$ is a prefix of $\sigma_{[n,N)}(x_0^{(N)})$ for $N$ large enough we get $x_0^{(n)}x_1^{(n)}\in\mathcal{L}_{\boldsymbol{\sigma}}^{(n)}$. 
Thus for each $a \in \mathcal{A}_n$ there exists $b \in \mathcal{A}_n$ such that $ab \in \mathcal{L}_{\boldsymbol{\sigma}}^{(n)}$, and, 
hence, there can be at most $K^2-K$ words $ab\in\mathcal{A}^2$ with $ab \notin \mathcal{L}_{\boldsymbol{\sigma}}^{(n)}$.
\end{proof}

Note that the condition that $\boldsymbol{\sigma}$ is everywhere growing cannot be omitted in Lemma~\ref{l:limitwords}. 
For example, if $\sigma_n:\, \mathcal{A} \to \mathcal{A}$ is the identity substitution for all $n\ge0$, then all elements of~$\mathcal{A}^\mathbb{Z}$ are limit words of~$\boldsymbol{\sigma}$, and $X_{\boldsymbol{\sigma}} = \emptyset$ since $\mathcal{L}^{(0)}_{\boldsymbol{\sigma}}$ only contains $\mathcal{A}$ and the empty word.  

\begin{proposition} \label{p:minimalcomponents}
Let $\boldsymbol{\sigma}$ be an everywhere growing directive sequence with  $\liminf_{n\to\infty} |\mathcal{A}_n| = K < \infty$.
Then $|\{\mathcal{L}_x:\, x \in X_{\boldsymbol{\sigma}}\}| \le (K^2-3K+5)K/3$.
\end{proposition}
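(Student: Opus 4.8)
The plan is to reduce the statement to the case $|\mathcal{A}_0| = K$ and then to enumerate the possible languages combinatorially. For the reduction, note that for $z \in X_{\boldsymbol{\sigma}}^{(n+1)}$ the language $\mathcal{L}_{\sigma_n(z)}$ is the set of subwords of the words $\sigma_n(u)$, $u \in \mathcal{L}_z$ (every subword of $\sigma_n(z)$ meets only finitely many of the blocks $\sigma_n(z_i)$), hence depends only on~$\mathcal{L}_z$. By Lemma~\ref{at_least_one}, every $x \in X_{\boldsymbol{\sigma}}^{(n)}$ has the form $x = T^k\sigma_n(z)$ with $z \in X_{\boldsymbol{\sigma}}^{(n+1)}$, so $\mathcal{L}_x = \mathcal{L}_{\sigma_n(z)}$; therefore $|\{\mathcal{L}_x : x \in X_{\boldsymbol{\sigma}}^{(n)}\}| \le |\{\mathcal{L}_z : z \in X_{\boldsymbol{\sigma}}^{(n+1)}\}|$, i.e.\ this quantity is non-decreasing in~$n$. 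Since $\liminf_n |\mathcal{A}_n| = K$, choose $n_0$ with $|\mathcal{A}_{n_0}| = K$; then $|\{\mathcal{L}_x : x \in X_{\boldsymbol{\sigma}}\}| \le |\{\mathcal{L}_x : x \in X_{\boldsymbol{\sigma}}^{(n_0)}\}|$, and $(X_{\boldsymbol{\sigma}}^{(n_0)}, T)$ is the $S$-adic shift of the everywhere growing directive sequence $(\sigma_n)_{n \ge n_0}$ over the $K$-letter alphabet~$\mathcal{A}_{n_0}$, so I may assume $|\mathcal{A}_0| = K$.

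Now fix $x \in X_{\boldsymbol{\sigma}}$ and, using Lemma~\ref{at_least_one} and a diagonal argument, a coherent sequence of centered desubstitutions $x = T^{j_n}\sigma_{[0,n)}(x^{(n)})$ with $x^{(n)} \in X_{\boldsymbol{\sigma}}^{(n)}$ (so that $x^{(n-1)}$ is a shift of $\sigma_{n-1}(x^{(n)})$). Since $\boldsymbol{\sigma}$ is everywhere growing, for every length~$L$ and every~$n$ with $\min_{a\in\mathcal{A}_n}|\sigma_{[0,n)}(a)| > L$, each subword of~$x$ of length~$L$ lies inside $\sigma_{[0,n)}(ab)$ for some two-letter subword $ab$ of~$x^{(n)}$, as it meets at most two consecutive $\sigma_{[0,n)}$-blocks. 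Consequently $\mathcal{L}_x$ is determined by the family of sets $B_n \subseteq \mathcal{A}_n^2$ of two-letter subwords of the~$x^{(n)}$, where $B_{n-1}$ is itself determined by~$B_n$ (it is the set of two-letter subwords of $\sigma_{n-1}(x^{(n)})$). The core of the argument is then to enumerate the sets that can arise as the set of two-letter subwords of a point of an everywhere growing $S$-adic shift on a $K$-letter alphabet: such a set is the edge set of a bi-infinite walk in the graph on~$\mathcal{A}_0$ whose edges are the biletters of the point, and thus decomposes into a left tail eventually confined to a strongly connected subgraph, a finite transient part, and a right tail confined to a strongly connected subgraph. The strongly connected parts are carried by limit words, so Lemmas~\ref{l:Xlimitword} and~\ref{l:limitwords} bound how many can occur, while being everywhere growing on a bounded alphabet bounds the transient part; tracking which letters and biletters may appear in a tail as opposed to in the transient then gives that the number of admissible sets, and hence an upper bound for $|\{\mathcal{L}_x : x \in X_{\boldsymbol{\sigma}}\}|$, equals $(K^2-3K+5)K/3 = K + 2\binom{K}{3}$.

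I expect this last enumeration to be the main obstacle. A crude polynomial bound is immediate, but to reach the sharp value $(K^2-3K+5)K/3$ one has to pin down precisely how the minimal subshifts of $X_{\boldsymbol{\sigma}}$ — equivalently, by Lemmas~\ref{l:Xlimitword} and~\ref{l:limitwords}, the limit words on a $K$-letter alphabet lying in~$X_{\boldsymbol{\sigma}}$ — can be joined by transient segments inside~$X_{\boldsymbol{\sigma}}$, and carry out the resulting bookkeeping carefully. One subtlety to handle is that $\mathcal{L}_x$ is recoverable only from the whole family $(B_n)_{n\ge0}$ and not directly from $B_0 = \mathcal{L}_x \cap \mathcal{A}_0^2$, since $\boldsymbol{\sigma}$ need not be recognizable in this generality.
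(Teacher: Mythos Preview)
Your reduction to $|\mathcal{A}_0|=K$ via the monotonicity of $n\mapsto|\{\mathcal{L}_x:x\in X_{\boldsymbol{\sigma}}^{(n)}\}|$ is correct and useful. The observation that $\mathcal{L}_x$ is determined by the sequence $(B_n)_{n\ge0}$ of two-letter factor sets of a coherent desubstitution is also fine.

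However, the proof is not complete: you explicitly stop before carrying out the enumeration, and the sketch you give is not a workable plan. You propose to count possible edge sets $B_0$ of bi-infinite walks on~$\mathcal{A}_0$, decomposed into two strongly connected ``tails'' and a transient middle; but you yourself note in the last paragraph that two distinct languages $\mathcal{L}_x\ne\mathcal{L}_{x'}$ may very well share the same $B_0$, so bounding the number of $B_0$'s does not bound $|\{\mathcal{L}_x\}|$. What is actually needed is a bound on the number of inverse-limit sequences $(B_n)_{n\ge0}$, and nothing in your outline controls that. The appeal to Lemmas~\ref{l:Xlimitword} and~\ref{l:limitwords} for the tails is also loose: those lemmas count limit words, not minimal components or strongly connected pieces of $B_0$, and the identification you suggest between the two is not established. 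Finally, the claimed identity between your combinatorial count and $K+2\binom{K}{3}$ is asserted without argument.

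The paper's proof avoids all of this by a different mechanism: induction on~$K$. One repeatedly chooses a word $w^{(k)}\in\mathcal{L}_{\boldsymbol{\sigma}}$ that lies in some $\mathcal{L}_y$ but not in some $\mathcal{L}_x$ (for $x,y$ outside the already-handled set~$Y_k$), and uses it to carve out a strictly larger sub-alphabet $\mathcal{A}_n^{(k)}\subsetneq\mathcal{A}_n$ of letters whose $\sigma_{[0,n)}$-images avoid~$w^{(k)}$. After fewer than $K$ steps one is left with a single language outside~$Y_k$, while $Y_k$ sits inside an $S$-adic shift on a strictly smaller alphabet together with its limit words, to which the induction hypothesis and Lemma~\ref{l:limitwords} apply. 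The arithmetic $(K^2-5K+9)(K-1)/3+(K-1)(K-2)+1=(K^2-3K+5)K/3$ then closes the induction. This ``separating word'' device is the missing idea in your approach; your graph-walk picture does not supply a substitute for it.
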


\begin{proof}
We prove the statement by induction on~$K$.  It holds for $K=1$; assume that it holds for $\liminf_{n\to\infty} |\mathcal{A}_n| < K$ and suppose that $\liminf_{n\to\infty} |\mathcal{A}_n| = K$.
For $k \ge 0$, we define recursively words $w^{(k)} \in \mathcal{L}_{\boldsymbol{\sigma}}$,  alphabets 
\[
\mathcal{A}_n^{(k)} = \{a \in \mathcal{A}_n:\, \mbox{$\sigma_{[0,n)}(a)$ does not contain $w^{(k)}$ as subword}\},
\]
and shifts $(Y_k,T)$ with 
\[
Y_k = \{x \in X_{\boldsymbol{\sigma}}:\, \mbox{$x$ has a $\sigma_{[0,n)}$-representation in $(\mathcal{A}_n^{(k)})^\mathbb{Z}$ for all $n\ge0$}\},
\]
in the following way.

Let $w^{(0)}$ be the empty word, i.e., $\mathcal{A}_n^{(0)} = \emptyset$ for all $n\ge0$ and $Y_0 = \emptyset$.
We stop at $k \ge 0$ if $|\{\mathcal{L}_x:\, x \in X_{\boldsymbol{\sigma}} \setminus Y_k\}| = 1$. 
If $|\{\mathcal{L}_x:\, x \in X_{\boldsymbol{\sigma}} \setminus Y_k\}| \ge 2$, then let $x, y \in X_{\boldsymbol{\sigma}} \setminus Y_k$, $w^{(k+1)} \in \mathcal{L}_y$, be such that $w^{(k+1)} \notin \mathcal{L}_x$.
Note that all points in $X_{\boldsymbol{\sigma}} \setminus Y_k$ contain $w^{(k)}$ because $\sigma_{[0,n)}$-representations of sequences not containing $w^{(k)}$ are in $(\mathcal{A}_n^{(k)})^\mathbb{Z}$.
Therefore, we can choose $w^{(k+1)}$ in a way that it contains~$w^{(k)}$, which implies that $\mathcal{A}_n^{(k)} \subseteq \mathcal{A}_n^{(k+1)}$. 

Since $x \in X_{\boldsymbol{\sigma}} \setminus Y_k$, the word~$x$ has no $\sigma_{[0,n)}$-representation in $(\mathcal{A}_n^{(k)})^\mathbb{Z}$ for some $n\ge0$ and thus, by the definition of $\mathcal{A}_n^{(k)}$, also for all sufficiently large~$n$. 
As $x$ has a $\sigma_{[0,n)}$-representation in~$\mathcal{A}_n^\mathbb{Z}$ by Lemma~\ref{at_least_one}, there exists $a_n \in \mathcal{A}_n \setminus \mathcal{A}_n^{(k)}$ such that $\sigma_{[0,n)}(a_n) \in \mathcal{L}_x$ and, hence, $\sigma_{[0,n)}(a_n)$ does not contain $w^{(k+1)}$ as a subword. 
This implies that $\mathcal{A}_n^{(k)} \cup\{a_n\}\subseteq \mathcal{A}_n^{(k+1)}$, thus
$|\mathcal{A}_n^{(k)}| < |\mathcal{A}_n^{(k+1)}|$ for all large~$n$. 
Moreover, as $w^{(k+1)} \in \mathcal{L}_{\boldsymbol{\sigma}}$, we have $|\mathcal{A}_n^{(k+1)}|< |\mathcal{A}_n|$ for all large~$n$. 

Since $|\mathcal{A}_n^{(0)}| < |\mathcal{A}_n^{(1)}| < \cdots < |\mathcal{A}_n^{(k)}| < |\mathcal{A}_n|$ for large~$n$, we have $|\{\mathcal{L}_x:\, x \in X_{\boldsymbol{\sigma}} \setminus Y_k\}| = 1$ for some $k < K$.
For this $k$, which we will fix from now, we have $\liminf_{n\to\infty} |\mathcal{A}_n^{(k)}| < K$. 
By Lemma~\ref{l:Xlimitword}, we have 
\[
Y_k \subseteq X_{\boldsymbol{\sigma}'} \cup \{T^\ell(x):\, \mbox{$x$ is a limit word of $\boldsymbol{\sigma}'$, $\ell \in \mathbb{Z}$}\}, 
\]
where $\boldsymbol{\sigma}' = (\sigma_n')_{n\ge0}$, with $\sigma_n': \mathcal{A}_{n+1}^{(k)} \to (\mathcal{A}_n^{(k)})^+$ being the restriction of $\sigma_n$ to~$\mathcal{A}_{n+1}^{(k)}$.
Hence
\begin{multline*}
|\{\mathcal{L}_x: x \in X_{\boldsymbol{\sigma}}\}| \le |\{\mathcal{L}_x: x \in Y_k\}| + |\{\mathcal{L}_x: x \in X_{\boldsymbol{\sigma}} \setminus Y_k\}| \\ \le |\{\mathcal{L}_x: x \in X_{\boldsymbol{\sigma}'}\}| + |\{x \in X_{\boldsymbol{\sigma}} \setminus X_{\boldsymbol{\sigma}'}: \mbox{$x$ limit word of $\boldsymbol{\sigma}'$}\}| + |\{\mathcal{L}_x: x \in X_{\boldsymbol{\sigma}} \setminus Y_k\}|.
\end{multline*}
By the induction hypothesis, we have $|\{\mathcal{L}_x:\, x \in X_{\boldsymbol{\sigma}'}\}| \le (K^2-5K+9)(K-1)/3$.
As $\boldsymbol{\sigma}$ is everywhere growing, the same holds for $\boldsymbol{\sigma}'$, and there are at most $(K-1)(K-2)$ limit words of~$\boldsymbol{\sigma}'$ that are not in~$X_{\boldsymbol{\sigma}'}$ by Lemma~\ref{l:limitwords}.
Putting everything together, we obtain that
\[
|\{\mathcal{L}_x: x \in X_{\boldsymbol{\sigma}}\}| \le \frac{(K^2-5K+9)(K-1)}{3} + (K-1)(K-2) + 1 = \frac{(K^2-3K+5)K}{3}. \qedhere
\] 
\end{proof}

The following proposition together with Theorem~\ref{t:evrec} proves Theorem~\ref{t:substrec}.

\begin{proposition} \label{p:sigmacomp}
Let $\sigma:\, \mathcal{A} \to \mathcal{A}^+$ be a substitution. 
Then $\{\mathcal{L}_x:\, x \in X_\sigma\}$ is a finite set.
\end{proposition}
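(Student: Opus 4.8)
The plan is to prove, by induction on $|\mathcal{A}|$, that $\{\mathcal{L}_x:\,x\in X_\sigma\}$ is finite. The case $|\mathcal{A}|=1$ is immediate, since then $X_\sigma$ is empty or a single $T$-orbit. For the inductive step I would first make two standard reductions. As in Lemma~\ref{at_least_one} one has $X_\sigma=\bigcup_{k\in\mathbb{Z}}T^k\sigma(X_\sigma)$, so every $x\in X_\sigma$ admits, for each $n\ge0$, a centered $\sigma^n$-representation $x=T^{j}\sigma^n(x^{(n)})$ with $x^{(n)}\in X_\sigma$; moreover replacing $\sigma$ by a power does not affect the finiteness of $\{\mathcal{L}_x:\,x\in X_\sigma\}$, which one checks with the help of this identity. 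Call a letter \emph{persistent} if it occurs in some point of $X_\sigma$; a persistent letter occurs in $\sigma^n(\mathcal{A})$ for every $n$ (as one sees by desubstituting) and is therefore reachable, in the letter graph of $\sigma$ (with an edge $c\to d$ whenever $d$ occurs in $\sigma(c)$), from a letter lying on a cycle. Telescoping, I may thus assume that for each persistent letter $a$ there is an $N$ such that $a$ is a subword of $\sigma^n(c)$ for all $n\ge N$ whenever $a$ is reachable from $c$. Finally, if $\sigma$ has no growing letter then $\mathcal{L}_\sigma$ is finite, forcing $X_\sigma=\emptyset$; so I may assume $\sigma$ has a growing letter.

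Say a persistent letter is \emph{ubiquitous} if it occurs in every point of $X_\sigma$, and put $Y=\{x\in X_\sigma:\text{every persistent letter occurs in }x\}$. For a persistent non-ubiquitous letter $a$, the set $Y_a=\{x\in X_\sigma:a\notin\mathcal{L}_x\}$ is closed and shift-invariant, and $X_\sigma\setminus Y=\bigcup_a Y_a$ over all such $a$. I claim each $\{\mathcal{L}_x:\,x\in Y_a\}$ is finite. The set $\mathcal{B}=\{c\in\mathcal{A}:a\text{ is a subword of no }\sigma^n(c)\}$ is a proper $\sigma$-invariant subalphabet (it omits every letter from which $a$ is reachable, in particular $a$ itself), so $\tau:=\sigma|_{\mathcal{B}}$ is a substitution on $|\mathcal{B}|<|\mathcal{A}|$ letters. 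With $N$ as above, writing $x\in Y_a$ as $x=T^j\sigma^N(x')$ with $x'\in X_\sigma$ forces every letter of $x'$ into $\mathcal{B}$ (since $\sigma^N(x')$ avoids $a$), and one then checks $x'\in X_\tau$; hence $Y_a=\bigcup_{k\in\mathbb{Z}}T^k\sigma^N(X_\tau)$. As the set of subwords of $\sigma^N(x')$ depends only on $\mathcal{L}_{x'}$, and $\{\mathcal{L}_{x'}:\,x'\in X_\tau\}$ is finite by the induction hypothesis, $\{\mathcal{L}_x:\,x\in Y_a\}$ is finite. It then remains to bound $|\{\mathcal{L}_x:\,x\in Y\}|$.

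If $\sigma$ is everywhere growing, i.e.\ $|\sigma^n(a)|\to\infty$ for all $a\in\mathcal{A}$, then Proposition~\ref{p:minimalcomponents}, applied to the constant directive sequence $(\sigma)_{n\ge0}$, already gives $|\{\mathcal{L}_x:\,x\in X_\sigma\}|\le(K^2-3K+5)K/3$ with $K=|\mathcal{A}|$, and we are done. If $\sigma$ is not everywhere growing, it has a bounded letter, and the plan is to reduce to the everywhere growing case by exploiting the structure of bounded letters: a bounded letter $c$ with $|\sigma(c)|\ge2$ cannot satisfy $c\in\sigma^m(c)$ for any $m\ge1$ (otherwise $|\sigma^{km}(c)|$ would grow at least linearly in $k$), whence, after a further telescoping, $\sigma$ acts on the bounded letters as a permutation of a set of periodic letters. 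Consequently every maximal block of bounded letters appearing in $\mathcal{L}_\sigma$ is, up to a prefix and a suffix of bounded length, a power of a fixed word, leaving only finitely many such blocks up to shift and up to these bounded caps. Collapsing, in each point of $Y$, these (possibly unbounded) blocks should then yield a derived everywhere growing substitution on a bounded alphabet, to which Proposition~\ref{p:minimalcomponents} applies, and restoring the blocks multiplies the count of languages only by a fixed constant. The crux — the step I expect to require the most care — is precisely this last reduction: proving that the blocks of bounded letters are ultimately periodic of a uniformly bounded shape and controlling how they attach to the growing skeleton, so that the gluing data form a finite set. Together with the everywhere growing case, this completes the induction.
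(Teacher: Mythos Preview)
Your inductive set-up is close in spirit to the paper's, but there is a genuine gap in the step where you write $x\in Y_a$ as $x=T^j\sigma^N(x')$ with $x'\in X_\sigma$ and then assert ``one then checks $x'\in X_\tau$''. This inclusion can fail. Take $\mathcal{A}=\{0,1,2\}$ with $\sigma(0)=0120$, $\sigma(1)=11$, $\sigma(2)=22$, and $a=0$, so $\mathcal{B}=\{1,2\}$ and $\tau=\sigma|_{\mathcal{B}}$. Then $\mathcal{L}_\tau=\{1^k,2^k:k\ge0\}$ and $X_\tau=\{1^\infty,2^\infty\}$. However, since $\sigma^n(0)$ contains $0\,1^{2^{n-1}}2^{2^{n-1}}\,0$ for every $n\ge1$, every word $1^j2^k$ lies in $\mathcal{L}_\sigma$, so $x=\cdots 111222\cdots\in X_\sigma$ avoids $0$ and hence lies in $Y_0$. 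Any centered $\sigma^N$-representation of $x$ uses $x'=\cdots 111222\cdots$ (up to shift), which is in $X_\sigma\cap\mathcal{B}^\mathbb{Z}$ but \emph{not} in $X_\tau$, since $12\notin\mathcal{L}_\tau$. Thus $Y_a\ne\bigcup_k T^k\sigma^N(X_\tau)$ in general, and your inductive count on $X_\tau$ misses these extra languages.

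This is precisely the difficulty the paper's proof isolates: by Lemma~\ref{l:Xlimitword}, a point of $X_\sigma$ admitting $\sigma^n$-representations over the reduced alphabet for all $n$ lies either in the restricted substitutive shift $X_{\sigma'}$ \emph{or} on the shift-orbit of a limit word of the restricted stationary sequence, i.e.\ of a fixed point of $\sigma$. The paper therefore bounds $|\{\mathcal{L}_x:x\in X_\sigma\}|$ by $|\{\mathcal{L}_x:x\in X_{\sigma'}\}|+|\{\mathcal{L}_x:x\in X_\sigma,\ \sigma(x)=x\}|+1$ and then spends the bulk of the argument showing that $\sigma$ has only finitely many fixed points in $X_\sigma$ up to language. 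That last step is exactly where the interaction between growing and bounded letters has to be analyzed (your proposed ``crux''), and it does not reduce to Proposition~\ref{p:minimalcomponents}: one has to argue, case by case on the position of growing letters around the origin in a fixed point, that the bounded-letter gap between two nearest growing letters has uniformly bounded length, and handle separately the half-lines with no growing letter (which turn out to be eventually periodic). So your final sketched reduction is pointing in the right direction, but it cannot be grafted onto the $Y_a$ argument as written; the fixed-point correction term has to be carried along from the start.
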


\begin{proof}
First note that $X_\sigma = X_{\sigma^n}$ for all $n \ge 1$.
Therefore, we can assume that $\sigma^2(a)$ and $\sigma(a)$ share the same first letter and the same last letter for all $a \in \mathcal{A}$.
Let $\mathcal{A}_\mathrm{u}$ be the set of letters $a \in \mathcal{A}$ such that $|\sigma^n(a)| \to \infty$ and $\mathcal{A}_\mathrm{b} = \mathcal{A} \setminus \mathcal{A}_\mathrm{u}$.
Note that, if $a \in \mathcal{A}_b$ and $\sigma(a)$ contains~$a$, then $\sigma(a) = a$. 

Let $x$ be a limit word of $\boldsymbol{\sigma} = (\sigma)_{n\ge0}$.
If $x_0 \in \mathcal{A}_\mathrm{b}$, then we have $\sigma(x_0) = x_0$. 
Inductively, we obtain that $\sigma(x_j) = x_j$ for all $j \ge 0$ such that $x_{[0,j+1)} \in \mathcal{A}_\mathrm{b}^*$, as well as for all $j < 0$ such that $x_{[j,0)} \in \mathcal{A}_\mathrm{b}^*$. 
If $x_j \in \mathcal{A}_\mathrm{u}$ and $x_{[0,j)} \in \mathcal{A}_\mathrm{b}^*$, $j \ge 0$, then we obtain that $\sigma(x_{[j,\infty)}) = x_{[j,\infty)}$. 
Similarly, $x_j \in \mathcal{A}_\mathrm{u}$ and $x_{[j+1,0)} \in \mathcal{A}_\mathrm{b}^*$, $j < 0$, imply that $\sigma(x_{(-\infty,j]}) = x_{(-\infty,j]}$.
Hence $\sigma(x) = x$, and the limit words of~$\boldsymbol{\sigma}$ are exactly the fixed points of~$\sigma$. 

Now, we follow the proof of Proposition~\ref{p:minimalcomponents}. 
For $k \ge 0$, we define recursively words $w^{(k)} \in \mathcal{L}_\sigma$, alphabets $\mathcal{A}^{(k)}$ of letters $a \in \mathcal{A}$ such that $\sigma^n(a)$ does not contain $w^{(k)}$ as subword for all $n \ge 1$, and sets $Y_k$ of points $x \in X_\sigma$ having $\sigma^n$-representations in $(\mathcal{A}^{(k)})^\mathbb{Z}$ for all $n\ge0$: $w^{(0)}$ is the empty word; if $|\{\mathcal{L}_x:\, x \in X_\sigma \setminus Y_k\}| \ge 2$, then $w^{(k+1)}$ is chosen in a way that $w^{(k+1)} \in \mathcal{L}_y \setminus \mathcal{L}_x$ for some $x, y \in X_{\boldsymbol{\sigma}} \setminus Y_k$, and $w^{(k)}$ is a subword of~$w^{(k+1)}$. 
Then we have $|\mathcal{A}^{(0)}| < |\mathcal{A}^{(1)}| < \cdots < |\mathcal{A}^{(k)}| < |\mathcal{A}|$ and $|\{\mathcal{L}_x:\, x \in X_\sigma \setminus Y_k\}| = 1$ for some $k < K$.
For this~$k$, let $\sigma': \mathcal{A}^{(k)} \to (\mathcal{A}^{(k)})^+$ be the restriction of $\sigma$ to~$\mathcal{A}^{(k)}$.
As $Y_k \subseteq X_{\sigma'} \cup \{T^\ell(x):\, x \in X_\sigma \cap (\mathcal{A}^{(k)})^\mathbb{Z},\, \sigma'(x) = x,\, \ell \in \mathbb{Z}\}$, we have
\[
|\{\mathcal{L}_x: x \in X_\sigma\}| \le |\{\mathcal{L}_x: x \in X_{\sigma'}\}| + |\{\mathcal{L}_x:\, x \in X_\sigma,\, \sigma(x) = x\}| + 1.
\]
By induction on the size of the alphabet~$\mathcal{A}$, $\{\mathcal{L}_x: x \in X_{\sigma'}\}$ is a finite set, and it only remains to show that $\{\mathcal{L}_x:\, x \in X_\sigma,\, \sigma(x) = x\}$ is finite. 

Let $x \in X_\sigma$ with $\sigma(x) = x$. 
Suppose first that $x_j, x_k \in \mathcal{A}_\mathrm{u}$ and $x_{[j+1,k)} \in \mathcal{A}_\mathrm{b}^*$ for some $j < 0 \le k$.
We show that $k-j$ is bounded. 
Let $n \ge 0$ be minimal such that $x_{[j,k+1)}$ is a subword of $\sigma^{n+1}(a)$ for some $a \in \mathcal{A}$. 
Then there is a subword $\tilde{a} v a'$ of $\sigma(a)$ with $\tilde{a}, a' \in \mathcal{A}_\mathrm{u}$, $v \in \mathcal{A}_\mathrm{b}^*$, such that $x_{[j,k+1)}$ is a subword of $\sigma^n(\tilde{a} v a')$. 
Let $j < \tilde{m} \le m' \le k$ be such that $x_{[j,\tilde{m})}$ is a suffix of $\sigma^n(\tilde{a})$, $x_{[\tilde{m},m')} = \sigma^n(v)$ and $x_{[m',k+1)}$ is a prefix of $\sigma^n(a')$.
Since $v \in \mathcal{A}_\mathrm{b}^*$ and $|v| < |\sigma(a)|$, $m'-\tilde{m}$ is bounded.
As $x_{[m',k)} \in \mathcal{A}_\mathrm{b}^*$ and $\sigma(x_k)$ starts with~$x_k$, we obtain that $k-m'$ is bounded as well. 
Similarly, $\tilde{m}-j$ is bounded and, hence, $k-j$ is bounded. 
Recall that $\sigma(x_{[j+1,k)}) = x_{[j+1,k)}$. 
Since $x_{(-\infty,j]}$ and $x_{[k,\infty)}$ are determined by $x_j$ and~$x_k$, there are only finitely many possibilities for~$x$. 

Suppose now that there is some $k \ge 0$ such that $x_k \in \mathcal{A}_\mathrm{u}$ and $x_j \in \mathcal{A}_\mathrm{b}$ for all $j < k$. 
As $T^k(x)$ is also a fixed point of $\sigma$ in this case, we can assume that $k = 0$. 
For $j < 0$, let $n \ge 0$ minimal such that $x_{[j,1)}$ is a subword of $\sigma^{n+1}(a)$ for some $a \in \mathcal{A}$. 
Then there is a subword $v a'$ or $\tilde{a} v a'$ of $\sigma(a)$ with $\tilde{a}, a' \in \mathcal{A}_\mathrm{u}$, $v \in \mathcal{A}_\mathrm{b}^*$, such that $x_{[j,1)}$ is a subword of $\sigma^n(v a')$ or $\sigma^n(\tilde{a} v a')$. 
Let $j \le \tilde{m} \le m' \le 0$ be such that $x_{[j,\tilde{m})}$ is a suffix of $\sigma^n(\tilde{a})$, $x_{[\tilde{m},m')} = \sigma^n(v)$ and $x_{[m',1)}$ is a prefix of $\sigma^n(a')$, with $\tilde{m} = j$ when $x_{[j,1)}$ is a subword of $\sigma^n(v a')$,
As in the previous paragraph, $|m'|$ and $m' - \tilde{m}$ are bounded, thus $|\tilde{m}|$ is bounded. 
Therefore, there are some $\tilde{m} < 0$ and $\tilde{a} \in \mathcal{A}_\mathrm{u}$ such that, for infintely many $j<0$, $x_{[j,\tilde{m})}$ is a suffix of $\sigma^n(\tilde{a})$ for some $n \ge 0$.
Since this determines $x_{(-\infty,\tilde{m})}$, $x_0$ determines $x_{[0,\infty)}$ and $|\tilde{m}|$ is bounded, there are again only finitely many possibilities for~$x$. 

Finally, suppose that $x \in \mathcal{A}_\mathrm{b}^\mathbb{Z}$.
Now, for each $k > 0$, there exist $\tilde{a}, a' \in \mathcal{A}_\mathrm{u}$, $n \ge 0$, $-k \le \tilde{m} \le m' \le k$, such that $x_{[-k,\tilde{m})}$ is a suffix of $\sigma^n(\tilde{a})$, $x_{[m',k)}$ is a prefix of $\sigma^n(a')$, and $m'-\tilde{m}$ is bounded.
Moreover, if $k-m'$ is sufficiently large, then $\lim_{n\to\infty} \sigma^n(a'a'\cdots)$ is an eventually periodic sequence. 
Similarly, $\lim_{n\to\infty} \sigma^n(\cdots\tilde{a}\tilde{a})$ is eventually periodic (to the left) when $\tilde{m}+k$ is sufficiently large. 
Thus $x$ is periodic with period given by $\lim_{n\to\infty} \sigma^n(\cdots\tilde{a}\tilde{a})$ or $\lim_{n\to\infty} \sigma^n(a'a'\cdots)$, or we have some $\tilde{m} \le m'$, $\tilde{a}, a' \in \mathcal{A}_\mathrm{u}$, such that $x_{(-\infty,\tilde{m})} = \lim_{n\to\infty} \sigma^n(\cdots\tilde{a}\tilde{a})$, $x_{[m,\infty)} = \lim_{n\to\infty} \sigma^n(a'a'\cdots)$, and $m' - \tilde{m}$ is bounded.
Hence, up to shifting, there are only finitely many fixed points of this form. 
Therefore, $\{\mathcal{L}_x:\, x \in X_\sigma,\, \sigma(x) = x\}$ is a finite set, which proves the proposition.
\end{proof}

\section{Bratteli-Vershik representations} \label{sec:bratt-versh-repr}

In this section, we discuss the implications of recognizability for the natural representation of an $S$-adic shift as a Bratteli-Vershik system (defined below).  
A~Bratteli-Vershik system $(X_B,\varphi_\omega)$ can be topological, with the Vershik map~$\varphi_\omega$ a homeomorphism, or measurable, with $\varphi_\omega$ defined almost everywhere.  
A~measure theoretic Bratteli-Vershik representation has been established for primitive substitutive shifts by Livshits and Vershik in \cite{Livshits-Vershik} and Canterini and Siegel in \cite{CS01b, CanSie:2001}.
Durand, Host, and Skau \cite{Durand-Host-Skau}  found a topological representation for primitive substitutions that has been extended to aperiodic  (and not necessarily primitive) substitutions by Bezuglyi, Kwiatkowski, and Medynets \cite{Bezugly:2009}. 
More generally, measure theoretic Bratteli-Vershik representations always exist for ergodic systems \cite{Vershik:1985}, as do topological  Bratteli-Vershik representations for (not necessarily primitive) aperiodic Cantor  dynamical systems \cite{medynets:06}. 
The question is when a \emph{naturally constructed} representation exists. 
For example, the representations in \cite{Livshits-Vershik} and \cite{CS01b, CanSie:2001}, and the representations for proper substitutions in  \cite{Durand-Host-Skau} and \cite{Bezugly:2009} are defined very naturally using the given substitution. 
Our approach here is to forgo the topological representation, preferring instead a measurable representation using a natural Bratteli-Vershik representation. 
One result in this section is Theorem~\ref{thm:Bratteli-Vershik}, where we show that each recognizable $S$-adic shift defined by an everywhere growing directive sequence is in fact measurably conjugate to the determined Bratteli-Vershik system. 
Our result is irrespective of any finite invariant measure through which we consider our shift. 
The \emph{natural construction} of the representation of the $S$-adic shift $(X_{\boldsymbol{\sigma}},T)$ as a Bratteli-Vershik system relies on a  natural sequence of  generating partitions of~$X_{\boldsymbol{\sigma}}$  (which are Kakutani-Rohlin partitions) that is introduced in \eqref{eq:partisadic} and Lemma~\ref{l:generating}. 
It is in the spirit of the representation found by \cite{CS01b, CanSie:2001} in the substitutive setting.

\subsection{From $S$-adic shifts to Bratteli-Vershik maps }
\begin{definition}\label{Definition_Bratteli_Diagram}
A~\emph{Bratteli diagram} is an infinite graph $B=(V,E)$ such that the vertex set $V=\bigcup_{n\geq 0}V_n$ and the edge set $E = \bigcup_{n\geq 0} E_n$ are partitioned into pairwise disjoint, nonempty subsets $V_n$ and~$E_n$, where
\renewcommand{\theenumi}{\roman{enumi}}
\begin{enumerate}
\item
$V_0=\{v_0\}$ is a single point;
\item
$V_n$ and $E_n$ are finite sets;
\item
there exists a range map~$r:E\rightarrow V$ and a source map~$s:E\rightarrow V$ such that $r(E_n)= V_{n+1}$ and  $s(E_n)= V_n$ for each $n\geq 0$.
\end{enumerate}
\end{definition}

The set $V_n$ is called the $n$-th level of the diagram~$B$. 
A~finite or infinite sequence of edges~$(e_n)$ with $e_n\in E_n$ such that $r(e_n)=s(e_{n+1})$ is called a \emph{finite} or \emph{infinite path}, respectively. 
For $0 \le m<n$, $v \in V_{m}$ and $w \in V_{n}$, let $E(v,w)$ denote the set of all paths $e = (e_m,\ldots, e_{n-1})$ with $s(e_m)=v$ and $r(e_{n-1})=w$. 
For a Bratteli diagram~$B$, let $X_B$ be the set of infinite paths $(x_0,x_1,\dots)$ starting at the top vertex~$v_0$.
For a finite path $e =  (e_m,\ldots, e_{n-1})  \in E(v,w)$, let  $U(e) = \{x\in X_B:\, x_i=e_i,\, m\le i < n\}$.
We endow~$X_B$ with the topology generated by the cylinder sets~$U(e)$, $e\in E(v,w)$, $v\in V_m$, $w\in V_n$, $0\leq m<n$.  
With this topology, cylinder sets are clopen, and $X_B$ is a $0$-dimensional compact metric space that may contain isolated points.  

Given a Bratteli diagram~$B$, the $n$-th \emph{incidence matrix} $F_{n} = (f^{(n)}_{w,v})$, $n \ge 0$, is a $|V_{n+1}|\times |V_n|$ matrix whose entries $f^{(n)}_{w,v}$ are equal to the number of edges between the vertices $w\in V_{n+1}$ and $v\in V_{n}$, i.e., $f^{(n)}_{w,v} = |\{e\in E_n : s(e) = v, r(e) = w\}|$.
A~Bratteli diagram~$B$ is \emph{simple} if for each $n$ there exists $N>n$ such that $E(v,w) \ne \emptyset$ for all $v \in V_n$ and $w \in V_N$.
 
A~Bratteli diagram $B=(V,E) $ is called \emph{ordered} if a linear order ``$>$" is defined on every set $r^{-1}(v)$, $v\in \bigcup_{n\ge 1} V_n$. 
We use ~$\omega$ to denote the corresponding partial order on~$E$ and write $(B,\omega)$ when we consider $B$ with the ordering~$\omega$.
For example, Figure~\ref{Bratteli_pic} contains the first three levels of a Bratteli diagram with incidence matrices $F_0$ and~$F_1$, and where $r^{-1}(v)$ is linearly ordered.

\begin{figure}[ht]
\begin{tikzpicture}
 \draw (0,0) -- (0,-1);
  \draw (0,0) -- (-1,-1); \draw (0,0) -- (1,-1);
   \draw (-1,-1) -- (-2,-2); \draw (-1,-1) -- (-1,-2);  \draw (-1,-1) -- (0,-2);\draw (-1,-1) -- (1,-2);
    \draw (0,-1) -- (-1,-2);   \draw (0,-1) -- (0,-2);\draw (0,-1) -- (1,-2);\draw (0,-1) -- (2,-2);
    \draw (1,-1) -- (1,-2); \draw (1,-1) -- (2,-2);
    \draw [rounded corners] (-1,-1) -- (-0.9,-1.5) -- (-1,-2);
     \draw [rounded corners] (-1,-1) -- (-1.1,-1.5) -- (-1,-2);
     \draw [rounded corners] (1,-1) -- (1.6,-1.5) -- (2,-2);
     \draw [rounded corners] (1,-1) -- (1.4,-1.5) -- (2,-2);
     \draw [rounded corners] (1,-1) -- (1.7,-1.5) -- (2,-2);
     \draw[fill] (0,-1) circle [radius=0.05];  
      \draw[fill] (-1,-1) circle [radius=0.05]; 
       \draw[fill] (1,-1) circle [radius=0.05]; 
       \draw[fill] (-1,-2) circle [radius=0.05]; 
        \draw[fill] (0,-2) circle [radius=0.05]; 
         \draw[fill] (-1,-2) circle [radius=0.05]; 
          \draw[fill] (2,-2) circle [radius=0.05];
             \draw[fill] (-2,-2) circle [radius=0.05];   \draw[fill] (1,-2) circle [radius=0.05];
                \draw[fill] (0,0) circle [radius=0.05];
                \node at (0.3,0) {$\,\scriptstyle v_0$};
                \node at (1.1,-2) {$\,\scriptstyle v$};
                 \node at (0.5,-1.9) {$\,\scriptscriptstyle 0$};
                   \node at (0.4,-1.6) {$\,\scriptscriptstyle 1$};
                     \node at (0.85,-1.6) {$\,\scriptscriptstyle 2$};
                     \node at (4,-0.5){$\scriptstyle {F_0^T=\begin{pmatrix} 1& 1& 1
                     \end{pmatrix}}$};
                      \node at (-3,0){\tiny{Level 0}};
                       \node at (-3,-1){\tiny{Level 1}}; \node at (-3,-2){\tiny{Level 2}};
     \node at (4,-1.5){$ \scriptstyle{F_1^T=\begin{pmatrix} 1& 3& 1&1&0 \\ 0 & 1 &1 & 1& 1 \\0&0&0&1&4 
   \end{pmatrix}}$};
\end{tikzpicture} 
\caption{The first three levels of a Bratteli diagram with $r^{-1}(v)$ ordered.}
 \label{Bratteli_pic}
\end{figure}
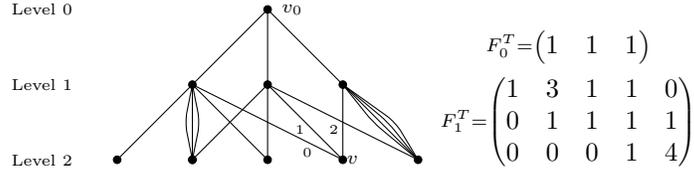

Let $m<n$ and let $(e_m,\dots,e_{n-1}) $ be a finite path in $B$. Its \emph{range} is $r(e_{n-1})$. An order~$\omega$ on~$B$ defines the \emph{lexicographic} ordering on the set of finite paths between vertices of levels $V_m$ and~$V_n$ with a common range: $(e_m,\dots,e_{n-1}) > (e'_m,\dots,e'_{n-1})$ if and only if there is $i$ with $m \le i < n$, $e_j=f_j$ for $i<j< n$ and $e_i> f_i$.
Thus, given an order~$\omega$, any two finite paths with a common range are comparable with respect to the lexicographic ordering generated by~$\omega$. 
If two infinite paths $(e_n)$ and  $(e'_n)$ are \emph{tail equivalent}, that is there exists $N$ such that  $e_n=e'_n$ for $n\geq N$,  then we can compare them by comparing their initial segments up to level~$N$. 
Thus $\omega$ defines a partial order on~$X_B$, where two infinite paths are comparable if and only if they are tail equivalent. 

We shall always assume that our Bratteli diagram~$B$ is  \emph{aperiodic} with respect to the tail equivalence relation, that is, no tail equivalence class is finite. 
Analogously to our earlier definition for directive sequences, we shall say that $B$ is \emph{everywhere growing} if $\lim_{n\to\infty} \min_{v\in V_n} |E(v_0, v)|= \infty$. 
Everywhere growing Bratteli diagrams are aperiodic.
 
Given an order~$\omega$ on a Bratteli diagram~$B$, we call a finite or infinite path $e=(e_n)$ \emph{maximal (minimal)} if every $e_n$ is maximal (minimal) amongst the edges from $r^{-1}(r(e_n))$.  
An ordered Bratteli diagram $(B, \omega)$ is called \emph{properly ordered} if the sets of minimal and maximal paths, $X_{\max}(\omega)$ and $X_{\min}(\omega)$, are singletons. 

Let $(B,\omega)$ be an ordered Bratteli diagram. 
If $x=(x_0, x_1, \ldots)$ is a non-maximal path in~$X_B$, then let $n$ be the smallest integer such that $x_n$ is not a maximal edge. 
Then the path $y=(y_0,y_1, \ldots)$, which agrees with $x$ from level $n{+}1$ onwards, where $(y_0,\ldots y_{n-1})$ is minimal and $y_n$ is the successor of~$x_n$, is called the \emph{successor} of~$x$ with respect to~$\omega$. 
We say that $\varphi_\omega:\, X_B  \to X_B$ is a \emph{Vershik map} if, for each $x \in X_B \setminus X_{\max}(\omega)$, $\varphi_\omega(x)$ is the successor of~$x$ with respect to~$\omega$.
We call $(X_B,\varphi_\omega)$ a \emph{Bratteli-Vershik dynamical system}.
Note that $\varphi_\omega:\, X_B \setminus X_{\max}(\omega) \to X_B \setminus X_{\min}(\omega)$ is bijective. 
In some cases, mainly when $\omega$ is a proper order, $\varphi_\omega$ can be extended to a homeomorphism on~$X_B$. 
Note that a properly ordered simple Bratteli diagram defines a minimal Cantor dynamical system.   
In this article, we do not assume that our orders are proper, nor are we concerned with whether $\varphi_\omega$ can be extended to a homeomorphism. 

For Bratteli diagrams whose vertex sets~$V_n$ are unbounded in size, the sets $X_{\min}(\omega)$ and~$X_{\max}(\omega)$ are not necessarily countable. 
For example, for a large family of diagrams defined in \cite{JQY}, almost any order defined on these diagrams will have uncountably many maximal and minimal paths. 
However, if $B$ is everywhere growing, we still have the following lemma, which is similar in spirit to \cite[Lemma~2.7]{BKMS:2010}; see also \cite[Prop 2.1]{FPS:2015} for a proof of the same result for the \emph{Pascal} Bratteli diagram.
Here, a~measure $\mu$ defined on the $\sigma$-algebra generated by cylinder sets in~$X_B$ is said to be \emph{invariant with respect to the tail equivalence relation} if, for any vertex $v\in V$, it gives equal $\mu$-mass to any cylinder set defined by a path from $v_0$ to~$v$.
  
\begin{lemma}\label{small_base}
Suppose that $B$ is an everywhere growing Bratteli diagram. 
Then for any order~$\omega$ on~$B$ and any probability measure~$\mu$ that is invariant with respect to the tail equivalence relation, we have $\mu(X_{\min}(\omega)) =\mu(X_{\max}(\omega)) = 0$. 
\end{lemma}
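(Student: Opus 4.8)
The plan is to fix a vertex $v \in V_n$ and bound the measure of the set of minimal paths in terms of how many paths from $v_0$ reach $v$. The key observation is that a path $x \in X_{\min}(\omega)$ has the property that for every $m \le n$, its initial segment $(x_0,\dots,x_{m-1})$ is the (unique) minimal path in $E(v_0, r(x_{m-1}))$. Hence, fixing a level $n$, every minimal path passes through some vertex $v \in V_n$ via \emph{the} minimal path in $E(v_0,v)$, so
\[
X_{\min}(\omega) \subseteq \bigcup_{v \in V_n} U(e_{\min}(v_0,v)),
\]
where $e_{\min}(v_0,v)$ denotes the minimal path from $v_0$ to $v$ (when it exists). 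Each cylinder $U(e_{\min}(v_0,v))$ is one of the $|E(v_0,v)|$ cylinders associated to paths from $v_0$ to $v$, all of which receive equal $\mu$-mass by invariance of $\mu$ under the tail equivalence relation. Therefore $\mu(U(e_{\min}(v_0,v))) = \mu(r^{-1}\text{-fibre mass at }v)/|E(v_0,v)| \le 1/|E(v_0,v)|$, since the cylinders $U(e)$, $e \in E(v_0,v)$, are disjoint and their union has $\mu$-mass at most $1$.

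Summing over $v \in V_n$ gives
\[
\mu(X_{\min}(\omega)) \le \sum_{v \in V_n} \frac{1}{|E(v_0,v)|} \le \frac{|V_n|}{\min_{v \in V_n} |E(v_0,v)|}.
\]
This bound is not yet good enough by itself, since $|V_n|$ could grow. To fix this, I would instead sum the masses more carefully: the sets $U(e_{\min}(v_0,v))$ for distinct $v \in V_n$ are disjoint (paths ending at different vertices are different), so $\sum_{v\in V_n}\mu(U(e_{\min}(v_0,v))) \le 1$, and for \emph{each} $v$ the mass of that single cylinder is at most $\mu(X_B^{(v)})/|E(v_0,v)|$ where $X_B^{(v)}$ is the (disjoint over $v$) union of cylinders through $v$. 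Writing $p_v = \mu(X_B^{(v)})$, so $\sum_v p_v \le 1$, we get $\mu(X_{\min}(\omega)) \le \sum_{v\in V_n} p_v/|E(v_0,v)| \le \max_v (1/|E(v_0,v)|) \cdot \sum_v p_v \le 1/\min_{v\in V_n}|E(v_0,v)|$. Since $B$ is everywhere growing, $\min_{v\in V_n}|E(v_0,v)| \to \infty$ as $n \to \infty$, and this holds for every $n$, so letting $n \to \infty$ yields $\mu(X_{\min}(\omega)) = 0$. The argument for $X_{\max}(\omega)$ is identical, replacing ``minimal'' by ``maximal'' throughout.

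The main subtlety — really the only place one must be careful — is the measurability of $X_{\min}(\omega)$ and $X_{\max}(\omega)$: one must check that these sets lie in the $\sigma$-algebra generated by cylinders so that $\mu$ assigns them a value, and that the containment $X_{\min}(\omega) \subseteq \bigcup_{v\in V_n} U(e_{\min}(v_0,v))$ is valid for every $n$. The first follows because $X_{\min}(\omega) = \bigcap_{n\ge 1} \bigcup_{v \in V_n} U(e_{\min}(v_0,v))$ is a countable intersection of clopen sets, hence closed, hence measurable; the second is immediate from the definition of a minimal path. Everything else is the elementary estimate above, using only that $\mu$ is a probability measure invariant under the tail equivalence relation and that $B$ is everywhere growing. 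I would present the minimal-path case in full and remark that the maximal case is symmetric.
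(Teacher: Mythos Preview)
Your proposal is correct and follows essentially the same approach as the paper: cover $X_{\min}(\omega)$ (resp.\ $X_{\max}(\omega)$) by the union over $v\in V_n$ of the cylinders of the unique minimal (resp.\ maximal) path from $v_0$ to $v$, and use tail-invariance of $\mu$ together with the everywhere-growing hypothesis to bound this union's measure by $1/\min_{v\in V_n}|E(v_0,v)|\to 0$. The paper states the key inequality $\mu(C_n)\le 1/N$ in one line ``by the invariance of~$\mu$''; your weighted-sum computation with $p_v=\mu(X_B^{(v)})$ is exactly the justification behind that line, so your write-up is in fact more explicit than the paper's.
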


\begin{proof}
We use the fact that, for each~$n$, the set of paths from the top vertex~$v_0$ to level~$n$ of the Bratteli diagram defines cylinder sets which form a partition of~$X_B$.
Notice that the sets $X_{\max}(\omega)$ and $X_{\min}(\omega)$ are measurable as they are countable intersections of  finite unions of cylinder sets.
We show that  $\mu(X_{\max}(\omega))=0$, the other assertion following similarly. 
Fix an arbitrary~$n$. 
Since $B$ is everywhere growing, we can choose a level~$n$ so that any vertex in~$V_n$ has at least~$N$ incoming paths from~$v_0$. 
Let $C_n$ stand for the union of cylinder sets to level~$n$  which are defined by maximal paths. 
By the invariance of~$\mu$, we have $\mu(C_N) \le \frac{1}{N}$. 
Since $X_{\max}(\omega)$ is a subset of~$C_N$, we have $\mu(X_{\max}(\omega)) \le \frac{1}{N}$, and the result follows. 
\end{proof}

Note that if $\mu$ is a measure on~$X_B$ and $(X_B, \varphi_\omega,\mu)$ is a measure preserving system (in particular, this requires $\mu( X_{\max}(\omega)) = \mu(X_{\min}(\omega)) = 0$), then $\mu$ must be invariant with respect to the tail equivalence relation. 
Conversely, if $\mu$ on~$X_B$ is invariant under the tail equivalence relation and if a Vershik map~$\varphi_\omega$ on~$X_B$ is defined on a set of full $\mu$-mass, then $\mu$ is $\varphi_\omega$-invariant; see e.g.~\cite[Lemma~2.7]{BKMS:2010} or \cite[Proposition~2.11]{Fisher:09}.  
Finally, there is always at least one measure which is invariant with respect to the tail equivalence relation. 
Indeed, if the Bratteli diagram is simple, then it admits a proper order that makes the Vershik map a homeomorphism, so by the Krylov-Bogolyubov theorem, there is always at least one invariant measure w.r.t.\ the Vershik map and, hence, w.r.t.\ the tail equivalence relation. 
Otherwise, if $B$ is everywhere growing, then  the tail equivalence relation on $X_B$ is aperiodic.
Since $\bigcap_{n\geq k}F_k^T \ldots F_n^T \mathbb{R}_{+}^{|V_{n+1}|}$ is closed and non-empty for each~$k$, there exists a sequence of vectors $(p^{(n)})$ in $\mathbb R_{+}^{|V_n|}$ which satisfy $F_n^T p^{(n+1)}=p^{(n)}$ for each~$n$.
Now we can apply the second part of  \cite[Theorem 2.11]{BKMS:2010} to conclude that there exists at least one finite Borel measure which is invariant with respect to the tail equivalence relation.

We now give a certain way to represent elements of a recognizable $S$-adic shift that will be crucial for its relation with Bratteli-Vershik systems; see \cite{CS01b} for the case of substitutive shifts. 
Given a directive sequence~$\boldsymbol{\sigma}$ and $n\in \mathbb N$, define 
\begin{equation}\label{eq:partisadic}
\mathcal{P}_n = \{ T^k \sigma_{[0,n)}([a]):\, a \in \mathcal{A}_{n}, 0\leq k<|\sigma_{[0,n)}(a)| \}.
\end{equation}
We can think of $\mathcal{P}_n$ as consisting of $ |\mathcal A_n|$ towers. 
If ${\boldsymbol{\sigma}}$ is everywhere growing, then the minimum height of the towers in $\mathcal{P}_n$ tends to infinity.

For each $n\in \mathbb{N}$, $\mathcal{P}_n$~is a clopen cover of~$X_{\boldsymbol{\sigma}}$. We say that a sequence $(k_n, a_n)_{n\in\mathbb N}$ is a \emph{$(\mathcal{P}_n)$-address} for $x \in X_{\boldsymbol{\sigma}}$  if   $x\in T^{k_n} \sigma_{[0,n)}([a_n])$ for each $n\in \mathbb N$.    Each point $x \in X_{\boldsymbol{\sigma}}$ has at least one $(\mathcal{P}_n)$-address.
Let $\mu$ be  a shift invariant probability measure on~$X_{\boldsymbol{\sigma}}$.
The covers $(\mathcal{P}_n)_{n=0}^{\infty}$ are \emph{generating in  measure} if $\mu$-almost every $x \in X_{\boldsymbol{\sigma}}$ has a $(\mathcal{P}_n)$-address that uniquely determines~$x$. 
Limit words, i.e., points in $X_{\boldsymbol{\sigma}} \cap \bigcap_{n\in\mathbb{N}} \sigma_{[0,n)}(\mathcal{A}_n^\mathbb{Z})$, have $(\mathcal{P}_n)$-addresses $(0,a_n)_{n\in \mathbb N}$ that tell us nothing about their left infinite part. 
Thus if a one-sided limit point~$\bar{x}$ in $\bigcap_{n\in\mathbb{N}} \sigma_{[0,n)}(\mathcal{A}_n^\mathbb{N})$ can be preceded by two distinct ``pasts", i.e., if there exist two distinct limit points in $X_{\boldsymbol{\sigma}} \cap \bigcap_{n\in\mathbb{N}} \sigma_{[0,n)}(\mathcal{A}_n^\mathbb{Z})$ whose right infinite part is the same, then the $(\mathcal{P}_n)$-addresses with $k_n=0$ for all $n \ge 0$ do not distinguish these two points. 
On the other hand, we have the following lemma.

\begin{lemma}\label{l:generating}
Let $\boldsymbol{\sigma} = (\sigma_n)_{n\ge0}$ be a sequence of morphisms. 
If $x \in X_{\boldsymbol{\sigma}}$ does not belong to the shift orbit of a limit word of~$\boldsymbol{\sigma}$, then $x$ is uniquely determined by any $(\mathcal{P}_n)$-address of~$x$.
If $\boldsymbol{\sigma}$ is recognizable, then for all $n \ge 0$, $\sigma_n:\, X_{\boldsymbol{\sigma}}^{(n+1)} \to \sigma_n(X_{\boldsymbol{\sigma}}^{(n)})$ is a homeomorphism and the covers~$\mathcal{P}_n$ are partitions. 
If $\boldsymbol{\sigma}$ is recognizable and everywhere growing and $\mu$ is a shift invariant probability measure on~$X_{\boldsymbol{\sigma}}$, then $\mu(\bigcap_{n=0}^{\infty} \sigma_{[0,n)}(X_{\boldsymbol{\sigma}}^{(n)})) = 0$, and $(\mathcal P_n)$ is generating in measure.
\end{lemma}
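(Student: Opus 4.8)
The plan is to prove the three assertions of Lemma~\ref{l:generating} in turn, with the first carrying the real content.

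\emph{First assertion.} Suppose $x,x' \in X_{\boldsymbol{\sigma}}$ share a $(\mathcal{P}_n)$-address; I want to conclude $x=x'$ unless $x$ lies in the shift orbit of a limit word. Unwinding the address, at each level $n$ the common cell is $T^{k_n}\sigma_{[0,n)}([a_n])$ with $a_n\in\mathcal{A}_n$ and $0\le k_n<|\sigma_{[0,n)}(a_n)|$, so $x=T^{k_n}\sigma_{[0,n)}(y^{(n)})$ and $x'=T^{k_n}\sigma_{[0,n)}(y'^{(n)})$ for some $y^{(n)},y'^{(n)}\in X_{\boldsymbol{\sigma}}^{(n)}$ with first letter $a_n$. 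Since the block of $T^{-k_n}(x)$ and of $T^{-k_n}(x')$ starting at position $0$ is $\sigma_{[0,n)}(a_n)$ in both cases, $x_p=x'_p$ for every $p$ with $-k_n\le p<|\sigma_{[0,n)}(a_n)|-k_n$; letting $n$ vary, these windows exhaust $\mathbb{Z}$ unless $\sup_n k_n<\infty$ or $\sup_n(|\sigma_{[0,n)}(a_n)|-k_n)<\infty$. In the first case $k_n$ equals a fixed $k$ for infinitely many $n$, and then $T^{-k}(x)=\sigma_{[0,n)}(y^{(n)})\in\sigma_{[0,n)}(X_{\boldsymbol{\sigma}}^{(n)})$ for those $n$; in the second, $|\sigma_{[0,n)}(a_n)|-k_n$ equals a fixed $r$ for infinitely many $n$, and the identity $T^r(x)=\sigma_{[0,n)}(T(y^{(n)}))$ puts $T^r(x)$ into $\sigma_{[0,n)}(X_{\boldsymbol{\sigma}}^{(n)})$ for those $n$. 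Since $\sigma_{[0,n+1)}(X_{\boldsymbol{\sigma}}^{(n+1)})=\sigma_{[0,n)}(\sigma_n(X_{\boldsymbol{\sigma}}^{(n+1)}))\subseteq\sigma_{[0,n)}(X_{\boldsymbol{\sigma}}^{(n)})$ by Lemma~\ref{at_least_one}, the sets $\sigma_{[0,n)}(X_{\boldsymbol{\sigma}}^{(n)})$ decrease in $n$, so in either case $x\in\bigcup_{j\in\mathbb{Z}}T^j\bigl(\bigcap_{n\ge0}\sigma_{[0,n)}(X_{\boldsymbol{\sigma}}^{(n)})\bigr)$. As this intersection consists of limit words, the assertion follows; I would moreover keep the sharper statement that the set of points of $X_{\boldsymbol{\sigma}}$ not determined by their address is contained in $\bigcup_{j\in\mathbb{Z}}T^j\bigl(\bigcap_{n\ge0}\sigma_{[0,n)}(X_{\boldsymbol{\sigma}}^{(n)})\bigr)$.

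\emph{Second assertion.} Assume $\boldsymbol{\sigma}$ recognizable. First I would record, by iterating Lemma~\ref{lem:tele} along $\sigma_{[0,n)}=\sigma_0\circ\sigma_{[1,n)}$ and using Lemma~\ref{at_least_one} to identify $\bigcup_{k\in\mathbb{Z}}T^k\sigma_{[m,n)}(X_{\boldsymbol{\sigma}}^{(n)})=X_{\boldsymbol{\sigma}}^{(m)}$, that $\sigma_{[0,n)}$ is recognizable in $X_{\boldsymbol{\sigma}}^{(n)}$ for every $n$. Together with the existence (Lemma~\ref{at_least_one}) of at least one centered $\sigma_{[0,n)}$-representation in $X_{\boldsymbol{\sigma}}^{(n)}$ for each point, this makes that representation unique, so the cells of $\mathcal{P}_n$ are pairwise disjoint and $\mathcal{P}_n$ is a partition. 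For the homeomorphism, $\sigma_n$ is continuous on the compact space $X_{\boldsymbol{\sigma}}^{(n+1)}$, and recognizability at level $n$ forces injectivity there: if $\sigma_n(x)=\sigma_n(x')$ then $(0,x)$ and $(0,x')$ are centered $\sigma_n$-representations of the same point, so $x=x'$; a continuous injection from a compact space is a homeomorphism onto its image.

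\emph{Third assertion.} Now assume in addition $\boldsymbol{\sigma}$ everywhere growing and $\mu$ a shift invariant probability measure on $X_{\boldsymbol{\sigma}}$. By the second assertion $\mathcal{P}_n$ is a partition into towers, the tower over $a$ having base $\sigma_{[0,n)}([a])$ and height $|\sigma_{[0,n)}(a)|\ge m_n:=\min_a|\sigma_{[0,n)}(a)|$, with $m_n\to\infty$. Summing $\mu$ over the cells and using shift invariance gives $\sum_{a\in\mathcal{A}_n}|\sigma_{[0,n)}(a)|\,\mu(\sigma_{[0,n)}([a]))=1$, so, the bases being disjoint, $\mu(\sigma_{[0,n)}(X_{\boldsymbol{\sigma}}^{(n)}))=\sum_a\mu(\sigma_{[0,n)}([a]))\le 1/m_n$. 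Since the sets $\sigma_{[0,n)}(X_{\boldsymbol{\sigma}}^{(n)})$ are closed and decreasing, continuity of $\mu$ from above yields $\mu\bigl(\bigcap_{n=0}^{\infty}\sigma_{[0,n)}(X_{\boldsymbol{\sigma}}^{(n)})\bigr)=0$. Finally, by the sharp form of the first assertion the set of points of $X_{\boldsymbol{\sigma}}$ not determined by their $(\mathcal{P}_n)$-address is a countable union of shifts of this null set, hence $\mu$-null; thus $(\mathcal{P}_n)$ is generating in measure. The main obstacle is the first assertion: one must turn ``two distinct points sharing an address'' into ``a fixed position that is a $\sigma_{[0,n)}$-cutting point at infinitely many — hence, by the nesting of the $X_{\boldsymbol{\sigma}}^{(n)}$, at all — levels'', and crucially land inside $\bigcap_n\sigma_{[0,n)}(X_{\boldsymbol{\sigma}}^{(n)})$ rather than merely among limit words, since it is exactly that set which the everywhere-growing hypothesis controls through the Kakutani–Rohlin tower estimate of the third assertion; this forces careful bookkeeping of whether representations are taken in $X_{\boldsymbol{\sigma}}^{(n)}$ or in $\mathcal{A}_n^{\mathbb{Z}}$, a distinction that also underlies the iteration of Lemma~\ref{lem:tele} in the second assertion.
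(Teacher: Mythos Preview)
Your proof is correct and follows essentially the same approach as the paper's: both argue that the address determines $x$ on the window $[-k_n,|\sigma_{[0,n)}(a_n)|-k_n)$, then split into the cases where one of $k_n$ or $|\sigma_{[0,n)}(a_n)|-k_n$ stays bounded (yielding a shift of a limit word) versus both unbounded (yielding $x=x'$), and both use the Kakutani--Rohlin tower estimate $\mu(\sigma_{[0,n)}(X_{\boldsymbol{\sigma}}^{(n)}))\le 1/m_n$ for the measure-zero claim. Your write-up is in fact more careful than the paper's in two respects: you explicitly derive that $\mathcal{P}_n$ is a partition from recognizability of $\sigma_{[0,n)}$ via Lemma~\ref{lem:tele}, and you isolate the sharper conclusion $T^{-k}(x)\in\bigcap_n\sigma_{[0,n)}(X_{\boldsymbol{\sigma}}^{(n)})$ (rather than merely ``limit word'') that is exactly what the tower estimate controls---the paper's proof leaves this linkage implicit.
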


\begin{proof}
Let $x \in X_{\boldsymbol{\sigma}}$ with $x \in T^{k_n}\sigma_{[0,n)}([a_n])$, $0 \le k_n < |\sigma_{[0,n)}(a_n)|$, for all $n \ge 0$. 
If $k_n$ is bounded, then we have some $k$ such that $k_n = k$ infinitely often, and $T^{-k}(x)$ is a limit word of~$\boldsymbol{\sigma}$. 
Similarly, if $|\sigma_{[0,n)}(a_n)|-k_n$ is bounded, then we have some $k$ such that $|\sigma_{[0,n)}(a_n)|-k_n = k$ infinitely often, and $T^k(x)$ is a limit word of~$\boldsymbol{\sigma}$. 
If both $k_n$ and $|\sigma_{[0,n)}(a_n)|-k_n$ are unbounded, then $x_{[-k_n, |\sigma_{[0,n)}(a_n)|-k_n)} = \sigma_{[0,n)}(a_n)$, $n \ge 0$, determines~$x$.

Recognizability immediately implies that $\sigma_n$ is a homeomorphism.  

If $\boldsymbol{\sigma}$ is recognizable and everywhere growing, then let $n \ge 0$ be such that $|\sigma_{[0,n)}(a)| \ge N$ for all $a \in \mathcal{A}_n$. 
As $\mathcal{P}_n$ forms a partition of~$X_{\boldsymbol{\sigma}}$ and $\mu$ is $T$-invariant, we have $\mu(\sigma_{[0,n)}(X_{\boldsymbol{\sigma}}^{(n)})) \le \frac{1}{N}$ and thus $\mu(\bigcap_{n=0}^{\infty} \sigma_{[0,n)}(X_{\boldsymbol{\sigma}}^{(n)})) = 0$.
The result follows.
\end{proof}

Given a directive sequence~$\boldsymbol{\sigma}$, we can define an associated \emph{natural}  ordered Bratteli diagram $(B,\omega)$, said to  be associated with~$\boldsymbol{\sigma}$, as follows. 
For $n\geq 1$, $V_n$~is a copy of~$\mathcal{A}_{n-1}$. 
The matrix $F_0$ is the vector $(1,1,\ldots,1)$ of size~$|\mathcal{A}_0|$. 
For $n\geq 1$, the $n$-th incidence matrix $F_n$ for the Bratteli diagram $B$ is the transpose of the incidence matrix of~$\sigma_{n-1}$. 
The substitutions~$\sigma_n$ also define the order $\omega$  that we consider on~$B$. 
Given a vertex $v \in V_{n+1}$ labelled by the letter $a \in \mathcal{A}_{n}$, we order the edges with range~$a$ as follows: if $b$ is the $j$-th letter in $\sigma_{n-1}(a)$, then we label an edge with source~$b$ and range~$a$ with $j{-}1$. 
In particular, if infinitely many of the morphisms~$\sigma_n$ are proper, then $\omega$ is a proper order, and $\varphi_\omega:\, X_B \to X_B$ is a homeomorphism.

Notice that $\boldsymbol{\sigma}$ is everywhere growing if and only if the diagram~$B$ in the natural Bratteli diagram $(B,\omega)$  associated with~$\boldsymbol{\sigma}$ is everywhere growing. 
Now we can reformulate Lemma~\ref{small_base} as follows: 

\begin{lemma}\label{small_base2}
Let $\boldsymbol{\sigma}$ be an  everywhere growing directive sequence and let  $(B,\omega)$ be the natural Bratteli  diagram associated  with $\boldsymbol{\sigma}$.
Then,   for any probability measure~$\mu$ that is invariant with respect to the tail equivalence relation, we have $\mu(X_{\min}(\omega)) =\mu(X_{\max}(\omega)) = 0$,  so
that  the Vershik map $\varphi_\omega$ is defined $\mu$-almost everywhere, and $ (X_B,\varphi_\omega, \mu)$   is a measure preserving  dynamical system.\end{lemma}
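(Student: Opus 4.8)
The plan is to deduce this statement directly from Lemma~\ref{small_base} together with the observations made just before the statement. First I would invoke the remark that $\boldsymbol{\sigma}$ is everywhere growing if and only if the diagram~$B$ of the natural ordered Bratteli diagram $(B,\omega)$ associated with~$\boldsymbol{\sigma}$ is everywhere growing. Since $\boldsymbol{\sigma}$ is assumed everywhere growing, $B$ is an everywhere growing Bratteli diagram, so Lemma~\ref{small_base} applies verbatim: for \emph{every} probability measure~$\mu$ on~$X_B$ that is invariant with respect to the tail equivalence relation, we have $\mu(X_{\min}(\omega)) = \mu(X_{\max}(\omega)) = 0$.

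Next I would record the two measure-theoretic consequences. The Vershik map~$\varphi_\omega$ is defined on $X_B \setminus X_{\max}(\omega)$, and this set has full $\mu$-measure by the previous paragraph, so $\varphi_\omega$ is defined $\mu$-almost everywhere. To see that $\mu$ is $\varphi_\omega$-invariant, I would appeal to the discussion immediately following Lemma~\ref{small_base}: a measure on~$X_B$ that is invariant under the tail equivalence relation and for which a Vershik map is defined on a set of full mass is automatically $\varphi_\omega$-invariant (see \cite[Lemma~2.7]{BKMS:2010} or \cite[Proposition~2.11]{Fisher:09}). Alternatively, one argues directly that $\varphi_\omega$ is Borel measurable and maps each cylinder set onto a cylinder set determined by a path with the same range vertex; since tail-invariance of~$\mu$ gives equal mass to all cylinders with a common range, one gets $\mu\circ\varphi_\omega^{-1} = \mu$ on the generating algebra of cylinder sets, hence on all of~$X_B$. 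Together with the fact that $\varphi_\omega$ restricts to a bijection $X_B \setminus X_{\max}(\omega) \to X_B \setminus X_{\min}(\omega)$, this shows that $(X_B,\varphi_\omega,\mu)$ is a measure preserving dynamical system.

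I do not expect a genuine obstacle here, as the statement is essentially a translation of Lemma~\ref{small_base} into the language of directive sequences. The only points needing mild care are the measurability of~$\varphi_\omega$ and the implication ``tail-invariance $\Rightarrow$ $\varphi_\omega$-invariance'', both of which are standard and already cited in the text, so the proof can be kept very short.
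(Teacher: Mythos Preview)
Your proposal is correct and matches the paper's treatment: the paper presents this lemma explicitly as a reformulation of Lemma~\ref{small_base}, relying on the observation that $\boldsymbol{\sigma}$ is everywhere growing if and only if the associated diagram~$B$ is, together with the cited fact that tail-invariance implies $\varphi_\omega$-invariance when the Vershik map is defined on a set of full measure. No separate proof is given in the paper, so your short argument is exactly what is intended.
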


As, for everywhere growing~$\boldsymbol{\sigma}$, $(X_B,\varphi_\omega, \mu)$ is a measure preserving  dynamical system for \emph{any}~$\mu$ that is invariant with respect to the tail equivalence relation, we will abuse notation, and call $(X_B,\varphi_\omega)$ the \emph{natural (measurable) Bratteli-Vershik system} associated with~$\boldsymbol{\sigma}$.

We say that a transformation $\Phi:\, (X,T) \to (Y,S)$ is an \emph{almost-conjugacy} if there is a $T$-invariant set $\mathcal{D} \subset X$ and an $S$-invariant set $\mathcal{E} \subset Y$, with $\Phi:\, X \setminus \mathcal{D} \to Y \setminus \mathcal{E}$ a continuous bijection satisfying $\Phi \circ T= S \circ \Phi$, and such that~$\mathcal{D}$  (resp.~$\mathcal{E}$)     has zero measure for \emph{every} fully supported invariant probability measure on $(X,T)$ (resp.~$(Y,S)$).
Thus, if $(X,T)$ and $(Y,S)$ are almost conjugate, $\nu$~is any probability measure on~$X$ that is preserved by~$T$, and $\mu$~is any probability measure on~$Y$ is preserved by~$S$, then $(X,T,\nu)$ and $(Y,S,\mu)$ are measure theoretically conjugate.
If $\Phi$ is as above, except not injective, then we say that $(Y,S)$ is an \emph{almost-factor} of $(X,T)$.

\begin{theorem}\label{thm:Bratteli-Vershik}
Let $\boldsymbol{\sigma}$ be an everywhere growing directive sequence and $(X_B, \varphi_\omega)$ the natural Bratteli-Vershik dynamical system associated with~$\boldsymbol{\sigma}$. 
Then $(X_{\boldsymbol{\sigma}},T)$ is an almost-factor of $(X_B, \varphi_\omega)$. 
If $\boldsymbol{\sigma}$ is recognizable, then $(X_{\boldsymbol{\sigma}},T)$ is  almost-conjugate to $(X_B, \varphi_\omega)$.
\end{theorem}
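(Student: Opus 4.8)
The plan is to realise the factor map as the map $\Phi\colon X_B\to X_{\boldsymbol\sigma}$ that reads off, from an infinite path, its $(\mathcal{P}_n)$-address in the sense of \eqref{eq:partisadic} and Lemma~\ref{l:generating}. Concretely, an infinite path $x=(x_0,x_1,\dots)\in X_B$ determines at each level~$n$ a vertex $a_n\in\mathcal{A}_n$ (the range of~$x_n$) and, by reading the edge labels below level~$n$ as in the definition of the natural order~$\omega$, an offset $k_n$ with $0\le k_n<|\sigma_{[0,n)}(a_n)|$; the cells $P_n(x):=T^{k_n}\sigma_{[0,n)}([a_n])\in\mathcal{P}_n$ are then nested and nonempty, so $\bigcap_{n\ge0}P_n(x)\ne\emptyset$ by compactness. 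A short computation shows that both $k_n$ and $|\sigma_{[0,n)}(a_n)|-k_n$ are non-decreasing in~$n$, that $k_n-k_{n-1}=0$ exactly when $x_n$ is a minimal edge and $|\sigma_{[0,n)}(a_n)|-k_n=|\sigma_{[0,n-1)}(a_{n-1})|-k_{n-1}$ exactly when $x_n$ is maximal, and that otherwise the corresponding increment is at least $\min_{a\in\mathcal A_{n-1}}|\sigma_{[0,n-1)}(a)|\to\infty$. Since $\boldsymbol\sigma$ is everywhere growing, it follows that $k_n\to\infty$ unless $x$ is eventually minimal, and $|\sigma_{[0,n)}(a_n)|-k_n\to\infty$ unless $x$ is eventually maximal. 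Let $\mathcal D\subset X_B$ be the set of eventually minimal or eventually maximal paths. For $x\in X_B\setminus\mathcal D$ the intersection $\bigcap_nP_n(x)$ is a single point $\Phi(x)$, and $\Phi\colon X_B\setminus\mathcal D\to X_{\boldsymbol\sigma}$ is continuous because the clopen cells $P_n(x)$ pin $\Phi(x)$ down to arbitrary precision.

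Next I would check that $\Phi$ intertwines the dynamics and is essentially onto. The set~$\mathcal D$ is $\varphi_\omega$-invariant, since the Vershik successor changes only finitely many coordinates of a path (so $\varphi_\omega(x)\notin X_{\max}(\omega)$ when $x\notin\mathcal D$, and $\varphi_\omega(x)\notin\mathcal D$); and $\Phi\circ\varphi_\omega=T\circ\Phi$ holds on $X_B\setminus\mathcal D$ by construction, because passing to the successor path corresponds to moving the marked point of the address one step right — incrementing $k_n$ inside a tower, and rolling over to the base of the following tower when at the top. For surjectivity, each $y\in X_{\boldsymbol\sigma}$ has a $(\mathcal{P}_n)$-address by Lemma~\ref{at_least_one}, hence a preimage path; and if $y$ is not in the shift orbit of a limit word of~$\boldsymbol\sigma$, then, as in the proof of Lemma~\ref{l:generating}, any such address has $k_n\to\infty$ and $|\sigma_{[0,n)}(a_n)|-k_n\to\infty$, so the preimage path lies in $X_B\setminus\mathcal D$. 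Setting $\mathcal E:=X_{\boldsymbol\sigma}\setminus\Phi(X_B\setminus\mathcal D)$, the set $\mathcal E$ is $T$-invariant, contained in $\bigcup_{j\in\mathbb Z}T^j\Lambda$ with $\Lambda=X_{\boldsymbol\sigma}\cap\bigcap_{n\ge0}\sigma_{[0,n)}(\mathcal A_n^{\mathbb Z})$ the set of limit words (Lemma~\ref{l:Xlimitword}), and $\Phi\colon X_B\setminus\mathcal D\to X_{\boldsymbol\sigma}\setminus\mathcal E$ is a continuous surjection commuting with the shifts.

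It remains to see that $\mathcal D$ and $\mathcal E$ are null for every fully supported invariant probability measure. The set~$\mathcal D$ is countable, since for each~$N$ a path minimal (or maximal) from level~$N$ on is determined by its initial segment of length~$N$; and any $\varphi_\omega$-invariant measure~$\mu$ is invariant under the tail equivalence relation and \emph{non-atomic}, because by tail invariance the cylinder over the length-$n$ initial segment of a fixed path has $\mu$-mass at most $1/\min_{v\in V_n}|E(v_0,v)|$, which tends to~$0$ since the natural diagram of an everywhere growing sequence is everywhere growing (compare the proof of Lemma~\ref{small_base} and Lemma~\ref{small_base2}). Hence $\mu(\mathcal D)=0$. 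For $\mathcal E$ it suffices, by $T$-invariance, to show $\nu(\Lambda)=0$ for every invariant $\nu$ on~$X_{\boldsymbol\sigma}$; here I would use a Kakutani--Rohlin tower-height estimate: the sets $\sigma_{[0,n)}(X^{(n)}_{\boldsymbol\sigma})$ decrease with~$n$, and once all heights $|\sigma_{[0,n)}(a)|$ exceed~$N$ their $\nu$-mass is at most $1/N$. When $\boldsymbol\sigma$ is recognizable this last bound is immediate from Lemma~\ref{l:generating}; in general one must additionally verify that the base of the level-$n$ tower and its shifts are suitably disjoint (and note that $\mathcal E$ consists only of shift orbits of limit words landing in this base), which is the point where the everywhere growing hypothesis is genuinely needed. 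This yields the almost-factor statement.

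Finally, assume $\boldsymbol\sigma$ is recognizable. By Lemma~\ref{l:generating} each cover~$\mathcal{P}_n$ is then a \emph{partition}, so each $y\in X_{\boldsymbol\sigma}$ has a \emph{unique} $(\mathcal{P}_n)$-address; consequently two distinct paths in $X_B\setminus\mathcal D$ produce distinct addresses and hence, again by Lemma~\ref{l:generating}, distinct points, so $\Phi$ is injective on $X_B\setminus\mathcal D$. Thus $\Phi\colon X_B\setminus\mathcal D\to X_{\boldsymbol\sigma}\setminus\mathcal E$ is a continuous bijection commuting with the dynamics; its inverse sends~$y$ to its unique address path and is continuous because the address cells are clopen, so $\Phi$ is an almost-conjugacy. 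I expect the main obstacle to lie in the null-set analysis of the third paragraph — precisely, showing that the complement of the image of~$\Phi$ is carried by the negligible shift orbits of the limit words in the non-recognizable case — whereas the address--path dictionary and the verification of $\Phi\circ\varphi_\omega=T\circ\Phi$, though technically fussy, are routine.
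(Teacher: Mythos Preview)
Your construction of the map $\Phi$ via $(\mathcal{P}_n)$-addresses, the identification of the excluded sets on both sides, the equivariance check, and the injectivity argument under recognizability all match the paper's proof (modulo swapping the names $\mathcal{D}$ and~$\mathcal{E}$). There are, however, two points where your null-set analysis diverges from the paper's and leaves a gap.

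First, your claim that $\mathcal{D}$ is countable because ``for each~$N$ a path minimal (or maximal) from level~$N$ on is determined by its initial segment of length~$N$'' is false. Knowing $(x_0,\dots,x_{N-1})$ fixes $s(x_N)=r(x_{N-1})$, but not $r(x_N)$: several vertices in $V_{N+1}$ may receive an edge from $s(x_N)$, each with its own maximal edge, so there are in general many maximal continuations. With unbounded alphabets even $X_{\max}(\omega)$ can be uncountable. The paper does not attempt countability: it uses Lemma~\ref{small_base} to get $\mu(X_{\max}(\omega))=\mu(X_{\min}(\omega))=0$, and then observes that your $\mathcal{D}$ is the countable union $\bigcup_{k\ge0}\big(\varphi_\omega^{-k}(X_{\max}(\omega))\cup\varphi_\omega^{k}(X_{\min}(\omega))\big)$ of null sets, using that $\varphi_\omega$ preserves any tail-invariant~$\mu$.

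Second, for $\nu(\mathcal{E})=0$ in the non-recognizable case you try a tower-height bound $\nu(\sigma_{[0,n)}(X_{\boldsymbol\sigma}^{(n)}))\le 1/N$ and acknowledge that it requires the translates of the base to be ``suitably disjoint''. Without recognizability the covers~$\mathcal{P}_n$ are not partitions, the translates can overlap heavily, and there is no way to salvage this estimate. The paper instead argues as follows: by the everywhere-growing hypothesis (as in the proof of Lemma~\ref{l:limitwords}) the set of limit words is countable, so $\mathcal{E}$ is contained in countably many shift orbits; any $T$-invariant probability can have an atom only at a periodic point; and a periodic limit word actually lies in $\Phi(X_B\setminus\mathcal{D})$, since once the tower heights exceed its period one can choose a compatible address with both $k_n$ and $|\sigma_{[0,n)}(a_n)|-k_n$ unbounded. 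Replacing your tower estimate by this countability-plus-periodicity argument closes the gap.
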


\begin{remark}
If $\boldsymbol{\sigma}$ is everywhere growing and recognizable, then $(X_{\boldsymbol{\sigma}},T)$ is aperiodic. 
For, if $x\in X_{\boldsymbol{\sigma}}$ is periodic, choose $n$ such that for each letter $a\in A_n$,  $|\sigma_{[0,n)}(a)|$ is strictly larger than the period of~$x$. 
Then $x$ has at least two centered $\sigma_{[0,n)}$-representations.
\end{remark}

\begin{proof}
As $\boldsymbol{\sigma}$ is everywhere growing, $B$~is also.   
Let $\mathcal{E}$ denote the set of all paths in~$X_B$ that are tail equivalent to a minimal or maximal path. 
In other words, $\mathcal{E}$~is the union of the set of backward $\varphi_\omega$-orbits of maximal paths and the set of forward $\varphi_\omega$-orbits of minimal paths.  
By Lemma~\ref{small_base}, $\mathcal{E}$~has zero $\mu$-mass for any probability measure~$\mu$ on~$X_B$ that is invariant under the tail equivalence relation.

Define a map $\Psi:\, X_B \setminus \mathcal{E} \to X_{\boldsymbol{\sigma}}$ in the following way.
For a path $x = (x_0,x_1,\dots) \in X_B \setminus \mathcal{E}$ with, for $n \ge 1$, edges $x_n$ labelled by~$k_{n-1}$, vertices $r(x_n)$ labelled by $a_n \in \mathcal{A}_n$, words $p_{n-1} \in \mathcal{A}_{n-1}^{k_{n-1}}$ that are prefixes of $|\sigma_{n-1}(a_n)|$, we have that $p_{n-1} a_{n-1}$ is a prefix of $\sigma_{n-1}(a_n)$, hence
\[
q_n = \sigma_{[0,n-1)}(p_{n-1}) \sigma_{[0,n-2)}(p_{n-2}) \cdots \sigma_0(p_1) p_0
\]
is a prefix of $\sigma_{[0,n)}(a_n)$.
Let $y \in \mathcal{A}_0^\mathbb{Z}$ be such that $T^{-|q_n|}(y)$ starts with $\sigma_{[0,n)}(a_n)$ for all $n \ge 1$. 
As $k_{n-1} > 0$ for infinitely many $n \ge 1$ and $k_{n-1} < |\sigma_{n-1}(a_n)|$ for infinitely many $n \ge 1$, we have that $\lim_{n\to\infty} |q_n| = \infty$ and $\lim_{n\to\infty} |\sigma_{[0,n)}(a_n)| - |q_n| = \infty$.
Hence, $y$ is unique, and we set $\Psi(x) = y$.
Then $\Psi$ is continuous.
Let $\mathcal{D} = X_{\boldsymbol{\sigma}} \setminus \Psi(X_B \setminus \mathcal{E})$.

Let $\nu$ be a probability  measure on $X_{\sigma}$ which is  invariant under $T$. To prove that $(X_{\boldsymbol{\sigma}},T)$ is an almost-factor of $(X_B, \varphi_\omega)$, it remains to show that $\nu(\mathcal{D}) = 0$ for any $T$-invariant probability measure~$\nu$ on~$X_{\boldsymbol{\sigma}}$.
Similarly to Lemma~\ref{l:generating}, we obtain that each element of~$\mathcal{D}$ is in the shift orbit of a limit word of~$\boldsymbol{\sigma}$.
Similarly to Lemma~\ref{l:limitwords}, the property that $\boldsymbol{\sigma}$ is everywhere growing implies that the set of limit words of~$\boldsymbol{\sigma}$ is countable, thus $\mathcal{D}$ is countable.  
If $\nu(\{x\}) > 0$ for some $x \in X_{\boldsymbol{\sigma}}$, then $x$ is a periodic point.
As $\boldsymbol{\sigma}$ is everywhere growing, this implies that $x \in \Psi(X_B \setminus \mathcal{E})$.
Therefore, we have $\nu(\mathcal{D}) = 0$.

If $\boldsymbol{\sigma}$ is recognizable, it is straightforward to show that $\Psi$ is injective.
\end{proof}

\subsection{Recognizability and rank}

We  pause to discuss briefly the relation of recognizability to the multiple notions of \emph{rank}. 
The notion of rank has its roots in measurable transformations of the unit interval. 
One definition is that a transformation $T:\, [0,1]\rightarrow [0,1]$, which is measure preserving with respect to the Lebesgue measure~$\mu$, has \emph{(measurable) rank~$k$} if there is a sequence of measurable partitions $(\mathcal{P}_n)$ of $[0,1]$, where for each~$n$, $\mathcal{P}_n = \{T_1, \ldots ,T_k, S_n\}$,  with each $T_k$ a ``tower", with $S_n$ a  small ``spacer set", $\mu(S_n) \to 0$, and such that the sequence $(\mathcal{P}_n)$ generates the Borel $\sigma$-algebra. 
This comes naturally out of the ``cutting and stacking" definition of finite rank transformations. Here, if we interpret the notion of recognizability as the existence of a generating sequence of partitions, then recognizability is built in.
See \cite{Ferenczi:96,Fer97} for several possible definitions of rank~$k$ systems. 
Measurable Bratteli-Vershik systems often have a Lebesgue model as a cutting and stacking transformation, for example, most recently in \cite{AFP:2016}.
There is also a notion of topological rank. 
The \emph{rank} of a Bratteli diagram is the minimum~$k$ such that $|V_n|=k$ infinitely often. 
In \cite{Down:2008}, the authors define the (topological) \emph{rank} of a Cantor dynamical system $(X,S)$ to be the minimal~$k$ where $(X,S)$ is conjugate to a topological Bratteli-Vershik system $(X_B,\varphi_\omega)$, and where $B$ is of rank~$k$. Apart from the topological context, this notion of rank is close to that of measurable rank, for it also has a built in recognizability, in the form of a generating sequence of topological partitions of~$X_B$.

It also makes sense to talk about the rank of a recognizable $S$-adic shift. 
The \emph{rank} of a directive sequence~$\boldsymbol{\sigma}$ is the minimum~$k$ such that $|\mathcal A_n|=k$ infinitely often. Let  
$(X_{\boldsymbol{\sigma}}, T)$ be a recognizable, everywhere growing $S$-adic shift, with
$\boldsymbol{\sigma}$ of  rank $k$.
Then by Lemma~\ref{l:generating}, for any probability measure~$\nu$ such that $(X_{\boldsymbol{\sigma}}, T,\nu)$ is measure preserving, we have a sequence of partitions $(\mathcal{P}_n)$, each consisting of $k$ towers such that $(\mathcal{P}_n)$ generates the Borel $\sigma$-algebra defined by~$\nu$.

\begin{corollary}
Let $(X_{\boldsymbol{\sigma}}, T)$ be a recognizable, everywhere growing $S$-adic shift, with
$\boldsymbol{\sigma}$ of rank~$k$.  
Let $\nu$ be any probability measure such that $(X_{\boldsymbol{\sigma}}, T,\nu)$ is measure preserving. 
Then $(X_{\boldsymbol{\sigma}}, T,\nu)$ is measurably conjugate to a cutting and stacking measurable transformation of a Lebesgue space which is of measurable rank at most~$k$.
\end{corollary}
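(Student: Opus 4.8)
The plan is to turn the generating sequence of partitions supplied by Lemma~\ref{l:generating} into an explicit cutting-and-stacking model. Since $\boldsymbol{\sigma}$ is recognizable and everywhere growing and $(X_{\boldsymbol{\sigma}},T,\nu)$ is measure preserving, Lemma~\ref{l:generating} tells us that the covers $\mathcal{P}_n$ of~\eqref{eq:partisadic} are genuine clopen partitions of $X_{\boldsymbol{\sigma}}$ and that the sequence $(\mathcal{P}_n)_{n\ge0}$ is generating in measure; moreover $(X_{\boldsymbol{\sigma}},T)$ is aperiodic (see the remark following Theorem~\ref{thm:Bratteli-Vershik}), so $\nu$ is non-atomic and $(X_{\boldsymbol{\sigma}},\nu)$ is a non-atomic Lebesgue space. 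For each $n$ I would group $\mathcal{P}_n$ into the $|\mathcal{A}_n|$ Rohlin towers $\tau_n(a)=\{T^j\sigma_{[0,n)}([a]):0\le j<|\sigma_{[0,n)}(a)|\}$, $a\in\mathcal{A}_n$; since $\boldsymbol{\sigma}$ is everywhere growing the minimal height of these towers tends to~$\infty$. The key structural fact, which I would read off from $\sigma_{[0,n+1)}=\sigma_{[0,n)}\circ\sigma_n$ together with recognizability at level~$n$, is that $\mathcal{P}_{n+1}$ refines $\mathcal{P}_n$ and that the passage from $\mathcal{P}_n$ to $\mathcal{P}_{n+1}$ is precisely a cutting-and-stacking step: if $\sigma_n(b)=c_0c_1\cdots c_{m-1}$ with $c_i\in\mathcal{A}_n$, then $\tau_{n+1}(b)$ is obtained by stacking, in this order, a sub-column of $\tau_n(c_0)$, then one of $\tau_n(c_1)$, and so on up to $\tau_n(c_{m-1})$, the multiplicities being recorded by $M_{\sigma_n}$.

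To make the rank visible I would telescope. Since $\boldsymbol{\sigma}$ has rank~$k$, pick $0=n_0<n_1<n_2<\cdots$ with $|\mathcal{A}_{n_j}|=k$ for all $j\ge1$ and pass to the telescoping $\boldsymbol{\sigma}'=(\sigma_{[n_j,n_{j+1})})_{j\ge0}$ of Section~\ref{sec:telesc}. By that section and Lemma~\ref{lem:tele}, $\boldsymbol{\sigma}'$ generates the same shift $X_{\boldsymbol{\sigma}}$, is still recognizable and everywhere growing, and its partition at level~$j$ is exactly $\mathcal{P}_{n_j}$; since a subsequence of a generating sequence (and a fortiori the tail $(\mathcal{P}_{n_j})_{j\ge1}$) is still generating in measure, we may assume from now on that $|\mathcal{A}_n|=k$, i.e. that $\mathcal{P}_n$ consists of exactly~$k$ towers, for every $n\ge1$.

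Finally I would build the model on $[0,1]$. Assign to the $j$-th level of each tower $\tau_n(a)$ a subinterval of length $\nu(T^j\sigma_{[0,n)}([a]))$; because $\mathcal{P}_n$ partitions $X_{\boldsymbol{\sigma}}$ these intervals tile $[0,1]$, and the restriction of $T$ to the complement of the tops of $\mathcal{P}_n$ becomes a piecewise translation. The stacking rules of the first paragraph are then exactly a sequence of cutting-and-stacking instructions in which, since no mass is ever left over, all spacer sets may be taken empty (so $\mu(S_n)\to 0$ trivially); these instructions determine a single Lebesgue-measure-preserving transformation $S$ of $[0,1]$. The address map $\Psi\colon X_{\boldsymbol{\sigma}}\to[0,1]$ sending $x$ to the unique point whose position in every column of $\mathcal{P}_n$ matches the $(\mathcal{P}_n)$-address of~$x$ is measure preserving and satisfies $\Psi\circ T=S\circ\Psi$ by construction; it is injective $\nu$-almost everywhere because $(\mathcal{P}_n)$ is generating in measure, and $\mathrm{Leb}$-almost surjective by construction, hence a measurable conjugacy. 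As each $\mathcal{P}_n$ ($n\ge1$) consists of $k$ towers and they generate, $S$ has measurable rank at most~$k$. The only genuinely delicate point is the bookkeeping of this last step — verifying that the inverse-limit/address construction is a bona fide measure isomorphism onto the cutting-and-stacking space — but, Lemma~\ref{l:generating} being granted, this is the classical dictionary between generating Kakutani--Rohlin partitions and cutting-and-stacking models and requires no new idea.
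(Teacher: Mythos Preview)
Your proposal is correct and follows the same route the paper intends: the corollary is stated immediately after the observation that Lemma~\ref{l:generating} yields a generating sequence of Kakutani--Rohlin partitions consisting of~$k$ towers, and the paper offers no further argument. Your write-up simply makes explicit the two steps the paper leaves implicit---telescoping along the levels with $|\mathcal{A}_{n_j}|=k$ so that every partition has exactly~$k$ towers (this is where the refining property $\mathcal{P}_{n+1}\preceq\mathcal{P}_n$ is used to guarantee the subsequence still generates), and then invoking the classical dictionary between a refining, generating sequence of Kakutani--Rohlin partitions and a cutting-and-stacking model on~$[0,1]$---so there is no substantive difference in approach.
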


\begin{example}
The infimax $S$-adic family $\mathcal{F} = \{\theta_k:\, k\geq 1\}$, with $\theta_k:\, \{1,2,3\} \to \{1,2,3\}^+$, $\theta_k(1)=2$, $\theta_k(2)=31^{k+1}$, $\theta_k(3)=31^k$, is considered in \cite{Boyland:2016}, where Boyland and Severa consider $S$-adic shifts $(X_{\boldsymbol{\sigma} },T)$ defined by $\boldsymbol{\sigma} = (\sigma_n)_{n\geq 1}$ with each $\sigma_n\in \mathcal{F}$.
The authors show that there exist geometric realizations, via homeomorphisms, of such  $S$-adic shifts as interval translation maps. 
To do this, they modify the methods developed by Canterini and Siegel \cite{CS01b,CanSie:2001}, which involves using the ordered Bratteli diagram defined by~$\boldsymbol{\sigma}$, to  obtain a map from the $S$-adic shift to a generalized Rauzy fractal, such that the shift on~$X_{\boldsymbol{\sigma}}$ corresponds to an interval translation map for which the fractal is the attractor. 
It can easily be checked, either by hand or, since each~$M_{\theta_k}$ has rank~3, by applying Theorem~\ref{c:rec}, that any directive sequence chosen from~$\mathcal{F}$ yields a recognizable $S$-adic shift. 
Using Theorem~\ref{thm:Bratteli-Vershik}, we conclude that such a shift is almost conjugate to the natural (measurable) Bratteli-Vershik shift that it defines.
\end{example}

\begin{example}[Return words and tree shifts]
Let $X \subset \mathcal{A}^\mathbb{Z}$ be a minimal shift with language $\mathcal{L}_X$. 
Let $w \in \mathcal{L}_X$. 
A~(left) \emph{return  word} to~$w$ is a word $v \in \mathcal{L}_X$ that has $w$ as a prefix, such that $vw \in \mathcal{L}_X$ and such that $vw$ contains exactly two occurrences of~$w$. 
The set $\mathcal{R}_X(w)$ of return words to~$w$ is finite by minimality. 
A~\emph{coding morphism}  for $\mathcal{R}_X(w)$ is a morphism $\tau:\, \mathcal{B}^* \to  \mathcal{A}^*$ which maps the finite alphabet~$\mathcal{B}$ bijectively onto $\mathcal{R}_X(w)$. 

Let $\Gamma_X(w) = \{v \in  \mathcal{L}_X:\,  vw \in  \mathcal{L}_X \cap w \mathcal{A}^+\}$; then $\mathcal{R}_X(w) \subset  \Gamma_X(w)$. 
Elements $v$ of $\Gamma_X(w)$ have $w$ as a prefix but $vw$ might contain more than two occurrences of~$w$.
The shift with language $(\tau^{-1} \Gamma_X(w) ) \cup \{ \varepsilon\}$ is called the \emph{derived language} of~$X$ with respect to~$w$ and to~$\tau$ (here $\varepsilon$
stands for the empty word), and it is also a minimal shift. 
Let $x \in X$, let $\tau$ be a coding morphism with respect to the subword $x_{-1} x_0$, and 
let $y$ be the two-sided sequence defined by $S^{-1} x  = \tau(y)$. 
The point~$y$ belongs to~$Y$, the derived shift of~$X$ with respect to $w = x_{-1} x_0$ and~$\tau$.
We call the point~$y$ a \emph{derived point} of~$x$.
By iterating the process, one thus gets an $S$-adic representation of~$X$ obtained by return words, and expressed in terms of derived shifts; see e.g.\ \cite{Durand:00b} or  \cite{Durand-Leroy}. 
We describe the inductive procedure to construct this representation of $(X,T)$ as an S-adic shift. 
Fix $x=x^{(0)}\in X$, let $X^{(0)}=X$, let $\sigma_0=\tau$ be the coding morphism with respect to $x_{-1}^{(0)} x_0^{(0)}$, and let $X^{(1)}$ be the shift whose language is the derived language of~$X^{(0)}$ with respect to $x_{-1}^{(0)} x_0^{(0)}$. 
Now fix the $x^{(1)}\in X^{(1)}$ which satisfies $S^{-1} x^{(0)}  = \tau (x^{(1)})$, and let $\sigma_1$ be a coding morphism with respect to the subword $x_{-1}^{(1)}x_0^{(1)}$. 
Let $X^{(2)}$ be the shift whose language is the derived language of~$X^{(1)}$ with respect to $x_{-1}^{(1)}x_0^{(1)}$. 
Continuing inductively, we build a sequence of morphisms $\boldsymbol{\sigma} =(\sigma_n)_{n\geq 0}$ such that $\sigma_n$ is a coding morphism with respect to the subword $x_{-1}^{(n)}x_0^{(n)}\in X^{(n)}$,  and such that $X^{(n+1)}$ is the shift whose language is the derived language of~$X^{(n)}$ with respect to $x_{-1}^{(n)}x_0^{(n)}$ and~$\sigma_n$. 
Then $(X^{(n)})_{n\geq 0}=  (X_{\boldsymbol{\sigma}}^{(n)})_{n\geq 0}$ and $X= X_{\boldsymbol{\sigma}}^{(0)}$. 
Let us call this $S$-adic representation of~$X$ a \emph{return word $S$-adic representation of~$X$}.

A~priori, morphisms involved in such an $S$-adic representation are not substitutions: the  cardinality of $\mathcal{R}_X(w)$ is not necessarily equal to the cardinality of~$\mathcal{A}$.
We now consider a family of shifts~$X$, namely tree shifts, that have the striking property that  the sets of return words all have the same cardinality for every subword $v \in \mathcal{L}_X$ (they even generate the free group).
The family of tree shifts \cite{BerDeFel} includes Arnoux-Rauzy words and interval exchanges.
For a word $w \in \mathcal{L}_X$, we consider the undirected bipartite graph $G(w)$ called its \emph{extension graph} in $\mathcal{L}_X$ which is defined as follows.
Its set of vertices is the disjoint union of $\{a \in \mathcal{A}:\, aw \in  \mathcal{L}_X\}$ and $\{a \in \mathcal{A}:\, wa \in  \mathcal{L}_X\}$, and its edges are the pairs $(a,b)$ such that $awb \in \mathcal{L}_X$. 
We say that  the minimal shift~$X$ is a \emph{tree shift} if for every word~$w$, the graph $G(w)$ is a tree.

Tree shifts are stable under decoding by return words. 
Indeed, according to \cite[Theorem~5.13]{BerDeFel}, any derived shift of a minimal tree shift is a minimal tree shift defined on an alphabet of the same cardinality. 
Furthermore, the coding morphisms are free group automorphisms, hence their incidence matrices have full rank. 
In other words, by Theorem~\ref{c:rec}, if $(X_{\boldsymbol{\sigma}},T)$ is a return word $S$-adic representation of $(X,T)$, then it is a recognizable $S$-adic representation $(X_{\boldsymbol{\sigma}},T)=(X,T)$.  
The entire statement of Lemma~\ref{l:generating} holds. 
For, the fact that the morphisms in~$\boldsymbol{\sigma}$ are all defined on alphabets of the same cardinality implies that the set of limit words $\bigcap_{n=0}^{\infty} \sigma_{[0,n)}(X_{\boldsymbol{\sigma}}^{(n)})$ is finite. 
Let $(B,\omega)$ be the ordered Bratteli diagram associated with~$\boldsymbol{\sigma}$. Then $\mathcal{E}$, the set of all paths in~$X_B$ that are tail equivalent to a minimal or maximal path, is countable and so we can apply Theorem~\ref{thm:Bratteli-Vershik} to conclude that our tree shift $(X,T)$ is almost conjugate to the Bratteli-Vershik system $(X_B,\varphi_\omega)$. 

But in this case we can say more. 
Namely, we make a slight modification of the coding morphisms $(\sigma_n)_{n\geq 0}$ to render them proper. 
Sequences of proper morphisms are often useful in getting a topological Bratteli-Vershik representation, see e.g.\ \cite{Durand-Host-Skau,Durand-Leroy:2012}.
(However the fact that the sequence is proper is not a sufficient condition for a topological Bratteli-Vershik representation, cf.\ Example~\ref{ex:ws}.)
We describe the modification for a single coding morphism~$\tau$, which must then be applied to each~$\sigma_n$.
Consider the finite set $x_{-1}^{-1} \mathcal{R}_X(x_{-1} x_0)x_{-1}$.  
Any element~$v$ of this set has~$x_0$ as a prefix and $x_{-1}$ as a suffix. 
One checks that the coding morphism $\tau'$ for $ x_{-1} ^{-1} \mathcal{R}_X(x_{-1} x_0)x_{-1}$
is a proper substitution. 
It has also full rank since it has the same incidence matrix as~$\tau$. 
Hence the new sequence of coding morphisms $(\sigma_n)_{n\geq 0}$ is a sequence of proper morphisms, and the natural Bratteli-Vershik system associated with $\boldsymbol{\sigma}$ is properly ordered. 
\end{example}

To conclude, we have proved the following result that holds in particular for shifts  provided by  Arnoux-Rauzy  words or by codings of minimal interval exchanges.

\begin{theorem}\label{tree-topological-BV-rep}
Let $(X,T)$ be a minimal tree shift. 
Let $(X_{\boldsymbol{\sigma}},T)$ be a return word $S$-adic representation of $(X,T)$. 
Then $(X_B,\varphi_\omega)$, the natural Bratteli-Vershik system associated with~$\boldsymbol{\sigma}$, is properly ordered and is topologically conjugate to $(X,T)$. 
Its topological rank is bounded by the size of the alphabet of~$X$.
\end{theorem}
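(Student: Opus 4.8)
The plan is to assemble the statement from three pieces that have already been developed in the paper: (i) the combinatorial fact that every return word $S$-adic representation of a minimal tree shift can be taken to consist of \emph{proper} full-rank morphisms, which makes the associated order $\omega$ proper; (ii) Theorem~\ref{c:rec}, which gives recognizability of such a sequence; and (iii) Theorem~\ref{thm:Bratteli-Vershik}, upgraded from almost-conjugacy to topological conjugacy using properness. First I would recall that, by \cite[Theorem~5.13]{BerDeFel}, every derived shift of a minimal tree shift is again a minimal tree shift on an alphabet of the \emph{same} cardinality, and the coding morphisms are free group automorphisms, hence have incidence matrices of full rank. Replacing each coding morphism $\tau$ (for $\mathcal{R}_X(x_{-1}x_0)$) by the coding morphism $\tau'$ for the conjugated set $x_{-1}^{-1}\mathcal{R}_X(x_{-1}x_0)x_{-1}$ yields a proper morphism with the same incidence matrix, hence still full rank; doing this at every level produces a directive sequence $\boldsymbol{\sigma}=(\sigma_n)_{n\ge0}$ of proper full-rank morphisms with $X_{\boldsymbol{\sigma}}=X$ and $X_{\boldsymbol{\sigma}}^{(n)}=X^{(n)}$. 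Since $\boldsymbol{\sigma}$ is primitive (each $X^{(n)}$ is minimal) and everywhere growing — the heights of the towers in $\mathcal{P}_n$ tend to infinity because consecutive returns strictly lengthen — and since each $\mathrm{rk}(M_{\sigma_n})=|\mathcal{A}_{n+1}|$, Theorem~\ref{c:rec} gives that $\boldsymbol{\sigma}$ is recognizable (for aperiodic points; and $X$ is aperiodic as a nontrivial tree shift).

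Next I would pass to the natural ordered Bratteli diagram $(B,\omega)$ associated with $\boldsymbol{\sigma}$ as in the construction preceding Lemma~\ref{small_base2}. Because infinitely many (in fact all) of the $\sigma_n$ are proper, the order $\omega$ is proper: there are unique letters $b_n,e_n$ beginning and ending every $\sigma_n(a)$, so the minimal path and the maximal path through each level are uniquely determined, whence $X_{\min}(\omega)$ and $X_{\max}(\omega)$ are singletons. Consequently the Vershik map $\varphi_\omega$ extends to a homeomorphism of $X_B$, and $(X_B,\varphi_\omega)$ is a genuine topological (minimal Cantor) dynamical system. Now invoke Theorem~\ref{thm:Bratteli-Vershik}: since $\boldsymbol{\sigma}$ is everywhere growing and recognizable, the map $\Psi:\,X_B\setminus\mathcal{E}\to X_{\boldsymbol{\sigma}}$ constructed there is a continuous injection conjugating $\varphi_\omega$ to $T$, and its complement $\mathcal{D}=X_{\boldsymbol{\sigma}}\setminus\Psi(X_B\setminus\mathcal{E})$ lies in the shift orbit of the limit words of $\boldsymbol{\sigma}$; as the $\sigma_n$ are all on alphabets of the same (finite) size, the set of limit words $\bigcap_{n}\sigma_{[0,n)}(X_{\boldsymbol{\sigma}}^{(n)})$ is finite, and likewise $\mathcal{E}$ is countable. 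To turn the almost-conjugacy into a topological conjugacy I would argue as follows: $\mathcal{E}$ consists exactly of the forward $\varphi_\omega$-orbit of the unique minimal path and the backward orbit of the unique maximal path; by minimality of $X$ these two orbits map under $\Psi$ to two $T$-orbits in $X$, and the missing points of $\mathcal{D}$ are precisely the limit words and their shifts. One then checks that $\Psi$ already extends continuously across these countably many points because on the minimal (maximal) path the address has $k_n=0$ (resp.\ $k_n=|\sigma_n(a_n)|-1$) eventually, which still pins down a unique two-sided sequence once we specify — via the proper letters $b_n$, $e_n$ and minimality — the side that the naive address forgets; equivalently, one shows $\Psi$ is a continuous bijection on all of $X_B$ by verifying that distinct minimal/maximal-type paths have distinct images and that every limit word and its shifts are hit. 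Hence $\Psi:\,(X_B,\varphi_\omega)\to(X,T)$ is a homeomorphic conjugacy.

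Finally, for the rank bound: the $n$-th level $V_{n+1}$ of $B$ is a copy of $\mathcal{A}_n$, and all $\mathcal{A}_n$ have the same cardinality $|\mathcal{A}|$ (that of $X$), so the Bratteli diagram has rank $|\mathcal{A}|$, and by definition the topological rank of $(X,T)$ is at most $|\mathcal{A}|$. I expect the main obstacle to be the last step of the second paragraph: promoting the almost-conjugacy $\Psi$ of Theorem~\ref{thm:Bratteli-Vershik} to an everywhere-defined homeomorphism, i.e.\ showing that no two maximal/minimal-type paths collapse and that every point of $X$ (in particular every shift of a limit word) is in the image. This is exactly the point where properness is essential — it is what makes $X_{\min}(\omega)$ and $X_{\max}(\omega)$ singletons and what lets the "forgotten past" of a limit word be reconstructed — and it is the reason the theorem fails, for instance, for the eventually-recognizable-but-not-recognizable sequence of Example~\ref{ex:ws}, even though that sequence is also eventually proper. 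I would handle it by the standard Durand--Host--Skau-type argument: a proper, primitive, recognizable, everywhere growing directive sequence yields a nested sequence of Kakutani--Rohlin partitions $(\mathcal{P}_n)$ (from \eqref{eq:partisadic} and Lemma~\ref{l:generating}) whose bases shrink to a single point and which generate the topology, and such a sequence is well known to force the Vershik system and the shift to be topologically conjugate.
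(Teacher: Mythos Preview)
Your proposal is correct and follows essentially the same route as the paper: invoke \cite[Theorem~5.13]{BerDeFel} for constant alphabet size and full-rank coding morphisms, replace each coding morphism by the proper one for $x_{-1}^{-1}\mathcal{R}_X(x_{-1}x_0)x_{-1}$, apply Theorem~\ref{c:rec} for recognizability, observe that properness makes $(B,\omega)$ properly ordered, and read off the rank bound from $|\mathcal{A}_n|=|\mathcal{A}|$. You are in fact more explicit than the paper on the one step it leaves to the reader, namely upgrading the almost-conjugacy of Theorem~\ref{thm:Bratteli-Vershik} to a topological conjugacy; your sketch via a Durand--Host--Skau-type argument (nested clopen Kakutani--Rohlin partitions from \eqref{eq:partisadic} and Lemma~\ref{l:generating}, bases shrinking to a point by properness, generating the topology by recognizability) is the right way to fill that gap.
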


 \subsection{Consequences of eventual recognizability}
We end by returning to a question raised by Theorem~\ref{thm:Bratteli-Vershik}.
If $\boldsymbol{\sigma}$ is recognizable from level~$n$ on, and the number of centered $\sigma_{[0,n)}$-representations of $x \in X_{\boldsymbol{\sigma}}$ in $X_{\boldsymbol{\sigma}}^{(n)}$ is bounded, then the almost-factor map of Theorem \ref{thm:Bratteli-Vershik}
 is bounded-to-one. 
Indeed, if $\boldsymbol{\sigma}$ is recognizable from level~$n$ on, then, using the notation of Theorem \ref{thm:Bratteli-Vershik}, for any $x \in X_{\boldsymbol{\sigma}} \setminus \mathcal{E}$, $|\Psi^{-1}(\Psi(x))|$ is bounded by the number of centered $\sigma_{[0,n)}$-representations of $\Psi(x)$ in~$X_{\boldsymbol{\sigma}}^{(n)}$.
For an everywhere growing directive sequence~$\boldsymbol{\sigma}$ with alphabets of bounded size, we have recognizability for aperiodic points from some level~$n$ on by Theorem~\ref{t:evrec-1}.
If $X_{\boldsymbol{\sigma}}$ is aperiodic and the number of $\sigma_{[0,n)}$-representations of $x \in X_{\boldsymbol{\sigma}}$ in~$X_{\boldsymbol{\sigma}}^{(n)}$ is bounded, this would imply that $(X_{\boldsymbol{\sigma}},T)$ is a bounded-to-one almost-factor of $(X_B, \varphi_\omega)$. 

\begin{conjecture}
Let $\boldsymbol{\sigma}$ be an everywhere growing eventually recognizable directive sequence,  such that  the $S$-adic  system $(X_{\boldsymbol{\sigma}}, T)$ is an aperiodic shift. 
Let $(X_B, \varphi_\omega)$  be the natural Bratteli-Vershik dynamical system associated with~$\boldsymbol{\sigma}$. 
Is $(X_{\boldsymbol{\sigma}},T)$ is a bounded-to-one almost-factor of $(X_B, \varphi_\omega)$?
\end{conjecture}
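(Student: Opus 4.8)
The plan is to confirm the implication sketched in the paragraph preceding the statement, reducing the bounded-to-one property of the almost-factor map $\Psi$ from Theorem~\ref{thm:Bratteli-Vershik} to a uniform bound on the number of centered desubstitutions at the first, possibly non-recognizable, levels. First I would fix, using Theorem~\ref{t:evrec-1}, a level $n_0$ from which $\boldsymbol{\sigma}$ is recognizable for aperiodic points, and telescope so that $\Sigma := \sigma_{[0,n_0)}$ becomes a single morphism on the bounded alphabet $\mathcal{A}_{n_0}$. Recognizability from level $n_0$ on makes every desubstitution above level $n_0$ unique, so for $x \in X_B \setminus \mathcal{E}$ the fibre $\Psi^{-1}(\Psi(x))$ is in bijection with the set of centered $\Sigma$-representations of $y := \Psi(x)$ in $X_{\boldsymbol{\sigma}}^{(n_0)}$. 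Thus the whole question reduces to proving that there is a constant $M$, depending only on the alphabet sizes and on $n_0$, such that every aperiodic $y \in X_{\boldsymbol{\sigma}}$ has at most $M$ centered $\Sigma$-representations in $X_{\boldsymbol{\sigma}}^{(n_0)}$. A useful preliminary observation is that $X_{\boldsymbol{\sigma}}^{(n_0)}$ is itself aperiodic: a periodic $z \in X_{\boldsymbol{\sigma}}^{(n_0)}$ would have periodic image $\Sigma(z)$ lying in $X_{\boldsymbol{\sigma}}$, contradicting the aperiodicity hypothesis.

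The bound splits naturally into two parts according to the synchronisation of cutting points. For representations that pairwise have no common $\Sigma$-cut, the Infection Lemma (Lemma~\ref{l:infection}) already gives, for aperiodic $y$, a uniform bound of $2^{K-1}(K-1)+1$, where $K$ bounds $|\mathcal{A}_{n_0}|$. The genuinely delicate part is to bound the number of representations that do share cutting points; these correspond exactly to distinct preimages $z, z' \in X_{\boldsymbol{\sigma}}^{(n_0)}$ of the same point with $\Sigma(z) = \Sigma(z')$ and identical cut sets, that is, to the fibres of $\Sigma$ restricted to $X_{\boldsymbol{\sigma}}^{(n_0)}$. Here I would factor $\Sigma = \tilde\sigma\,\tau$ with $\tilde\sigma$ injective on $\tau(X_{\boldsymbol{\sigma}}^{(n_0)})$, using Lemma~\ref{l:reducealphabet} iterated as in the proof of Lemma~\ref{lem:common_cuts}; by Lemma~\ref{l:injectivecut} the $\tilde\sigma$-layer then contributes only representations with no common cut, again controlled by the Infection Lemma, while all the remaining multiplicity is concentrated in the fibres of the alphabet-reducing ``gluing'' morphism $\tau$ on $X_{\boldsymbol{\sigma}}^{(n_0)}$.

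The main obstacle --- and the reason the statement is posed as a question rather than a theorem --- is precisely to bound these $\tau$-fibres uniformly. Unlike the recognizable levels $m \geq n_0$, where recognizability forces each $\sigma_m$ to be injective on $X_{\boldsymbol{\sigma}}^{(m+1)}$ and hence rigid, the initial levels $0, \dots, n_0-1$ may glue distinct letters, so that $\Sigma(a) = \Sigma(b)$ for $a \neq b$ is possible; if both letters occur freely in $X_{\boldsymbol{\sigma}}^{(n_0)}$ one could a priori relabel unboundedly many blocks and produce arbitrarily many, even uncountably many, representations of a single aperiodic $y$. Two lines of attack seem promising. The first is to propagate each cluster of cut-synchronised relabelings through the recognizable part: distinct relabelings $z \neq z'$ desubstitute uniquely to distinct points of $X_{\boldsymbol{\sigma}}^{(N)}$ for $N > n_0$, and one would try to show, via Lemma~\ref{l:cutpropagation} and the injectivity of the $\sigma_m$ on $X_{\boldsymbol{\sigma}}^{(m+1)}$, that after sufficiently many recognizable levels these refine to $\sigma_{[0,N)}$-representations of $y$ with pairwise no common cut, whence a large cluster would contradict the Infection Lemma applied at level $N$ (the alphabet size staying bounded by $K$). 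The second is a direct periodicity argument on the aperiodic point $z \in X_{\boldsymbol{\sigma}}^{(n_0)}$: a family of relabelings sharing all cuts forces long-range repetitions of the ambiguous blocks, and one would hope to derive, through a Fine--Wilf type argument combined with the everywhere-growing hypothesis, that an unbounded family of relabelings forces $z$, and thus $\Sigma(z) = y$, to be periodic, contradicting aperiodicity. I expect the real difficulty to lie in showing that the recognizable levels actually desynchronise relabelings rather than merely preserving them; eventually-equal relabelings remain cut-synchronised at every level, so the first attack needs quantitative control on how far two distinct preimages can agree, and it is this interaction between the non-injective initial levels and the rigid recognizable tail that constitutes the crux of the problem.
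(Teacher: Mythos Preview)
This statement is not proved in the paper: despite the environment name, it is typeset as a \emph{Question} (see the preamble, where \texttt{conjecture} is declared with label ``Question''), and the paper leaves it open. The only argument the paper gives is the paragraph immediately preceding the question, which observes that if $\boldsymbol{\sigma}$ is recognizable from some level~$n$ on, then the fibre $\Psi^{-1}(\Psi(x))$ is bounded by the number of centered $\sigma_{[0,n)}$-representations of $\Psi(x)$ in~$X_{\boldsymbol{\sigma}}^{(n)}$, and that a uniform bound on this number would yield the bounded-to-one conclusion. The paper does not supply such a bound.

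Your proposal reproduces exactly this reduction and, appropriately, does not claim to close the gap. You correctly identify the crux: after factoring out the injective part via Lemma~\ref{l:reducealphabet}, one must control the fibres of the non-injective morphism $\tau$ on $X_{\boldsymbol{\sigma}}^{(n_0)}$, and nothing in the paper (nor in your two suggested attacks) rules out that these fibres are unbounded or even uncountable. Your first attack, pushing relabelings through the recognizable levels hoping they desynchronise, cannot work as stated: if $\Sigma(z) = \Sigma(z')$ with identical cut sets, then the unique $\sigma_{[n_0,N)}$-desubstitutions of $z$ and $z'$ yield $\sigma_{[0,N)}$-representations of $y$ whose cut sets are \emph{refinements} of the common $\Sigma$-cut set and hence still share all those cuts; recognizability above level~$n_0$ preserves rather than breaks this synchronisation. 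Your second attack (a Fine--Wilf argument forcing periodicity) is more plausible but is exactly the missing ingredient the paper does not provide. In short, your write-up is an accurate diagnosis of why the question is open, not a proof, and there is no proof in the paper to compare it against.
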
 

\section*{Acknowledgments}
The fourth author thanks IRIF, Universit\'e Paris Diderot--Paris 7, for its hospitality and support.

\bibliographystyle{amsalpha}
\bibliography{recog}
\end{document}